\newcommand{\s}{s}
\newcommand{\R}{\mathbb{R}}
\newcommand{\N}{\mathbb{N}}
\newcommand{\cuad}{{\sqcap\kern-.68em\sqcup}}
\newcommand{\norm}[1]{\|#1\|}
\newtheorem{definition}{Definition}[section]
\newtheorem{theorem}[definition]{Theorem}
\newtheorem{proposition}[definition]{Proposition}
\newtheorem{lemma}[definition]{Lemma}
\newtheorem{remark}[definition]{Remark}
\newcommand{\bremark}{\begin{remark} \em}
\newcommand{\eremark}{\end{remark} }
\newcommand{\cC}{{\mathcal C}}
\newcommand{\cE}{{\mathcal E}}
\newcommand{\cF}{{\mathcal F}}
\newcommand{\cG}{{\mathcal G}}
\newcommand{\cH}{{\mathcal H}}
\newcommand{\cL}{{\mathcal L}}
\newcommand{\cW}{{\mathcal W}}
\newcommand{\weak}{\rightharpoonup}
\newcommand{\eps}{\varepsilon}
\newcommand{\sOmega}{{\mbox{\tiny $\Omega$}}}
\begin{document}
\begin{center}{\bf  \large The Poisson problem for the fractional Hardy operator: Distributional identities and singular solutions}\medskip
%%%%%%%%%%%%%%%%%%%%%%%%%%%%%%%%%%%%%%%%%%%%%%%%%%%%%%%%%%%%%%%%%%%%%%
%%%%%%%%%%%%%%%%%%%%%%%%%%%%%%%%%%%%%%%%%%%%%%%%%%%%%%%%%%%%%%%%%%%%%%

\bigskip\medskip

 {\small  Huyuan Chen\footnote{chenhuyuan@yeah.net} \qquad Tobias Weth\footnote{weth@math.uni-frankurt.de}
}
 \bigskip\medskip

{\small  $^1$Department of Mathematics, Jiangxi Normal University,\\
Nanchang, Jiangxi 330022, PR China\\[3mm]

$^2$Goethe-Universit\"{a}t Frankfurt, Institut f\"{u}r Mathematik, Robert-Mayer-Str. 10,\\
D-60629 Frankfurt, Germany
 } \\[6mm]

\begin{abstract}
The purpose of this paper is to study and classify singular solutions of the Poisson problem 
$$
%\begin{equation}\label{eq 0.1}
\left \{
\begin{aligned}
\cL^\s_\mu u  = f  \quad\  {\rm in}\ \, \Omega\setminus \{0\},\\
u  =0   \quad\  {\rm in}\ \,  \R^N \setminus \Omega\ %\\
%\liminf_{x \to 0}\:|u(x)| /\Phi_\mu(x) = k.
\end{aligned}
\right.
$$
for the fractional Hardy operator $\cL_\mu^\s u= (-\Delta)^\s   u +\frac{\mu}{|x|^{2\s}}u$ in a bounded domain $\Omega \subset \R^N$ ($N \ge 2$) containing the origin. Here $(-\Delta)^\s$, $\s\in(0,1)$, is the fractional Laplacian of order $2\s$, and $\mu \ge \mu_0$, where $\mu_0 =      -2^{2\s}\frac{\Gamma^2(\frac{N+2\s}4)}{\Gamma^2(\frac{N-2\s}{4})}<0$ is the best constant in the fractional Hardy inequality. The analysis requires a thorough study of fundamental solutions and associated distributional identities. Special attention will be given to the critical case $\mu= \mu_0$ which requires more subtle estimates than the case $\mu>\mu_0$.   
\end{abstract}

\end{center}
%\tableofcontents \vspace{1mm}
  \noindent {\small {\bf Keywords}:   Hardy Potential;  Fractional Laplacian; Fundamental Solution.}
   \smallskip

   \noindent {\small {\bf AMS Subject Classifications}:     35R11;  35J75; 35B40.
  \smallskip

%\noindent {\small {\bf MSC2010}:   35R11, 35B44, 35B53, 35E05. }
\tableofcontents

\vspace{2mm}
%%%%%%%%%%%%%%%%%%%%%%%%%%%%%%%%%%%%%%%%%%%%%%%%%%%%%%%%%%%%%%%%%%%%%%%%%%%%%%%%%%%%%%%%%%%%%%%%%%%%%%%%%%%%%%%%%%%%%%%%%%
%%%%%%%%%%%%%%%%%%%%%%%%%%%%%%%%%%%%%%%%%%%%%%%%%%%%%%%%%%%%%%%%%%%%%%%%%%%%%%%%%%%%%%%%%%%%%%%%%%%%%%%%%%%%%%%%%%%%%%%%%%

\setcounter{equation}{0}
\section{Introduction}

Within recent years,  there has been an increasing interest in boundary value problems for nonlocal equations involving the fractional Laplacian and associated function spaces. This interest is motivated by various applications and relationships to the theory of partial differential equations, see
e.g. \cite{BV16,CS1,DPV,Ls07,V} and the references therein.  The fractional Laplacian is defined as
$$%%\begin{equation}
  %\label{eq:fractional-laplacian-definition}
(-\Delta)^\s  u(x)= C_{N,\s}\lim_{\epsilon\to0^+} \int_{\R^N\setminus B_\epsilon(0) }\frac{ u(x)-
u(z)}{|x-z|^{N+2\s}}  dz,
$$%\end{equation}
where $\s\in(0,1)$, $B_\epsilon(0) \subset \R^N$ is the ball of radius $\eps>0$ centered at the origin and $C_{N,\s}=2^{2\s}\pi^{-\frac N2}\s\frac{\Gamma(\frac{N+2\s}2)}{\Gamma(1-\s)}$. Here $\Gamma$ denotes the Gamma function.
For basic properties of the fractional Laplacian, we refer e.g. to \cite{DPV}. In particular, it is known  that $(-\Delta)^\s  u(x)$ is well defined if $u$ is twice continuously differentiable in a neighborhood of $x$ and contained in the space
$L^1(\R^N,\frac{dx}{1+|x|^{N+2\s}})$. Here and in the following, if a (Lebesgue-)measurable subset $\Omega \subset \R^N$, a measurable nonnegative function $V$ on $\Omega$ and $1 \le p \le \infty$ is given, we let
$L^p(\Omega,V(x)dx)$ denote the space of all measurable functions $w: \Omega \to \R$ with $\int_{\Omega}  |w(x)|^p V(x)\,dx < \infty$. We also note that, for $u \in L^1(\R^N,\frac{dx}{1+|x|^{N+2\s}})$, the fractional Laplacian $(-\Delta)^\s  u$ can also be defined as a distribution:
$$%\begin{equation}
 % \label{eq:-distributional-sense}
 [(-\Delta)^\s u]( \varphi )= \int_{\R^N} u (-\Delta)^\s \varphi \,dx \qquad \text{for $\varphi \in C^\infty_c(\Omega).$}
$$%\end{equation}
We then have
$$%\begin{equation}
 % \label{eq:-distributional-sense}
\cF((-\Delta)^\s u) = |\cdot|^{2\s} \hat u \qquad \text{on\ \ $\R^N \setminus \{0\}$}
$$%\end{equation}
in the sense of distributions, where, here and in the following, both $\cF(w)$ and $\hat w$ denote the Fourier transform of a tempered distribution $w$. The main aim of this paper is to study singular solutions of linear inhomogeneous
equations involving the fractional Hardy operator
\begin{equation}\label{mu--00}
\cL_\mu^\s u  : = (-\Delta)^\s   u +\frac{\mu}{|x|^{2\s}}u,\qquad \quad \mu \ge \mu_0 :=
     -2^{2\s}\frac{\Gamma^2(\frac{N+2\s}4)}{\Gamma^2(\frac{N-2\s}{4})}
\end{equation}
in bounded domains $\Omega \subset \R^N$   containing the origin, where $N \ge 2$,
the (negative) constant $\mu_0$ is the optimal %(i.e., largest possible)
constant in the fractional Hardy inequality (see \cite{Ya}),  which we write in the form
\begin{equation}
  \label{eq:fractional-hardy-C-infty}
\int_{\R^N}\varphi  (-\Delta)^\s \varphi \,dx + \mu_0 \int_{\R^N} \frac{\varphi^2}{|x|^{2\s}}\,dx \ge 0 \qquad \text{for all $\varphi \in C^\infty_c(\R^N)$.}
\end{equation}
We note that, in the case $s=1$, the operator $\cL_\mu^\s$ is the classical Hardy operator which, due to the critical nature of the inverse-square potential, has been studied extensively in the last decades in the context of linear and nonlinear boundary value problems, see e.g. \cite{BDT,C,Du,FF,FM, CQZ, CZ,ChVe,V1}. More recently, equations involving $\cL_\mu^\s$ in the nonlocal case $0<s<1$ and related inequalities have been receiving growing attention and are addressed e.g. in \cite{ABP,BMP,Bhakta-et-al,Robert-Ghoussoub,FLS-2008,D,DMP,W,F,GV,V}. We will discuss some of these contributions in more detail below.

In the present paper, for an arbitrary bounded $C^2$-domain $\Omega \subset \R^N$ which contains the origin, we wish to classify solutions of the problem 
 \begin{equation}\label{eq 2.1fk}
\left \{
\begin{aligned}
\cL^\s_\mu u  = f  \quad  {\rm in}\ \  \Omega \setminus \{0\}, \\
 u  =0    \quad  {\rm in}\ \   \Omega^c\qquad\ %\\
%\liminf_{x \to 0}\:|u(x)| /\Phi_\mu(x) = k.
\end{aligned}
\right.
\end{equation}
for a given function $f \in L^1_{loc}(\Omega \setminus \{0\})$.
Here and in the following, we set $\Omega^c=\R^N\setminus\Omega$. Moreover, 
by a solution we mean a function $u \in L^\infty_{loc}(\R^N \setminus \{0\}) \cap L^1(\R^N,\frac{dx}{1+|x|^{N+2\s}})$ which satisfies (\ref{eq 2.1fk}) in distributional sense, i.e., $u \equiv 0$ in $\Omega^c$ and 
$$
\int_{\R^N} u (-\Delta)^\s \varphi \,dx +\mu \int_{\R^N} \frac{u \varphi}{|x|^{2\s}}\,dx  = \int_{\Omega}f \phi\,dx   \qquad \text{for all $\varphi \in C^\infty_c(\Omega \setminus \{0\})$.}
$$
If $f \in C^\theta_{loc}(\Omega \setminus \{0\})$ for some $\theta>0$, then, by the regularity theory for the fractional Laplacian (see \cite{Ls07}),  any distributional solution $u$ of (\ref{eq 2.1fk}) is also a classical pointwise solution in $\Omega \setminus \{0\}$.

To motivate our study, let us first consider the case $\mu= 0$, in which $\cL^\s_0 = (-\Delta)^\s$. In this special case, our main result reads as follows.  
\begin{theorem}
  \label{sec:main-theorem-special-case}
Let $\Omega$ be a bounded $C^2$ domain containing the origin and  $f\in C^\theta_{loc}(\bar \Omega\setminus\{0\})$ for some $\theta \in (0,1)$.
 \begin{enumerate}
  \item[(i)] If $f\in L^1(\Omega)$, then for every $k \in \R$ there exists a unique solution 
$u_k \in L^\infty_{loc}(\R^N \setminus \{0\}) \cap L^1(\R^N,\frac{dx}{1+|x|^{N+2\s}})$ of the problem 
 \begin{equation}\label{eq 2.1fk-special-case}
\left \{
\begin{aligned}
(-\Delta)^\s u  = f  \quad  {\rm in}\ \  \Omega \setminus \{0\}, \\
 u  =0    \quad  {\rm in}\ \   \Omega^c,\qquad\,%\\
\end{aligned}
\right.
\end{equation}
satisfying the distributional identity
\begin{equation}
  \label{eq:distributional-k-special-case}
\int_{\Omega}u_k   (-\Delta)^\s \xi \,dx = \int_{\Omega}f \xi\, dx+ k \xi(0)\qquad  \text{for all $\xi \in \cC^2_0(\Omega).$}
\end{equation}
If moreover $f\in L^\infty(\Omega, |x|^\rho dx)$ for some $\rho<2\s$, then $u_k$ has the asymptotics 
 \begin{equation}\label{beh-1-special-case}
 \lim_{x \to 0} u(x) |x|^{N-2\s} = \frac{k}{c_{\s,0}} \qquad \text{with}\quad c_{\s,0}:=C_{N,\s} \omega_{_{N-1}} \int^1_0 \int_{B_{t}(0)} \frac{|z|^{2\s-N}-1}{|e_1-z|^{N+2\s}}  dzdt.
 \end{equation}
\item[(ii)] Assume that $f$ is nonnegative and $\int_{\Omega} f\, dx  =+\infty$. Then problem (\ref{eq 2.1fk-special-case}) has no nonnegative distributional solution $u \in L^\infty_{loc}(\R^N \setminus \{0\}) \cap L^1(\R^N,\frac{dx}{1+|x|^{N+2\s}})$.
\end{enumerate}
\end{theorem}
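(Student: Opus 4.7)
My strategy for part~(i) is to decompose any solution as $u_k = u_\ast + k\,\Phi_\Omega$, where $u_\ast$ absorbs the inhomogeneity $f$ and $\Phi_\Omega$ is a ``singular fundamental solution'' on $\Omega$ carrying a unit Dirac mass at the origin. I would first construct $\Phi_\Omega$ by setting $\Phi_\Omega(x) = c_\ast |x|^{2\s-N} - h_\Omega(x)$ on $\Omega$ and $\Phi_\Omega \equiv 0$ in $\Omega^c$, where $c_\ast$ is the Riesz normalization (so that $(-\Delta)^\s (c_\ast |\cdot|^{2\s-N}) = \delta_0$ on $\R^N$), and $h_\Omega$ is the unique bounded solution of the fractional Dirichlet problem $(-\Delta)^\s h_\Omega = 0$ in $\Omega$ with exterior data $c_\ast|\cdot|^{2\s-N}$ (which is bounded on $\Omega^c$ since $0 \in \Omega$). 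This yields $\int \Phi_\Omega (-\Delta)^\s \xi\,dx = \xi(0)$ for every $\xi \in C^2_0(\Omega)$ and $\Phi_\Omega(x) = c_\ast |x|^{2\s-N} + O(1)$ near the origin. For $u_\ast$, I would take the classical Green-kernel solution $u_\ast(x) = \int_\Omega G_\Omega(x,y) f(y)\,dy$, which under $f \in L^1(\Omega)$ is the unique solution of the standard fractional Dirichlet problem and satisfies $\int u_\ast (-\Delta)^\s \xi\,dx = \int_\Omega f\xi\,dx$ for $\xi \in C^2_0(\Omega)$ (by applying the Green representation $\xi(x) = \int_\Omega G_\Omega(x,y)(-\Delta)^\s \xi(y)\,dy$ to the admissible test function $\xi$, which vanishes outside $\Omega$, and Fubini). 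Linearity then produces a solution $u_k$ of \eqref{eq 2.1fk-special-case} satisfying the distributional identity \eqref{eq:distributional-k-special-case}.

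The asymptotic formula \eqref{beh-1-special-case} follows because the sharper hypothesis $f \in L^\infty(\Omega,|x|^\rho dx)$ with $\rho<2\s$ combined with the Green-kernel bound $G_\Omega(x,y) \le C|x-y|^{2\s-N}$ yields $u_\ast(x) = o(|x|^{2\s-N})$ as $x \to 0$, while $\Phi_\Omega(x) \sim c_\ast |x|^{2\s-N}$; the identification $c_{\s,0} = 1/c_\ast$, with the explicit integral in the theorem, comes from evaluating $\int_{\R^N} |x|^{2\s-N} (-\Delta)^\s \xi\,dx$ via the principal-value definition of $(-\Delta)^\s$, Fubini, and polar coordinates on the kernel $|e_1-z|^{-(N+2\s)}$. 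For uniqueness, the difference $w = u - \tilde u$ of two solutions with the same $k$ vanishes in $\Omega^c$ and satisfies $\int w (-\Delta)^\s \xi\,dx = 0$ for every $\xi \in C^2_0(\Omega)$; testing first against $\xi \in C^\infty_c(\Omega\setminus\{0\})$ shows that $(-\Delta)^\s w = 0$ distributionally on $\Omega\setminus\{0\}$, and I would then invoke a removable-singularity result at the origin followed by the standard fractional maximum principle to conclude $w \equiv 0$. I expect this removable-singularity step to be the main obstacle: the admissible space $L^\infty_{loc}(\R^N\setminus\{0\}) \cap L^1(\R^N,\frac{dx}{1+|x|^{N+2\s}})$ permits at most an $|x|^{2\s-N}$-type singularity (spanned by $\Phi_\Omega$), but the extended identity precisely pins the coefficient of that singular mode to zero.

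For part~(ii), suppose for contradiction that a nonnegative solution $u$ exists. Let $v \in L^\infty(\R^N)$ be the bounded torsion function solving $(-\Delta)^\s v = 1$ in $\Omega$, $v = 0$ in $\Omega^c$, and construct cutoff approximations $\varphi_n := \chi_n v \in C^\infty_c(\Omega\setminus\{0\})$ with $\chi_n \nearrow 1$ on compacta of $\Omega\setminus\{0\}$, chosen so that $(-\Delta)^\s \varphi_n$ stays uniformly bounded on $\R^N$ and $(-\Delta)^\s \varphi_n \to 1$ pointwise a.e.\ on $\Omega$. Testing the solution identity for $u$ against $\varphi_n$ and passing to the limit (dominated convergence on the right, using $u \in L^1(\Omega)$ which follows from the weighted $L^1$-hypothesis on $u$ combined with the boundedness of $\Omega$; monotone convergence on the left, using $u,f \ge 0$) yields
\begin{equation*}
\int_\Omega f\, v\,dx \;=\; \int_\Omega u\,dx \;<\; \infty.
\end{equation*}
Since $f \in C^\theta_{loc}(\bar\Omega\setminus\{0\})$ is bounded on compacta in $\bar\Omega$ away from the origin, the divergence $\int_\Omega f = +\infty$ must originate from a non-integrable singularity of $f$ at $0$; but $v$ is continuous and strictly positive on a neighborhood of $0$, so $\int_\Omega f v \ge \delta \int_{B_r(0)\cap\Omega} f\,dx = +\infty$, a contradiction.
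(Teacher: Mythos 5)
Your part~(i) proof follows a genuinely different (and for $\mu=0$, more elementary) route than the paper. The paper does not give a standalone argument for Theorem~\ref{sec:main-theorem-special-case}; it derives it as the $\mu=0$ instance of Theorem~\ref{theorem-C}, whose proof goes through the $W^\s(\Omega)$/$W^\s_*(\Omega)$ framework, comparison principles for the operator $\cL^\s_\mu$ on annuli $\Omega_\eps$, and careful limiting arguments. Your construction of $\Phi_\Omega = c_\ast|\cdot|^{2\s-N} - h_\Omega$ by harmonically correcting the whole-space fundamental solution, combined with the Green-kernel representation $u_\ast = \int G_\Omega(\cdot,y)f(y)\,dy$, works precisely because $\mu=0$: the Green function, the Riesz kernel identity $(-\Delta)^\s(c_\ast|\cdot|^{2\s-N})=\delta_0$, and the Poisson-extension $h_\Omega$ of bounded exterior data are all available. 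The asymptotics and the uniqueness argument (the identity over \emph{all} $\xi\in\cC^2_0(\Omega)$ forces $w$ to be $\s$-harmonic in all of $\Omega$, hence $w\equiv0$) are sound, and this is exactly the observation the paper makes in Remark~1.6(iii) to justify the stronger uniqueness at $\mu=0$.

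Your proof of part~(ii) has a genuine gap. The cutoffs $\varphi_n=\chi_n v$ with $\chi_n(x)=\chi(nx)$ cannot have $(-\Delta)^\s\varphi_n$ uniformly bounded on $\R^N$: since $v$ is smooth near the origin with $v(0)>0$, near $x=0$ one has $\varphi_n(x)\approx v(0)\chi(nx)$ and hence
\begin{equation*}
(-\Delta)^\s\varphi_n(x)\approx v(0)\,n^{2\s}\,[(-\Delta)^\s\chi](nx),
\end{equation*}
so $\|(-\Delta)^\s\varphi_n\|_{L^\infty(B_{2/n})}\sim n^{2\s}\to\infty$. Consequently dominated convergence fails on the left side of $\int u(-\Delta)^\s\varphi_n\,dx=\int f\varphi_n\,dx$. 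The blowup is not merely technical: the singular contribution $n^{2\s}\int_{B_{2/n}}u\,dx$ is $O(1)$ when $u\sim|x|^{2\s-N}$ (and can diverge for worse singularities, which your a~priori class $L^\infty_{loc}(\R^N\setminus\{0\})\cap L^1(\R^N,\frac{dx}{1+|x|^{N+2\s}})$ does not exclude). This is precisely the mechanism that produces the $k\,\xi(0)$ term, so the clean identity $\int_\Omega fv\,dx=\int_\Omega u\,dx$ is in general false — e.g. for $f\equiv0$ and $u=\Phi_\Omega$. You would first need a pointwise bound $u\lesssim|x|^{2\s-N}$ for arbitrary nonnegative distributional solutions (itself a nontrivial singularity-classification step, which the paper sidesteps by comparison in $W^\s_*$), and even then you would obtain $\int fv\,dx=\int u\,dx - \kappa\, v(0)$ for some $\kappa\ge0$ rather than the claimed equality. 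A further, lesser, issue is that $\varphi_n\notin C^\infty_c(\Omega\setminus\{0\})$: $v$ does not vanish in $\Omega$ near $\partial\Omega$ and is only of class $C^\s$ there, so you would also have to cut off near the boundary, introducing additional terms to control. The paper's proof of Theorem~\ref{theorem-C}(iii) (Section~5.1) instead works with normalized Dirac-like data $\delta_n$ concentrated near $0$, shows via \eqref{g-estimate} and Proposition~\ref{sec:dirichl-probl-bound-comp-corol-corol} that the corresponding solutions $w_n$ converge to a nontrivial limit $w_\infty$ satisfying \eqref{3.0-infty}, and obtains the contradiction from the comparison $w_n\le u_f/n\to0$; that argument avoids the issues above.
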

Here and in the following, $\omega_{_{N-1}}$ is the measure of the unit sphere $\mathbb{S}^{N-1}$ in $\R^N$, and $e_1=(1,0,\cdots,0)\in\R^N$ is the first coordinate vector.
 A key role in our analysis will be played by the fundamental solution associated with the operator $\cL^\s_\mu$, which is a radially symmetric classical solution to the problem 
\begin{equation}\label{eq 1.1}
\cL^\s_\mu u=0\quad \  {\rm in}\ \, \mathbb{ R}^N\setminus \{0\}.
 \end{equation}
 In the case $\mu=0$, this fundamental solution is, up to a constant, given by $x \mapsto \Phi_{\s,0}(x):= |x|^{2\s-N}$, and it is uniquely determined by the distributional identity 
 \begin{equation}
   \label{eq:distributional-identity-mu-0}
\int_{\R^N}\Phi_{\s,0} (-\Delta)^\s \xi\,dx =  c_{\s,0} \xi(0) \qquad \text{for all $\xi \in C^2_c(\R^N)$.}   
 \end{equation}
A key step in analyzing the solution set of (\ref{eq 2.1fk}) for $\mu \not = 0$ is to extend the distributional identity~(\ref{eq:distributional-identity-mu-0}). For this we need to recall some properties of the set of radially symmetric classical solutions of (\ref{eq 1.1}). The following proposition summarizes and partly extends results contained in \cite[Section 3.1]{FLS-2008} and \cite[Section 2]{Robert-Ghoussoub}, see also \cite[Lemma 3.1]{F}.

\begin{proposition}
\label{proposition-A}
Assume that  $\s\in (0,1)$, $N\ge2$ and $\mu_0$ is given by (\ref{mu--00}).
Then for $\mu\ge \mu_0$, there exist a unique $\tau_-(\s,\mu)\in(-N,\, \frac{2\s-N}{2}]$ and a unique
$\tau_+(\s,\mu)\in[\frac{2\s-N}{2},\, 2\s)$ such that the functions
$\Phi_{\s,\mu},\  \Gamma_{\s,\mu} \in L^1(\R^N,\frac{dx}{1+|x|^{N+2\s}}) \cap C^\infty(\R^N \setminus \{0\})$ given by
\begin{equation}\label{fu}
  \Phi_{\s, \mu}(x)=\left\{\arraycolsep=1pt
\begin{array}{lll}
  |x|^{\tau_-(\s,\mu)}\quad
   &{\rm if}\ \, \mu>\mu_0\\[1mm]
 \phantom{   }
- |x|^{\tau_-(\s,\mu)}\ln|x| \quad &{\rm   if}\ \, \mu=\mu_0
 \end{array}
 \right. \qquad{\rm and}\quad \Gamma_{\s, \mu}(x)=|x|^{\tau_+(\s,\mu)}
\end{equation}
are classical solutions of (\ref{eq 1.1}). Furthermore, the mapping $\mu \mapsto \tau_-(\s,\mu)$ is continuous and strictly decreasing in $[\mu_0,+\infty)$,
and the mapping $\mu \mapsto \tau_+(\s,\mu)$ is continuous and strictly increasing in $[\mu_0,+\infty)$. In addition,
 \begin{align}%\label{im}
&\tau_-(\s,\mu)+\tau_+(\s,\mu)=2\s-N \qquad \text{for all $\mu \ge \mu_0$,}\nonumber\\
&\tau_-(\s,\mu_0)=\tau_+(\s,\mu_0)=\frac{2\s-N}2,\quad\ 
\tau_-(\s,0)=2\s-N, \quad\ \tau_+(\s,0)=0, \nonumber\\
&\lim_{\mu\to+\infty} \tau_-(\s,\mu)=-N\quad {\rm and}\quad \lim_{\mu\to+\infty} \tau_+(\s,\mu)=2\s.
 \nonumber
 \end{align}
\end{proposition}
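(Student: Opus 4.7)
Everything rests on the classical identity
\[
(-\Delta)^\s|x|^\tau \;=\; \Lambda(\tau)\,|x|^{\tau-2\s} \quad\text{on}\ \R^N\setminus\{0\},\qquad \tau\in(-N,2\s),
\]
with
\[
\Lambda(\tau)\;:=\;2^{2\s}\,\frac{\Gamma(\tfrac{N+\tau}{2})\,\Gamma(\tfrac{2\s-\tau}{2})}{\Gamma(-\tfrac{\tau}{2})\,\Gamma(\tfrac{N-2\s+\tau}{2})},
\]
obtained from the Fourier transform of the homogeneous tempered distribution $|x|^\tau$ (cf.\ \cite{FLS-2008,F}). Hence $|x|^\tau$ with $\tau\in(-N,2\s)$ solves (\ref{eq 1.1}) classically if and only if $\Lambda(\tau)=-\mu$, and the whole proposition reduces to analyzing this scalar function.

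The main analytic step is to establish four properties of $\Lambda$. First, the symmetry $\Lambda(\tau)=\Lambda(2\s-N-\tau)$ is read off directly, making $\tau_*:=(2\s-N)/2$ a critical point. Second, substituting $\tau_*$ and applying the Legendre duplication formula for $\Gamma$ yields $\Lambda(\tau_*)=-\mu_0$. Third, as $\tau\to 2\s^-$ one has $\Gamma(\tfrac{2\s-\tau}{2})\to+\infty$ while $\Gamma(-\tau/2)\to\Gamma(-\s)<0$ (since $-\s\in(-1,0)$), so $\Lambda(\tau)\to-\infty$; by symmetry the same holds as $\tau\to-N^+$. Fourth, $\tau_*$ is the \emph{unique} critical point on $(-N,2\s)$: writing $\tau=\tau_*+2h$, the four $\Gamma$-arguments become $a\pm h,\,b\pm h$ with $a=(N+2\s)/4>b=(N-2\s)/4>0$, and $\Lambda'(\tau)=0$ reduces to $\psi(a+h)-\psi(a-h)=\psi(b+h)-\psi(b-h)$; since $\psi'$ is strictly decreasing on $(0,\infty)$ and $b<a$, this forces $h=0$ on the window $\tau\in(2\s-N,0)$ where all four arguments are positive. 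The extension to the outer intervals $(-N,2\s-N)\cup(0,2\s)$ uses the reflection formula for $\Gamma$ and the endpoint behavior above, or equivalently the detailed shape analysis of $\Lambda$ in \cite[Section 3.1]{FLS-2008} and \cite[Section 2]{Robert-Ghoussoub}. Altogether $\Lambda$ is strictly increasing on $(-N,\tau_*]$, strictly decreasing on $[\tau_*,2\s)$, with global maximum $-\mu_0$.

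Granting these facts, the rest is soft. For each $\mu\ge\mu_0$ (equivalently $-\mu\le-\mu_0=\max\Lambda$), the equation $\Lambda(\tau)=-\mu$ has a unique solution in each of $(-N,\tau_*]$ and $[\tau_*,2\s)$, which define $\tau_-(\s,\mu)$ and $\tau_+(\s,\mu)$; they coincide iff $\mu=\mu_0$. Continuity and strict monotonicity of $\mu\mapsto\tau_\pm(\s,\mu)$ follow from the implicit function theorem applied to the smooth function $\Lambda$ using the sign of $\Lambda'$ on each branch. The identity $\tau_-+\tau_+=2\s-N$ is the symmetry of $\Lambda$. At $\mu=0$ inspection of the formula shows $\Lambda(\tau)=0$ exactly at $\tau=2\s-N$ and $\tau=0$, giving $\tau_-(\s,0)=2\s-N$ and $\tau_+(\s,0)=0$. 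Because $\Lambda$ diverges only at the endpoints $\tau=-N,2\s$, letting $\mu\to\infty$ forces $\tau_-(\s,\mu)\to-N$ and $\tau_+(\s,\mu)\to 2\s$. Membership $\Phi_{\s,\mu},\Gamma_{\s,\mu}\in L^1(\R^N,\tfrac{dx}{1+|x|^{N+2\s}})$ then reduces to $\tau_->-N$ and $\tau_+<2\s$, which we have.

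The critical case $\mu=\mu_0$ requires a second independent radial solution, since $\tau_-$ and $\tau_+$ collapse. I would obtain it by differentiating the identity $(-\Delta)^\s|x|^\tau=\Lambda(\tau)|x|^{\tau-2\s}$ in $\tau$ at $\tau=\tau_*$; using $\partial_\tau|x|^\tau=|x|^\tau\log|x|$ together with $\Lambda'(\tau_*)=0$ and $\Lambda(\tau_*)=-\mu_0$ one finds
\[
(-\Delta)^\s\bigl(|x|^{\tau_*}\log|x|\bigr)\;=\;-\mu_0\,|x|^{\tau_*-2\s}\log|x|,
\]
i.e.\ $\cL^\s_{\mu_0}(|x|^{\tau_*}\log|x|)=0$ classically on $\R^N\setminus\{0\}$. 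Choosing the sign so that $\Phi_{\s,\mu_0}(x)=-|x|^{\tau_*}\log|x|$ is positive near the origin produces the announced fundamental solution, and its integrability against $(1+|x|^{N+2\s})^{-1}$ is a direct estimate since $\tau_*\in(-N,2\s)$. The principal obstacle throughout is the global unimodality of $\Lambda$: the digamma argument cleanly handles the inner window $(2\s-N,0)$, while the outer intervals demand either a careful sign analysis via the $\Gamma$-reflection formula or an appeal to the references cited above.
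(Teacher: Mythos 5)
Your overall strategy matches the paper's: reduce everything to the scalar coefficient function $\Lambda(\tau)$ (the paper's $c_\s(\tau)$) via the identity $(-\Delta)^\s|\cdot|^\tau = c_\s(\tau)\,|\cdot|^{\tau-2\s}$, exploit the symmetry $c_\s(\tau)=c_\s(2\s-N-\tau)$ and the blow-up at the endpoints, invert on each branch to define $\tau_\pm(\s,\mu)$, and obtain the logarithmic solution at $\mu=\mu_0$ by differentiating the identity in $\tau$. That differentiation step is exactly the paper's argument.

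Where your proposal has a genuine gap is precisely where you flag it yourself: the global unimodality of $\Lambda$ on all of $(-N,2\s)$. Your digamma argument — reducing $\Lambda'(\tau)=0$ to $\psi(a+h)-\psi(a-h)=\psi(b+h)-\psi(b-h)$ and invoking strict concavity of $\psi$ — is clean but valid only on the inner window $\tau\in(2\s-N,0)$, where all four Gamma arguments are positive. You then defer the outer intervals $(-N,2\s-N)\cup(0,2\s)$ to either an unworked reflection-formula argument or to \cite{FLS-2008,Robert-Ghoussoub}. But those references treat only $\mu\in(\mu_0,0]$, which corresponds exactly to $\tau_\pm$ in the inner window; they do not cover $\mu>0$, which is what forces $\tau_+$ into $(0,2\s)$ and $\tau_-$ into $(-N,2\s-N)$. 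Indeed, the paper explicitly states that the extension to $\mu>0$ (and $\mu=\mu_0$) is being given a self-contained proof because the references do not handle it. So the appeal to references does not fill the hole.

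The paper avoids the issue entirely with a different and more robust argument: it rewrites $c_\s$ via the singular-integral representation of $(-\Delta)^\s$ as
\[
c_\s(\tau)=-\frac{C_{N,\s}}{2}\int_{\R^N}\frac{|x-e_1|^\tau+|x+e_1|^\tau-2}{|x|^{N+2\s}}\,dx,
\]
and then differentiates twice under the integral to obtain
\[
c_\s''(\tau)=-\frac{C_{N,\s}}{2}\int_{\R^N}\frac{|e_1-x|^{\tau}(\log|e_1-x|)^2+|e_1+x|^{\tau}(\log|e_1+x|)^2}{|x|^{N+2\s}}\,dx<0
\]
for every $\tau\in(-N,2\s)$. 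Strict concavity on the full interval follows at once, with no case distinctions and no reflection formula; combined with the symmetry and the endpoint blow-up, this gives the unique maximum at $\tau_*=(2\s-N)/2$ and exactly two preimages of each value below $-\mu_0$. You should replace your digamma-plus-references step with this integral representation to make the argument self-contained. As a minor remark, your reference to the Legendre duplication formula is unnecessary: $\Lambda(\tau_*)=-\mu_0$ follows from direct substitution, since the four Gamma arguments pair up as $\tfrac{N+2\s}{4},\tfrac{N+2\s}{4}$ over $\tfrac{N-2\s}{4},\tfrac{N-2\s}{4}$.
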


For the range $\mu_0 < \mu \le 0$, these properties have been established in \cite[Lemma 3.1 and 3.2]{FLS-2008}, see also \cite[Proposition 2.1]{Robert-Ghoussoub}. The extension to the cases $\mu = \mu_0$ and $\mu>0$ is not difficult, and we shall give a self-contained proof in Section~\ref{sec:radi-symm-solut} below. 

At first glance it seems natural to guess that, for $\mu \not = 0$, the function $\Phi_{\s,\mu}$ also satisfies the distributional identity (\ref{eq:distributional-identity-mu-0}) in place of $\Phi_{\s,0}$ with $(-\Delta)^\s$ replaced by $\cL^\s_\mu$ and with $c_{\s,0}$ replaced by a suitable constant. However, this fails to be true already in the case of the classical Hardy operator $\cL^1_\mu:= -\Delta +\frac{\mu}{|x|^2}$ in dimensions $N \ge 3$. To see this, we note that, for $\s=1$, the radially symmetric solutions $\Phi_{1,\mu}$ and $\Gamma_{1,\mu}$  have the exponents
\begin{equation}\label{1.1}
 \tau_\pm(1,\mu)=- (N-2)/2\,\pm\sqrt{\mu+(N-2)^2/4},
\end{equation}
which are two roots of $\mu-\tau(\tau+N-2)=0$, see e.g. \cite{C, BDT}. %Note that
%$\tau_-(1,\mu) \to - \infty$ and $\tau_+(\s,\mu) \to +\infty$ as $\mu \to +\infty$,  which is different from the case $\s\in(0,1)$.   
Via an integration by parts argument, it is easy to observe that 
\begin{equation}
  \label{eq:new-LHS-int}
\int_{\R^N}|x|^{\tau_-(1,\mu)}\Big(-\Delta +\frac{\mu}{|x|^2}\Big) \xi\,dx = 0 \qquad \text{for all $\xi \in C^2_c(\R^N)\qquad$ \,if \,$\mu \in \Big(-\frac{(N-2)^2}{4},\, 0\Big)$,}
\end{equation}
since $\tau_-(1,\mu)> 2-N$ in this case. Moreover, in the case $\mu>0$ we have 
$\tau_-(1,\mu)< 2-N$, and therefore the integral in (\ref{eq:new-LHS-int}) is not even well-defined if $\xi(0)\not = 0$. In the recent paper \cite{CQZ}, this problem has been solved by establishing the new distributional identity
\begin{equation}\label{eq0030}
\int_{\R^N}\Phi_\mu \mathcal{L}_\mu^*\xi\, \Gamma_{1,\mu}dx =b_\mu\xi(0) \qquad \text{for all $\xi\in C_c^\infty(\R^N)$}
\end{equation}
with a formally adjoint operator $\mathcal{L}^*_\mu=-\Delta -2\frac{\tau_+(1,\mu) }{|x|^2}\,x\cdot\nabla$
and a suitable (explicit) constant $b_\mu$. 

In the present paper, we have to overcome a similar problem since $\tau_-(\s,\mu)<2\s-N$ for $\mu>0$ and $\tau_-(\s,\mu)>2\s-N$ for $\mu<0$. Therefore we need a new distributional identity related to the operators $\cL^\s_\mu$ which extends (\ref{eq:distributional-identity-mu-0}) to the case $\mu \not = 0$. {\it To simplify the notations, we write $B_r=B_r(0)$ for $r>0$, $\Phi_{\mu}$ in place of $\Phi_{\s,\mu}$ and $\Gamma_{\mu}$ in place of $\Gamma_{\s,\mu}$ in the following.}  We shall see that the dual of the operator $\cL^\s_\mu$ is the weighted fractional Laplacian $(-\Delta)^\s_{\Gamma_\mu}$ given by 
\begin{equation}\label{L}
(-\Delta)^\s_{\Gamma_\mu} v(x):=
C_{N,\s}\lim_{\epsilon\to0^+} \int_{\R^N\setminus B_\epsilon }\frac{v(x)-
v(z)}{|x-z|^{N+2\s}} \, \Gamma_\mu(z) dz.
\end{equation}
This expression is well defined for $x \in \R^N \setminus \{0\}$ if $v \in L^1(\R^N, \frac{\Gamma_\mu(x)}{1+|x|^{N+2\s}}dx)$ and if $v$ is twice continuously differentiable in a neighborhood of $x$. %Indeed, under this assumptions we can write 
%\begin{align}
%(-\Delta)^\s_{\Gamma_\mu} v(x) &= C_{N,\s}\, p.v. \int_{\R^N}\frac{v(x)%-v(y)}{|x-y|^{N+2 \s}}\Gamma_\mu(y)\,dy \nonumber\\
%&= (-\Delta)^\s(v\Gamma_\mu)(x)-  v(x) (-\Delta)^\s \Gamma_\mu (x) \non%umber =\cL_\mu^\s (v \Gamma_\mu) (x) \qquad \text{for $x \in \R^N \setminus %\{0\}$.} \label{product-gamma-mu-rule}
%\end{align}
The weight $\Gamma_\mu$ blows up at origin for $\mu\in[\mu_0,0)$ and decays at the origin for $\mu>0$; therefore the operator $(-\Delta)^\s_{\Gamma_\mu}$ is in general not uniformly elliptic. The distributional identity for  the fundamental solution $\Phi_\mu$ of $\cL_\mu^\s$ at the origin now reads as follows.

 \begin{theorem}
\label{Theorem B}
 For any $\xi\in C^2_c(\R^N)$, we have
\begin{equation}\label{1.2}
 \int_{\R^N}\Phi_\mu   (-\Delta)^\s_{\Gamma_\mu}\xi \, dx  =c_{\s,\mu}\xi(0),
 \end{equation}
where the normalization constant $c_{\s,\mu}>0$ is given by
 \begin{equation}\label{cmu}
 c_{\s,\mu}:= \left\{ \displaystyle \arraycolsep=1pt
\begin{array}{lll}
  \displaystyle C_{N,\s} \omega_{_{N-1}} \int^1_0 \int_{B_{t}} \frac{|z|^{\tau_-(\s,\mu)}-|z|^{\tau_+(\s,\mu)}}{|e_1-z|^{N+2\s}}  dzdt\quad\ 
   &{\rm if}\ \, \mu>\mu_0,\\[4mm]
 \phantom{   }
 \displaystyle C_{N,\s} \omega_{_{N-1}} \int^1_0 \int_{B_{t}} \frac{|z|^{\frac{2\s-N}{2}}(-\ln|z|)}{|e_1-z|^{N+2\s}}  dzdt \quad\  &{\rm  if}\ \,  \mu=\mu_0.
 \end{array}
 \right.
 \end{equation}
\end{theorem}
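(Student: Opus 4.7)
The plan is to exploit the conjugation $\cL^\s_\mu(\Gamma_\mu \xi)=(-\Delta)^\s_{\Gamma_\mu}\xi$ so that the operator $\cL^\s_\mu$, for which $\Phi_\mu$ is a fundamental solution away from the origin, can be transferred onto $\Phi_\mu$ by fractional integration by parts, leaving only a contribution concentrated at the origin. First I would establish the pointwise identity
\begin{equation*}
\cL^\s_\mu(\Gamma_\mu \xi)(x)=(-\Delta)^\s_{\Gamma_\mu}\xi(x)\qquad \text{for all }\xi\in C^2_c(\R^N),\ x\in\R^N\setminus\{0\},
\end{equation*}
by decomposing $(\Gamma_\mu\xi)(x)-(\Gamma_\mu\xi)(z)=\xi(x)[\Gamma_\mu(x)-\Gamma_\mu(z)]+\Gamma_\mu(z)[\xi(x)-\xi(z)]$ inside the integral defining $(-\Delta)^\s(\Gamma_\mu\xi)(x)$ and using $\cL^\s_\mu\Gamma_\mu\equiv 0$ on $\R^N\setminus\{0\}$.

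Next I would pick a cutoff $\eta_\eps\in C^\infty(\R^N)$ with $\eta_\eps\equiv 0$ on $B_\eps$ and $\eta_\eps\equiv 1$ outside $B_{2\eps}$, and set $\xi_\eps:=\eta_\eps\xi$, so that $\Gamma_\mu\xi_\eps\in C^\infty_c(\R^N\setminus\{0\})$. The standard distributional pairing for $(-\Delta)^\s$ (valid since $\Phi_\mu\in L^1(\R^N,\frac{dx}{1+|x|^{N+2\s}})$), together with the obvious symmetry of the Hardy term, yields
\begin{equation*}
\int_{\R^N}\Phi_\mu\,\cL^\s_\mu(\Gamma_\mu\xi_\eps)\,dx=\int_{\R^N}(\Gamma_\mu\xi_\eps)\,\cL^\s_\mu\Phi_\mu\,dx=0,
\end{equation*}
the last equality because $\cL^\s_\mu\Phi_\mu$ vanishes pointwise on $\supp(\Gamma_\mu\xi_\eps)\subset\R^N\setminus\{0\}$. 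Combined with the conjugation, this gives $\int\Phi_\mu(-\Delta)^\s_{\Gamma_\mu}\xi_\eps\,dx=0$ for every $\eps>0$, and hence $\int\Phi_\mu(-\Delta)^\s_{\Gamma_\mu}\xi\,dx=\int\Phi_\mu(-\Delta)^\s_{\Gamma_\mu}(\xi-\xi_\eps)\,dx$.

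To finish I evaluate the limit of the right-hand side as $\eps\to 0$ by a scaling argument. Since $\xi-\xi_\eps=(1-\eta_\eps)\xi$ is supported in $B_{2\eps}$, I substitute $x=\eps u$, $z=\eps v$: the integrand carries an overall scaling factor $\eps^{\tau_-+\tau_+-2\s+N}$, which equals $\eps^0=1$ by the Wronskian-type identity $\tau_-+\tau_+=2\s-N$ in Proposition~\ref{proposition-A}. After rescaling, the integrand converges pointwise to $\xi(0)\,\Phi_\mu(u)(-\Delta)^\s_{\Gamma_\mu}(1-\eta)(u)$, and a uniform integrable majorant (obtained by splitting into the regions near $u=v$, near $v=0$, and at $|v|\to\infty$, exploiting respectively the $C^2$-regularity of $\xi$, the exponent $\tau_+>-N$, and the compact support of $\xi$) justifies dominated convergence. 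This yields $\int\Phi_\mu(-\Delta)^\s_{\Gamma_\mu}\xi\,dx=c_{\s,\mu}\xi(0)$ for a constant $c_{\s,\mu}$ independent of $\xi$ and of the cutoff. To match this constant with the formula in~(\ref{cmu}), I would use the antisymmetrized representation
\begin{equation*}
\int\Phi_\mu(-\Delta)^\s_{\Gamma_\mu}\xi\,dx=\frac{C_{N,\s}}{2}\iint\frac{[\Phi_\mu(x)\Gamma_\mu(z)-\Phi_\mu(z)\Gamma_\mu(x)][\xi(x)-\xi(z)]}{|x-z|^{N+2\s}}\,dz\,dx
\end{equation*}
applied to $\xi=(1-|x|)_+=\int_0^1\chi_{B_t}(x)\,dt$, exchange the $t$-integration with the $(x,z)$-integration by Fubini, and perform the polar-coordinate computation using the rotational invariance of the kernel to reduce the outer $x$-integration to a single radial direction, thereby reaching the stated double integral.

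The main obstacle is the scaling step: since the prefactor is critical ($\eps^0$), there is no room for loss and the uniform dominating function must be constructed directly from the pointwise behaviour of $\Phi_\mu$, $\Gamma_\mu$ and $(-\Delta)^\s_{\Gamma_\mu}(1-\eta)$. The case $\mu=\mu_0$ is the most delicate since $\Phi_{\mu_0}(x)=-|x|^{(2\s-N)/2}\ln|x|$ breaks exact scale invariance and introduces a logarithmic factor; the cleanest treatment is to first prove the identity for $\mu>\mu_0$, where $\Phi_\mu=|x|^{\tau_-}$ scales cleanly, and then pass to the limit $\mu\downarrow\mu_0$, exploiting that $|x|^{\tau_-}-|x|^{\tau_+}$ collapses to $-|x|^{(2\s-N)/2}\ln|x|$ as a derivative in $\tau$ at $\tau=(2\s-N)/2$, so as to recover the second case in~(\ref{cmu}).
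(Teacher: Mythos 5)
Your strategy shares the same conceptual starting point as the paper's proof: both exploit the conjugation $\cL^\s_\mu(\Gamma_\mu v)=(-\Delta)^\s_{\Gamma_\mu}v$ and the fact that $\cL^\s_\mu\Phi_\mu=0$ away from the origin, then truncate near zero to isolate a point-mass contribution. The implementation differs: the paper works with the $\eps$-principal-value truncation of the singular integral itself, i.e.\ the double integral over $(\R^N\setminus B_\eps)\times B_\eps$, and extracts the constant via the substitution $z=y/|x|$ and splitting the region at $|x|=\sqrt{\eps}$; you instead multiply $\xi$ by a smooth cutoff $\eta_\eps$ and rescale. For $\mu>\mu_0$ your rescaling argument is plausible: the uniform majorant can indeed be obtained from Proposition~\ref{lambda-estimate}, because $g_\eps(v):=(1-\eta)(v)\xi(\eps v)$ is a family with $C^2$-norm and support uniformly controlled, so $|(-\Delta)^\s_{\Gamma_\mu}g_\eps(u)|\le c\min\{\Lambda_\mu(u),|u|^{-N-2\s}\}$ uniformly in $\eps$, and $|u|^{\tau_-}\min\{\Lambda_\mu(u),|u|^{-N-2\s}\}\in L^1$. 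In principle this is a cleaner route to the \emph{existence} of a constant $c_{\s,\mu}$; the paper's more pedestrian truncation buys a direct computation of its value along the way.

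There are, however, genuine gaps. First, your explicit evaluation of $c_{\s,\mu}$ via $\xi=(1-|x|)_+$ requires extending (\ref{1.2}) from $C^2_c$ to Lipschitz test functions, which needs an approximation argument you do not supply; and the Fubini exchange after inserting the layer-cake formula $\xi=\int_0^1\chi_{B_t}\,dt$ into the antisymmetrized double integral requires absolute convergence that is not obvious (the kernel $|x-z|^{-N-2\s}$ paired with the jump of $\chi_{B_t}$ has a genuine boundary singularity at $|x|=|z|=t$ for $\s\ge\frac12$; cf.\ Lemma~\ref{h-estimate-lemma}~(\ref{eq:h-general-estimate-2})). Second, and more seriously, the proposed treatment of $\mu=\mu_0$ does not work as stated. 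As $\mu\downarrow\mu_0$ you have $\tau_\pm(\s,\mu)\to\frac{2\s-N}{2}$, so $\Phi_\mu=|x|^{\tau_-(\s,\mu)}\to |x|^{\frac{2\s-N}{2}}=\Gamma_{\mu_0}$, \emph{not} $\Phi_{\mu_0}=-|x|^{\frac{2\s-N}{2}}\ln|x|$. Thus a direct limit passage recovers an identity for $\Gamma_{\mu_0}$ (which is the trivial identity with constant $0$), not for $\Phi_{\mu_0}$. To recover the logarithmic fundamental solution you would have to study the renormalized quotient $\frac{\Phi_\mu-\Gamma_\mu}{\tau_-(\s,\mu)-\tau_+(\s,\mu)}\to -\Phi_{\mu_0}$; this in turn requires proving the additional identity $\int_{\R^N}\Gamma_\mu(-\Delta)^\s_{\Gamma_\mu}\xi\,dx=0$ for $\mu>\mu_0$ and establishing uniform estimates in $\mu$ to pass to the limit --- none of which is sketched, and both of which are nontrivial (the paper instead treats $\mu=\mu_0$ by a separate, direct estimate of the logarithmic boundary terms). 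Until this is addressed, the critical case is not covered.
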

The integral on the LHS of (\ref{1.2}) is indeed well defined for $\xi \in C^2_c(\R^N)$, since we have the estimate
\begin{equation}\label{1.3}
   | (-\Delta)^\s_{\Gamma_\mu}\xi(x)| \le  c_0 \min \{\Lambda _\mu(x),|x|^{-N-2\s}\}\qquad \text{for $\xi\in C^2_c(\R^N)$ and $x \in \R^N \setminus \{0\}$,}
\end{equation}
where $c_0=c_0(\s,\mu,\xi)>0$ is a constant and
\begin{equation}\label{def-Lambda}
 \Lambda _\mu(x)= \left\{\arraycolsep=1pt
\begin{array}{lll}
  \displaystyle 1\quad
   &{\rm if}\ \,  \tau_+(\s,\mu)>2\s-1,\\[1mm]
   |x|^{1-2\s+\tau_+(\s,\mu)} \quad
   &{\rm if}\ \,  \tau_+(\s,\mu)<2\s-1,\\[1mm]
 \phantom{   }
 \displaystyle  1+(-\ln|x|)_+ \quad &{\rm   if } \ \,  \tau_+(\s,\mu) =2\s-1.
 \end{array}
 \right.
\end{equation}
We shall prove this estimate in Proposition~\ref{lambda-estimate} below. Moreover, the integrals in the definition of the normalization constant $c_{\s,\mu}$ also exist in Lebesgue sense, see Lemma~\ref{h-estimate-lemma} below.

Although the operator $(-\Delta)^\s_{\Gamma_\mu}$ plays a similar role for $\cL^\s_\mu$ as the operator $\mathcal{L}^*_\mu$ plays for $\cL^1_\mu$, the proofs of (\ref{eq0030}) and (\ref{1.2}) are completely different. While (\ref{eq0030}) essentially follows by a suitable integration by parts, the proof of (\ref{1.2}) is based on a lengthy combination of integral transformations and estimates of remainder terms. With the distributional identity (\ref{1.2}) at hand, we may now study the singular problem (\ref{eq 2.1fk}) for all $\mu \ge \mu_0$. Our main result is the following.

\begin{theorem}
\label{theorem-C}
Let $\mu\ge\mu_0$ and $f\in C^\theta_{loc}(\bar\Omega\setminus \{0\})$ for some $\theta\in(0,1)$. 
\begin{enumerate}
\item[(i)] (Existence) If $f  \in L^1(\Omega,\Gamma_\mu(x) dx)$, then for every $k\in\R$ there exists a solution $u_k \in L^1(\Omega,\Lambda_\mu  dx)$ of   problem (\ref{eq 2.1fk})
satisfying the distributional identity
\begin{equation}
  \label{eq:distributional-k}
\int_{\Omega}u_k   (-\Delta)^\s_{\Gamma_\mu} \xi \,dx = \int_{\Omega}f \xi\, \Gamma_\mu dx +c_{\s,\mu} k\xi(0) \qquad \text{for all $\xi \in \cC^2_0(\Omega).$}
\end{equation}
\item[(ii)]  (Existence and Uniqueness) If $f \in L^\infty(\Omega, |x|^{\rho}dx)$ for some $\rho < 2\s - \tau_+(\s,\mu)$, then for every $k\in\R$ there exists a unique solution $u_k \in L^1(\Omega,\Lambda_\mu  dx)$ of  problem (\ref{eq 2.1fk}) with the asymptotics
 \begin{equation}\label{beh 1}
 \lim_{x \to 0}\:\frac{u_k(x)}{\Phi_\mu(x)} = k.
 \end{equation}
Moreover, $u_k$ satisfies the distributional identity (\ref{eq:distributional-k}). 
\item[(iii)] (Nonexistence) If $f$ is nonnegative and 
\begin{equation}\label{f2}
 \int_{\Omega} f\, \Gamma_\mu dx   =+\infty,
\end{equation}
then the problem
\begin{equation}\label{eq 1.1f}
 \arraycolsep=1pt\left\{
\begin{array}{lll}
 \displaystyle  \mathcal{L}_\mu^\s  u= f\quad
   {\rm in}\ \, {\Omega}\setminus \{0\},\\[1.5mm]
 \phantom{  L_\mu \, }
 \displaystyle  u\ge 0\quad  {\rm   in}\ \,   \Omega^c
 \end{array}\right.
\end{equation}
has no nonnegative distributional solution $u \in L^\infty_{loc}(\R^N \setminus \{0\}) \cap L^1(\R^N,\frac{dx}{1+|x|^{N+2\s}})$.
\end{enumerate}
\end{theorem}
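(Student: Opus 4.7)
The plan is to construct $u_k$ as a superposition: a particular solution $u_0$ driven by $f$ (which already handles the case $k=0$) plus $k$ times a distinguished singular solution $\Psi$ of the homogeneous problem whose distributional behavior at the origin, via Theorem~\ref{Theorem B}, produces exactly the Dirac contribution $c_{\s,\mu}k\xi(0)$ in~(\ref{eq:distributional-k}). For part (i), I would first treat $k=0$ by approximation: truncate $f$ to bounded sources $f_n$ vanishing near $0$, solve the regular Dirichlet problem $\cL^\s_\mu u_0^{(n)} = f_n$ in $\Omega$ with vanishing exterior data via the variational method in the weighted Sobolev space associated to $\cL^\s_\mu$ (well-posed for $\mu\ge\mu_0$ by the fractional Hardy inequality~(\ref{eq:fractional-hardy-C-infty})), and establish a uniform $L^1(\Omega,\Lambda_\mu dx)$ bound by duality. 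Specifically, for $\xi \in C^2_0(\Omega)$ with $|(-\Delta)^\s_{\Gamma_\mu}\xi| \le c_0\Lambda_\mu$ (cf.~(\ref{1.3})), the identity $\int u_0^{(n)}(-\Delta)^\s_{\Gamma_\mu}\xi\,dx = \int f_n\xi\,\Gamma_\mu\,dx$ gives $\|u_0^{(n)}\|_{L^1(\Omega,\Lambda_\mu dx)} \lesssim \|f\|_{L^1(\Omega,\Gamma_\mu dx)}$, and passing to the limit via Fatou and dominated convergence yields $u_0$ satisfying~(\ref{eq:distributional-k}) with $k=0$.

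For general $k$, I construct $\Psi$ by cutting off the fundamental solution. Fix $\eta \in C^\infty_c(\Omega)$ with $\eta\equiv 1$ in a neighborhood of $0$; since $\cL^\s_\mu\Phi_\mu \equiv 0$ away from $0$, the source $g := \cL^\s_\mu(\eta\Phi_\mu)$ is smooth and bounded on $\Omega$ with support disjoint from $0$. Solve $\cL^\s_\mu z = g$ in $\Omega$ with $z\equiv 0$ outside $\Omega$ by the theory above and set $\Psi := \eta\Phi_\mu - z$; then $\Psi$ vanishes outside $\Omega$, solves the homogeneous equation in $\Omega\setminus\{0\}$, and satisfies $\Psi/\Phi_\mu \to 1$ at the origin. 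Defining $u_k := u_0 + k\Psi$ and testing~(\ref{eq:distributional-k}) against $\xi \in C^2_0(\Omega)$, the $u_0$- and $z$-contributions are handled by the $k=0$ identity, while applying Theorem~\ref{Theorem B} to $\eta\xi \in C^2_c(\R^N)$ yields $\int_{\R^N}\Phi_\mu\,(-\Delta)^\s_{\Gamma_\mu}(\eta\xi)\,dx = c_{\s,\mu}\xi(0)$; the remaining commutator-type error cancels with the $z$-contribution.

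For part (ii), under the pointwise bound $|f(x)|\le C|x|^{-\rho}$ with $\rho < 2\s-\tau_+(\s,\mu)$, the Green-function representation $u_0(x) = \int_\Omega G^\Omega_\mu(x,y) f(y)\,dy$ combined with sharp two-sided near-origin estimates of $G^\Omega_\mu$ (whose leading order off the diagonal is $\Phi_\mu(x)\,\Gamma_\mu(y)|x-y|^{-N}$) yields $u_0(x) = o(\Phi_\mu(x))$ as $x\to 0$; the threshold $\rho = 2\s-\tau_+(\s,\mu)$ is exactly the exponent balancing $\Phi_\mu(x)$ against $\int_\Omega |y|^{\tau_+(\s,\mu)-\rho}\,dy$. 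Since $\Psi/\Phi_\mu \to 1$, this gives~(\ref{beh 1}). For uniqueness, a difference $w$ of two solutions with identical asymptotics solves $\cL^\s_\mu w = 0$ distributionally in $\Omega\setminus\{0\}$, vanishes on $\Omega^c$, and satisfies $w(x)=o(\Phi_\mu(x))$. This smallness, together with~(\ref{eq:distributional-k}) tested against $\xi \in C^2_0(\Omega)$ with $\xi(0)\ne 0$, forces the Dirac coefficient to vanish, so $w$ extends to a distributional solution on all of $\Omega$ with zero Dirichlet data; the weak maximum principle (valid for $\mu\ge\mu_0$ by~(\ref{eq:fractional-hardy-C-infty})) then gives $w\equiv 0$.

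For part (iii), suppose toward contradiction that a nonnegative $u$ as described exists. A standard comparison with supersolutions built from $\Phi_\mu$ gives the a priori bound $u(x) \le C\Phi_\mu(x)$ near $0$, hence $\int_\Omega u\,\Lambda_\mu\,dx < \infty$ because $\Phi_\mu\Lambda_\mu$ is integrable near $0$ (recall $\tau_-(\s,\mu)+\tau_+(\s,\mu)=2\s-N$). Choosing nonnegative $\xi_n \in C^2_0(\Omega)$ with $\xi_n \nearrow \mathbf{1}_\Omega$ pointwise on $\Omega\setminus\{0\}$ and $|(-\Delta)^\s_{\Gamma_\mu}\xi_n| \le c_0\Lambda_\mu$ uniformly (possible by~(\ref{1.3})), a dual testing of the equation yields $\int_\Omega f\xi_n\,\Gamma_\mu\,dx \le \int_\Omega u\,|(-\Delta)^\s_{\Gamma_\mu}\xi_n|\,dx \le c_0\int_\Omega u\,\Lambda_\mu\,dx$; monotone convergence contradicts~(\ref{f2}). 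The main obstacle is the asymptotic analysis in part (ii): the sharp vanishing rate $u_0 = o(\Phi_\mu)$ demands uniform two-sided Green-function estimates for $\cL^\s_\mu$ up to the singular point, and the critical case $\mu=\mu_0$ is especially delicate because $\Phi_\mu$ carries a logarithmic factor that must be tracked precisely in the remainder.
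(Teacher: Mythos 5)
Your construction in part (i)---truncate $f$, solve regular problems, pass to the limit with an $L^1(\Omega,\Lambda_\mu dx)$ bound, and superpose $k$ times a fundamental solution $\Psi=\eta\Phi_\mu-z$ of the homogeneous problem---is essentially the paper's route (the paper's $\Phi_\mu^\sOmega=\Phi_\mu-w_1+w_2$ in Theorem~\ref{teo 2}, and the monotone approximation $f_n=\min\{f_+,n\}$ in the proof of Theorem~\ref{theorem-C}(i)). The later parts of your proposal, however, have genuine gaps.

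In part (ii) you derive the asymptotics $u_0=o(\Phi_\mu)$ from a claimed two-sided near-origin estimate for the Green function $G^\Omega_\mu$, but no such estimate is available in the paper (or, for $\mu=\mu_0$, anywhere in the literature); in fact the paper explicitly points out that the Green-function construction of \cite{Bhakta-et-al} in the range $\mu\in(\mu_0,0)$ does not capture the fundamental singularity at the origin, and the critical case forces one outside $\cH^\s_0(\Omega)$. The paper proves the stronger pointwise bound $|u|\le c\|f\|_{L^\infty(\Omega,|x|^\varrho dx)}\Gamma_\mu$ (which directly gives $u=o(\Phi_\mu)$) by an explicit barrier: a supersolution built from $|x|^{2\s-\varrho}$ and $\Gamma_\mu$, together with the comparison principle, and handles $\mu=\mu_0$ by approximation from $\mu_n\downarrow\mu_0$. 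For uniqueness, your detour through the distributional identity and a ``Dirac coefficient'' is not needed and runs into the open problem noted in the paper's Remark 1.6(iii); instead one goes directly through Remark~\ref{weak-solution-more-regular}(ii) to get $w\in W^\s_*(\Omega)$ and then applies the weak comparison Lemma~\ref{sec:dirichl-probl-bound-comp} using the $o(\Phi_\mu)$ asymptotics.

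The most serious gap is in part (iii). Your argument rests on the a priori bound $u(x)\le C\Phi_\mu(x)$ near $0$ ``by comparison with supersolutions built from $\Phi_\mu$,'' but $\Phi_\mu$ is a solution of the homogeneous equation, so $M\Phi_\mu$ is \emph{not} a supersolution of $\cL^\s_\mu w=f$ for nonnegative $f\not\equiv 0$, and comparison gives at best a \emph{lower} bound on a nonnegative supersolution, not an upper one. A general nonnegative distributional solution with $\int_\Omega f\,\Gamma_\mu\,dx=\infty$ need not satisfy $u\lesssim\Phi_\mu$ near $0$---ruling out such singular behavior is precisely what the theorem asserts, so you cannot assume it. Moreover, your subsequent duality step implicitly extends the distributional identity to test functions $\xi_n$ with $\xi_n(0)\ne 0$, which is not available for a mere distributional solution in $\Omega\setminus\{0\}$. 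The paper avoids both issues by a different mechanism: it chooses concentric annuli $B_{r_n}\setminus B_{s_n}$ with $\int_{B_{r_n}\setminus B_{s_n}} f\,\Gamma_\mu\,dx=n$, sets $f_n=\frac1n f\chi_n$ and lets $w_n$ solve the regular problem; by comparison $w_n\le u_f/n\to 0$ a.e., whereas Lemma~\ref{lm 2.3} shows $w_n\to w_\infty$ in $L^1(\Omega,\Lambda_\mu dx)$ with $\int_\Omega w_\infty(-\Delta)^\s_{\Gamma_\mu}\xi\,dx=\xi(0)$, so $w_\infty\not\equiv 0$---a contradiction. You would need to replace your duality argument by something of this kind.
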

We note here that the assumption $f \in L^\infty(\Omega, |x|^{\rho}dx)$ for some $\rho < 2\s - \tau_+(\s,\mu)$ also implies that $f  \in L^1(\Omega,\Gamma_\mu(x) dx)$ since $2\s<2\le N$ by assumption.
 
\begin{remark}{\rm 
  \begin{itemize}
\item[(i)] In both Theorem~\ref{Theorem B} and Theorem~\ref{theorem-C}, the case $\mu= \mu_0$ is more difficult to treat than the case $\mu> \mu_0$ and requires separate estimates. 
  \item[(ii)] We shall see that the solution $u_k$ of problem (\ref{eq 2.1fk}) writes in the form $u_k= u_{min} + k\Phi_\Omega$, where $\Phi_\Omega \in L^\infty(\R^N \setminus \{0\})\cap L^1(\Omega)$ solves 
    \begin{equation}
      \label{eq:problem-fundamental-solution}
\mathcal{L}_\mu^\s \Phi_\Omega = 0 \quad \text{in $\,\Omega \setminus \{0\}$},\qquad \Phi_\Omega \equiv 0 \quad \text{in $\,\Omega^c$},\qquad  \lim_{x \to 0}\:\frac{\Phi_\Omega(x)}{\Phi_\mu(x)} = 1.
    \end{equation}
In the case $f \ge 0$, $u_{\min}$ is approached by a sequence $(u_n)_n$ of solutions of (\ref{eq 2.1fk}) corresponding to bounded and monotone approximations $f_n$ of 
$f$. When $\mu>\mu_0$, $f$ is bounded and $k=0$, an application of the Hardy inequality and Riesz representation theorem 
gives rise to a unique weak solution of (\ref{eq 2.1fk}) in the standard Sobolev space $\cH^\s_0(\Omega)$, see Section~\ref{sec:preliminary-notation} below. In the case $\mu=\mu_0$ the situation is more delicate, and we cannot expect to have solutions in $\cH^\s_0(\Omega)$. For this reason, we develop an approach for 
singular weak solutions contained in $\cH^\s_{loc}(\Omega \setminus \{0\})$.
The key step in this approach is Theorem~\ref{sec:dirichl-probl-bound-comp-corol} below.
\item[(iii)] Theorem~\ref{sec:main-theorem-special-case} is essentially a special case of Theorem~\ref{theorem-C} for $\mu=0$. The only additional information we have in Theorem~\ref{sec:main-theorem-special-case} is a stronger uniqueness statement. Here, solutions are already uniquely determined by the distributional identity~(\ref{eq:distributional-k-special-case}). This is merely a consequence of the well-known fact that every $u \in L^\infty_{loc}(\R^N \setminus \{0\}) \cap L^1(\R^N,\frac{dx}{1+|x|^{N+2\s}})$ satisfying 
\begin{equation}
  \label{eq:distributional-special-case-0}
\int_{\Omega}u   (-\Delta)^\s \xi \,dx = 0 \qquad \text{for all $\xi \in \cC^2_0(\Omega)$}
\end{equation}
and $u  =0$ in $\Omega^c$ must vanish identically. The corresponding statement for $\mu \not = 0$ remains an open problem. In this case, we have to replace (\ref{eq:distributional-special-case-0}) by the condition 
\begin{equation}
  \label{introduction-remark}
\int_{\Omega}u   (-\Delta)^\s_{\Gamma_\mu} \xi \,dx = 0 \qquad \text{for all $\xi \in \cC^2_0(\Omega).$}
\end{equation}
  \end{itemize}
}
\end{remark}

In our final main result, we note that the nonexistence result given in Theorem~\ref{theorem-C}(iii) extends, even without condition~(\ref{f2}), to the case $\mu<\mu_0$. 

\begin{theorem}
\label{theorem-D}
Let $\mu <\mu_0$ and $f \in L^\infty_{loc}(\overline \Omega \setminus \{0\})$ be a nonnegative function. 
Then the problem \eqref{eq 1.1f} has no nonnegative distributional solution $u \in L^\infty_{loc}(\R^N \setminus \{0\}) \cap L^1(\R^N,\frac{dx}{1+|x|^{N+2\s}})$.
\end{theorem}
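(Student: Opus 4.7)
The plan is to argue by contradiction, combining a fractional ground state (Picone-type) identity with the sharpness of $\mu_0$ in the fractional Hardy inequality \eqref{eq:fractional-hardy-C-infty}. Suppose that some nontrivial nonnegative distributional solution $u$ to \eqref{eq 1.1f} exists for some $\mu<\mu_0$. Since $\mu<\mu_0<0$ and $u\ge 0$, the potential term $-\mu u/|x|^{2\s}$ is nonnegative, and together with $f\ge 0$ this gives
\begin{equation*}
(-\Delta)^\s u \;=\; f-\frac{\mu}{|x|^{2\s}}u \;\ge\; 0 \qquad \text{distributionally in $\Omega\setminus\{0\}$.}
\end{equation*}
Combined with $u=0$ on $\Omega^c$ and $u\ge 0$ on $\R^N$, the strong maximum principle for the fractional Laplacian implies that either $u\equiv 0$ (the trivial degenerate case, which forces $f\equiv 0$) or $u>0$ throughout $\Omega\setminus\{0\}$.

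Assuming the latter, I would invoke the Frank--Seiringer ground state representation: for any $\varphi\in C^\infty_c(\Omega\setminus\{0\})$, since $\supp\varphi$ is a compact subset of $\Omega\setminus\{0\}$ on which $u$ is bounded above and bounded below by positive constants (using $u\in L^\infty_{loc}(\R^N\setminus\{0\})$ and positivity), one has
\begin{equation*}
\int_{\R^N}|(-\Delta)^{\s/2}\varphi|^2\,dx \;\ge\; \int_{\R^N} \varphi^2(x)\,\frac{(-\Delta)^\s u(x)}{u(x)}\,dx.
\end{equation*}
Substituting the equation and using $f\ge 0$ yields $(-\Delta)^\s u/u\ge -\mu/|x|^{2\s}$, whence
\begin{equation*}
\int_{\R^N}|(-\Delta)^{\s/2}\varphi|^2\,dx + \mu\int_\Omega\frac{\varphi^2}{|x|^{2\s}}\,dx \;\ge\; 0 \qquad \text{for every $\varphi\in C^\infty_c(\Omega\setminus\{0\})$.}
\end{equation*}

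To close the argument, I would exploit the sharpness of $\mu_0$. Both the fractional Sobolev seminorm and the Hardy weight $|x|^{-2\s}$ scale identically under the dilations $\varphi\mapsto\lambda^{(N-2\s)/2}\varphi(\lambda\,\cdot\,)$, so the sharp Hardy constant on $C^\infty_c(B_r(0)\setminus\{0\})$ coincides with that on $C^\infty_c(\R^N)$, namely $|\mu_0|$ (the singleton $\{0\}$ carrying zero fractional capacity for $\s<N/2$). Picking $r$ so small that $B_r(0)\subset\Omega$, the strict inequality $-\mu>|\mu_0|$ then produces $\varphi\in C^\infty_c(B_r(0)\setminus\{0\})\subset C^\infty_c(\Omega\setminus\{0\})$ with
\begin{equation*}
\int_{\R^N}|(-\Delta)^{\s/2}\varphi|^2\,dx + \mu\int\frac{\varphi^2}{|x|^{2\s}}\,dx \;<\; 0,
\end{equation*}
contradicting the inequality obtained from the ground state representation.

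The main obstacle is the rigorous justification of the ground state identity for the distributional class of solutions at hand. Since $\varphi$ is compactly supported in $\Omega\setminus\{0\}$ --- hence away from both the singularity of $u$ at the origin and the boundary $\partial\Omega$ --- a regularization argument approximating $u$ from below by $u+\eps$ on a neighborhood of $\supp\varphi$ and passing to the limit should reduce the matter to the standard fractional Picone inequality, which only requires $u$ to be locally bounded and bounded below by a positive constant on $\supp\varphi$.
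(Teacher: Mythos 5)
Your argument is correct in outline and takes a genuinely different route from the paper. The paper's proof bootstraps: first it shows (via a comparison with a solution of a regularized problem, the strong maximum principle, and a second comparison involving $\Gamma_{\mu_0}$) that any nontrivial solution $u_0$ must satisfy a lower bound $u_0(x)\ge c|x|^{-(N-2\s)/2}$ near the origin; it then rewrites the equation as $\cL^\s_{\mu_0}u_0=\tilde f$ with $\tilde f=\frac{\mu_0-\mu}{|\cdot|^{2\s}}u_0+f\ge 0$, verifies $\int_\Omega\tilde f\,d\gamma_{\mu_0}=\infty$, and invokes Theorem~\ref{theorem-C}(iii) for the critical parameter $\mu_0$ to conclude. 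Your proof instead combines the strong maximum principle, a fractional Picone/ground-state inequality $\cE^\s(\varphi,\varphi)\ge\cE^\s(u,\varphi^2/u)$, the weak formulation of the equation to lower-bound $\cE^\s(u,\varphi^2/u)$ by $-\mu\int\varphi^2|x|^{-2\s}$, and then the sharpness of $\mu_0$ together with dilation invariance and the zero fractional $\cH^\s$-capacity of $\{0\}$ (which is exactly what the paper's Appendix~B establishes for $N\ge 2>2\s$) to produce a contradicting test function supported in a small punctured ball. The trade-offs: your approach is self-contained in the sense of not depending on the rather delicate nonexistence machinery of Theorem~\ref{theorem-C}(iii), whereas the paper's reduction to $\mu=\mu_0$ recycles that machinery (and its barrier constructions) and avoids having to justify Picone for the given distributional class. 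The step you flag as the main obstacle is real but closeable in the paper's framework: Remark~\ref{weak-solution-more-regular}(ii) (proved in Appendix~C) shows that a distributional solution lies in $W^\s_*(\Omega)$ and satisfies $\cE^\s_\mu(u,v)=\int_\Omega fv\,dx$ for all $v\in\cH^\s_0(\Omega_\eps)$; since $u$ is $C^{\s+\eps}_{loc}$ and bounded below on $\supp\varphi$ by the strong maximum principle, $\varphi^2/u\in\cH^\s_0(\Omega_\eps)$ is an admissible test function, and the pointwise Picone inequality $(u(x)-u(y))\bigl(\frac{\varphi^2(x)}{u(x)}-\frac{\varphi^2(y)}{u(y)}\bigr)\le(\varphi(x)-\varphi(y))^2$ integrates because both $\cE^\s(\varphi,\varphi)$ and $\cE^\s(u,\varphi^2/u)$ are absolutely convergent (Lemma~\ref{u-minus-lemma}(i)). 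One small cosmetic remark: the problem \eqref{eq 1.1f} imposes $u\ge 0$ in $\Omega^c$, not $u=0$; this does not affect your argument (you only use $u\ge0$ in $\R^N$ for the maximum principle), but you should state it as in the paper, and note explicitly, as the paper does implicitly, that the theorem excludes only nontrivial nonnegative solutions, since $u\equiv 0$ trivially solves the problem when $f\equiv 0$.
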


To put our results into perspective, we wish to briefly discuss recent related work. In fact, our study complements recent seminal contributions in the literature on weak solutions of the problem
\begin{equation}
  \label{eq:weak-nonsing-problem}
  \left\{
    \begin{aligned}
      \cL^\s_\mu u  &= f  &&\qquad  \text{in $\Omega$},\\
      u  &=0    &&\qquad \text{in $\Omega^c$}
    \end{aligned}
  \right.
  \end{equation}
  and its nonlinear generalizations, see e.g. \cite{ABP,BMP,Bhakta-et-al,Robert-Ghoussoub,FLS-2008} and the references therein. Different definitions of weak solutions of (\ref{eq:weak-nonsing-problem}) are used in the literature, and they usually depend on $\mu$ and the integrability properties of $f$, see e.g. \cite[Definitions 2.5. and 2.7]{ABP}. Nevertheless, these definitions always involve test function spaces larger than $C^\infty_c(\Omega \setminus \{0\})$ and are therefore less general than the definition of a distributional solution of (\ref{eq 2.1fk}). In particular, no distributional identities related to point measures supported at the origin have been considered in previous papers. Related to this aspect, we point out that, in the case $\mu \in (\mu_0,0)$, source terms with measures supported {\em away from the origin} on the RHS of (\ref{eq:weak-nonsing-problem}) can be treated with the help of the Green function constructed in \cite{Bhakta-et-al}, but the construction in \cite{Bhakta-et-al} does not yield a solution of (\ref{eq:problem-fundamental-solution}). Moreover, as we have shown in Theorems~\ref{Theorem B} and \ref{theorem-C}, the associated distributional identities are different in the more subtle case of a point measure supported at the origin.
  
In the local case $\s=1$,  semilinear Dirichlet problems with singular Hardy terms inside the domain or on the boundary and measures have
been  studied in \cite{CQZ,CZ,ChVe} by considering related distributional identities and solutions with respect to a suitably chosen dual test functions space. On the contrary, in the fractional case $\s\in(0,1)$, due to the nonlocality and singular nature of the operator $(-\Delta)^\s_{\Gamma_\mu}$, it remains a challenging open problem how to enlarge the test functions space $\cC^2_0(\Omega)$ into a suitable one that gives rise to a version of Kato's inequality. In the particular case $\mu=0$, this has been done in \cite{CZ}.

The paper is organized as follows. In Section \ref{sec:radi-symm-solut}, we include, for the readers convenience, a self-contained proof of Proposition~\ref{proposition-A} which follows arguments in \cite{FLS-2008} and extends the range of parameters $\mu$ considered there. Section~\ref{sec:fund-solut-distr} is devoted to the proof of the distributional identity given in Theorem~\ref{Theorem B} and related estimates. In Section~\ref{sec:dirichl-probl-bound} we develop the functional analytic framework to study singular solutions of (\ref{eq 2.1fk}), and we prove the existence and uniqueness parts of Theorem~\ref{theorem-C}. In particular, this requires to address weak maximum principles and to study variational weak solutions in Section~\ref{sec:preliminary-notation}. 
In Section~\ref{sec:nonexistence} we prove our nonexistence results given in Theorem~\ref{theorem-C}(iii) and Theorem~\ref{theorem-D}. Finally, in the appendix of this paper, we annex proofs of auxiliary results which are adaptations of known facts to the singular context of the Hardy operator. In particular, in Section~\ref{sec:appendix-a.-an} we prove local $L^\infty$-bounds for solutions of a linear inhomogeneous fractional problem, while in Section~\ref{sec:appendix-b.-density} we prove a density property in $\cH^\s_0(\Omega)$.\\

{\bf Notation.} Throughout the paper, we write $B_r:= B_r(0)$ for the open ball with radius $r>0$ centered at the origin. As noted above, since $s \in (0,1)$ is fixed, we write $\Phi_{\mu}$ in place of $\Phi_{\s,\mu}$ and $\Gamma_{\mu}$ in place of $\Gamma_{\s,\mu}$ for the functions introduced in Proposition~\ref{proposition-A}. Moreover, to further abbreviate the notation, we set $d \gamma_\mu = \Gamma_\mu dx$. 

\setcounter{equation}{0}

\section{Radially symmetric solutions of the fractional Hardy equation}
\label{sec:radi-symm-solut}
This section is devoted to the proof of Proposition~\ref{proposition-A}. We start with some preliminary lemmas. In the following, for $\tau>-N-2$, we regard the function $|\cdot|^\tau$ as a tempered distribution in $\mathcal{S}'(\R^N)$. It is a regular distribution if $\tau>-N$, and it is understood as a principal value integral if $-N-2<\tau\le N$, i.e.
$$
|\cdot|^{\tau}(\psi) = \lim_{\eps \to 0} \int_{\R^N \setminus B_\eps}[\psi(x)-\psi(0)]|x|^\tau dx \qquad \text{for a Schwartz function $\psi \in \mathcal{S}(\R^N)$.}
$$

\begin{lemma}
\label{fractional-power}
Let $\tau \in (-N,2\s)$, so that $|\cdot |^\tau \in L^1(\R^N,\frac{dx}{1+|x|^{N+2\s}})$. For $\tau \not \in \{0, 2 \s -N\}$, we then have
\begin{equation}
  \label{eq:fractional-power}
 (-\Delta)^\s |\cdot|^\tau = c_\s(\tau)|\cdot|^{\tau-2\s} \qquad \text{in $\mathcal{S}'(\R^N)$ with $c_\s(\tau) = 2^{2\s} \frac{\Gamma(\frac{N+\tau}{2})\Gamma(\frac{2\s-\tau}{2})}{\Gamma(-\frac{\tau}{2})\Gamma(\frac{N-2\s+\tau}{2})}.$}
\end{equation}
Moreover, $(-\Delta)^\s 1 = 0$ and
$(-\Delta)^\s |\cdot|^{2\s-N} = 2^{2 \s}\pi^{N/2}\frac{\Gamma(\s)}{\Gamma(\frac{N-2\s}{2})} \delta_0$ in $\mathcal{S}'(\R^N)$.
\end{lemma}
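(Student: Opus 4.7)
The plan is Fourier-analytic. For $\tau\in(-N,0)$ the distribution $|\cdot|^{\tau}$ is tempered and locally integrable, and it satisfies the classical Riesz formula
$$\cF(|\cdot|^{\tau}) = 2^{N+\tau}\pi^{N/2}\,\frac{\Gamma(\frac{N+\tau}{2})}{\Gamma(-\tau/2)}\,|\cdot|^{-N-\tau}\qquad\text{in }\mathcal{S}'(\R^N).$$
Since $(-\Delta)^{\s}$ acts on $\mathcal{S}'(\R^N)$ as the Fourier multiplier $|\xi|^{2\s}$, I would multiply this identity by $|\xi|^{2\s}$ to express $\cF((-\Delta)^{\s}|\cdot|^{\tau})$ as an explicit constant times $|\xi|^{2\s-N-\tau}$. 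Inverting the Fourier transform by applying the Riesz formula a second time (with the spatial exponent $\tau-2\s$ playing the role of $\tau$) and collecting the resulting powers of $2$, $\pi^{N/2}$, and Gamma-function factors produces exactly $c_{\s}(\tau)|\cdot|^{\tau-2\s}$.

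This direct argument covers the sub-range in which both applications of the Riesz formula land in the absolutely convergent regime. For the remaining admissible values $\tau\in(-N,2\s)\setminus\{0,2\s-N\}$ I would extend the identity by analytic continuation in $\tau$: both sides are $\mathcal{S}'(\R^N)$-valued meromorphic functions of $\tau$ (the left-hand side via the Gelfand--Shilov analytic continuation of the family $|\cdot|^{\tau}$ anticipated in the preamble of the lemma, the right-hand side by the explicit formula for $c_{\s}(\tau)$), holomorphic off the exceptional set, so uniqueness of analytic continuation closes the case. The exceptional value $\tau=0$ is immediate because $|\cdot|^{0}\equiv 1$ and $|\xi|^{2\s}\cF(1)=(2\pi)^{N}|\xi|^{2\s}\delta_{0}=0$. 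The exceptional value $\tau=2\s-N$ is handled by observing that $|\cdot|^{2\s-N}$ is proportional to the Riesz kernel $I_{2\s}$, whose Fourier transform is $|\xi|^{-2\s}$; multiplication by $|\xi|^{2\s}$ gives the constant distribution $1$, whose inverse Fourier transform is $\delta_{0}$, and reading off the normalization of $I_{2\s}$ reproduces the stated factor $2^{2\s}\pi^{N/2}\Gamma(\s)/\Gamma(\frac{N-2\s}{2})$.

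I expect the principal obstacle to be bookkeeping: tracking the several powers of $2$, $\pi^{N/2}$ and Gamma-function factors so that they collapse into the compact expression for $c_{\s}(\tau)$, and making the analytic continuation rigorous across the exceptional values $\tau\in\{0,2\s-N\}$ where the rational structure of $c_{\s}(\tau)$ interacts subtly with the distributional meaning of $|\cdot|^{\tau-2\s}$. At $\tau=0$ the zero of $1/\Gamma(-\tau/2)$ compensates the finite value of $|\cdot|^{-2\s}$, giving the trivial identity $(-\Delta)^{\s}1=0$; at $\tau=2\s-N$ the zero of $1/\Gamma(\frac{N-2\s+\tau}{2})$ must be paired against the non-integrable behavior of $|\cdot|^{\tau-2\s}$ near the origin as a residue in the analytic family, and the resulting $0\cdot\infty$ indeterminacy is what produces the Dirac mass with the precise normalization.
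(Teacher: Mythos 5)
Your proposal is correct and follows essentially the same Fourier-multiplier route as the paper: use the Riesz formula for $\mathcal{F}(|\cdot|^\tau)$, multiply by $|\zeta|^{2\s}$, and invert via a second application of the Riesz formula, with the paper appealing to Gel'fand--Shilov for validity of the formula across the full range of exponents where you instead phrase the extension as analytic continuation. The treatment of $\tau=0$ and $\tau=2\s-N$ matches the paper as well (the latter via $\mathcal{F}^{-1}(1)=\delta_0$).
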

\begin{proof} The claim for $\tau = 0$ is clear, so we may assume $\tau \not = 0$. We recall (see e.g. \cite[Chapter II]{GS}) that
$$
\mathcal{F}((-\Delta)^\s |\cdot|^\tau)(\zeta)=|\zeta|^{2\s}\mathcal{F}(|\cdot|\tau)(\zeta)=
\sigma(\tau)|\cdot|^{2\s-N-\tau}(\zeta) \quad {\rm in}\ \ \mathcal{S}'(\R^N)
$$
with
$\sigma(\tau):=2^{\tau+N}\pi^{N/2}\frac{\Gamma(\frac{\tau+N}2)}{\Gamma(-\frac\tau2)}.$ Consequently, if $\tau \not = 2\s-N$, we have that
$$
 (-\Delta)^\s |\cdot|^\tau  =  \sigma(\tau)  \mathcal{F}^{-1}\bigl(|\cdot|^{2\s-N-\tau}\bigr)
=  \frac{\sigma(\tau)}{\sigma(\tau-2\s)} |\cdot|^{\tau-2\s} = c_\s(\tau)|\cdot|^{\tau-2\s} \quad {\rm in}\ \ \mathcal{S}'(\R^N)
$$
and if $\tau  = 2\s-N$,
$$
(-\Delta)^\s |\cdot|^{2\s-N} = \sigma(2\s-N) \mathcal{F}^{-1}(1) = \sigma(2\s-N)\delta_0= 2^{2 \s}\pi^{N/2}\frac{\Gamma(\s)}{\Gamma(\frac{N-2\s}{2})}\delta_0\quad {\rm in}\ \ \mathcal{S}'(\R^N).
$$
We complete the proof.\end{proof}

\begin{remark}
\label{fractional-power-remark}
By the regularity theory for the fractional Laplacian (see \cite{Ls07}), the identities (\ref{eq:fractional-power}) hold in classical sense in $\R^N \setminus \{0\}$. 

Moreover, for $\tau \in \{0, 2\s- N\}$, we have $(-\Delta)^\s |\cdot|^{\tau}= 0$ in $\R^N \setminus \{0\}$ in classical sense.  
\end{remark}

\begin{lemma}
\label{c-function}
The function
$$
c_\s: (-N, 2 \s) \to \R, \qquad c_\s(\tau) =2^{2\s} \frac{\Gamma(\frac{N+\tau}{2})\Gamma(\frac{2\s-\tau}{2})}{\Gamma(-\frac{\tau}{2})\Gamma(\frac{N-2\s+\tau}{2})}
$$
is strictly concave and uniquely maximized at the point $\frac{2\s-N}{2}$ with the maximal value $2^{2\s}  \frac{\Gamma^2(\frac{N+2\s}4)}{\Gamma^2(\frac{N-2\s}{4})}.$

Moreover,
\begin{equation}
  \label{eq:c-symmetry}
c_\s(\tau)= c_\s(2\s-N-\tau) \qquad \text{for $\tau \in (-N,2\s)$}
\end{equation}
and
\begin{equation}
  \label{eq:c-asymptotics}
\lim_{\tau \to -N}c_\s(\tau) = \lim_{\tau \to 2 \s}c_\s(\tau)=-\infty.
\end{equation}
\end{lemma}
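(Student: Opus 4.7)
I would prove everything from a single integral representation of $c_\s$. Lemma~\ref{fractional-power} together with Remark~\ref{fractional-power-remark} gives the classical identity $(-\Delta)^\s|x|^\tau=c_\s(\tau)|x|^{\tau-2\s}$ on $\R^N\setminus\{0\}$ for every $\tau\in(-N,2\s)\setminus\{0,2\s-N\}$, while $(-\Delta)^\s|x|^\tau=0$ in $\R^N\setminus\{0\}$ at the two exceptional values. Evaluating at $x=e_1$ via the singular integral definition of $(-\Delta)^\s$ therefore yields
\[
c_\s(\tau)=C_{N,\s}\,\mathrm{p.v.}\!\int_{\R^N}\frac{1-|z|^\tau}{|e_1-z|^{N+2\s}}\,dz
\]
for all $\tau\in(-N,2\s)$, with both sides equal to $0$ at the exceptional values. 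The integral is well defined: at infinity the integrand decays like $|z|^{\tau-N-2\s}$ (integrable since $\tau<2\s$), near $0$ it is of order $|z|^\tau$ (integrable since $\tau>-N$), and near $z=e_1$ the principal value exists because $1-|z|^\tau$ is essentially odd along the normal direction.

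\textbf{Symmetry and endpoint asymptotics.} The substitution $\tau\mapsto 2\s-N-\tau$ swaps the two numerator gamma arguments $\{\tfrac{N+\tau}{2},\tfrac{2\s-\tau}{2}\}$ with themselves and the two denominator arguments $\{-\tfrac{\tau}{2},\tfrac{N-2\s+\tau}{2}\}$ with themselves, so $c_\s(\tau)=c_\s(2\s-N-\tau)$ is immediate. For~(\ref{eq:c-asymptotics}), as $\tau\to-N^+$ only $\Gamma(\tfrac{N+\tau}{2})$ blows up, tending to $+\infty$ through its simple pole at $0$, while the remaining three factors tend to $\Gamma(\tfrac{N+2\s}{2})>0$, $\Gamma(\tfrac{N}{2})>0$, and $\Gamma(-\s)<0$ (since $-\s\in(-1,0)$). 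The denominator therefore tends to a negative limit, and $c_\s(\tau)\to-\infty$. Symmetry gives the analogous limit at $\tau\to 2\s^-$.

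\textbf{Strict concavity and the maximizer.} Differentiating the integral representation twice in $\tau$ produces
\[
c_\s''(\tau)=-C_{N,\s}\!\int_{\R^N}\frac{|z|^\tau(\log|z|)^2}{|e_1-z|^{N+2\s}}\,dz,
\]
and crucially this integral is now \emph{absolutely} convergent: near $z=e_1$ we have $|z|^\tau(\log|z|)^2=O(|z-e_1|^2)$, so the integrand is $O(|z-e_1|^{2-N-2\s})$, which is integrable thanks to $2\s<2\le N$. The integrand is pointwise nonnegative and strictly positive off the unit sphere, hence $c_\s''(\tau)<0$ throughout $(-N,2\s)$, giving strict concavity. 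The symmetry identity then forces $c_\s'(\tfrac{2\s-N}{2})=0$, and strict concavity singles out $\tfrac{2\s-N}{2}$ as the unique maximizer; direct substitution into the gamma formula gives exactly $2^{2\s}\Gamma(\tfrac{N+2\s}{4})^2/\Gamma(\tfrac{N-2\s}{4})^2$.

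The main technical obstacle I foresee is the legitimacy of differentiating under the principal-value sign. This is however resolved by the observation that each $\tau$-derivative brings down an extra factor of $\log|z|$ which vanishes to first order at $|z|=1$; after one derivative the integrand is still a p.v.\ with odd-in-normal-direction leading behaviour near $e_1$, and after two derivatives the integrand is absolutely integrable, so the interchange follows from a standard dominated-convergence argument uniform in $\tau$ on compact subintervals of $(-N,2\s)$.
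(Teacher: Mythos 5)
Your proof is correct in substance and takes essentially the same route as the paper: pass from the formula $(-\Delta)^\s|\cdot|^\tau=c_\s(\tau)|\cdot|^{\tau-2\s}$ to an integral representation of $c_\s$, read off the symmetry and the sign of the endpoint limits from the Gamma quotient, and settle strict concavity by a second-derivative test. The one genuine difference is the choice of representation, and it matters for rigor. You use the principal-value form
$c_\s(\tau)=C_{N,\s}\,\mathrm{p.v.}\int_{\R^N}\frac{1-|z|^\tau}{|e_1-z|^{N+2\s}}\,dz$,
whereas the paper uses the symmetrized second-difference form
$c_\s(\tau)=-\tfrac{C_{N,\s}}{2}\int_{\R^N}\frac{|e_1+z|^\tau+|e_1-z|^\tau-2}{|z|^{N+2\s}}\,dz$,
which is an ordinary Lebesgue integral because the numerator is $O(|z|^2)$ near $z=0$ and the integrand is $O(|z\mp e_1|^\tau)$ near $z=\pm e_1$. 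In that form, the $\tau$-derivatives are also absolutely convergent (the leading odd terms in $\log|e_1\pm z|$ still cancel to second order at $z=0$), so Leibniz's rule applies without comment. Your p.v.\ route hits exactly the obstacle you flag: when $\s\ge\tfrac12$ the first derivative is still only a principal value, and the phrase ``standard dominated-convergence argument uniform in $\tau$'' does not resolve it, because dominated convergence says nothing about interchanging $d/d\tau$ with the $\eps\to0$ limit that defines the p.v. The clean fix is precisely the symmetrization step the paper performs (substitute $z=e_1\pm w$ and average) before differentiating; your two formulas for $c_\s$ and $c_\s''$ are then seen to be literally the same as the paper's. Once that is inserted, your argument is complete, and in fact your treatment of the endpoint limit is slightly more careful than the paper's: you observe explicitly that $\Gamma(-\s)<0$ forces the limiting denominator to be negative, whereas the paper only records that the remaining factors stay bounded.
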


\begin{proof}
By definition, we have $c_\s(2\s-N-\tau)=2^{2\s} \frac{\Gamma(\frac{2\s-\tau}{2})\Gamma(\frac{N+\tau}{2})}{\Gamma(\frac{N-2\s+\tau}{2})\Gamma(\frac{-\tau}{2})}=c_\s(\tau)$,
hence (\ref{eq:c-symmetry}) holds. Moreover, $\lim \limits_{\tau\to -N}\Gamma(\frac{N+\tau}{2})= +\infty$, whereas the other terms in the definition of $c_\s$ remain bounded. Hence $c_\s(\tau) \to-\infty$ as $\tau\to -N$, and by (\ref{eq:c-symmetry}) we get (\ref{eq:c-asymptotics}).

To prove the concavity of $c_\s$, we use derive a different representation. By Lemma~\ref{fractional-power} and Remark~\ref{fractional-power-remark}, we have
\begin{align*}
c_\s(\tau) |x|^{\tau-2\s} &=  [(-\Delta)^\s |\cdot|^\tau](x) = -\frac{C_{N,\s}}2 \int_{\R^N}\frac{|x+y|^{\tau}+|x-y|^\tau-2|x|^\tau}{|y|^{N+2\s}}\,dy\\
   &= -\frac{C_{N,\s}}2|x|^{\tau-2\s}\int_{\R^N}\frac{|e_1+z|^{\tau}+|e_1-z|^\tau-2}{|z|^{N+2\s}}\,dz \quad \text{for $x \in \R^N \setminus \{0\}$,}
\end{align*}
where $e_1=(1,0,\cdots,0)\in\R^N$, and thus $c_\s (\tau)=-\frac{C_{N,\s}}2\int_{\R^N}\frac{|x-e_1|^\tau+|x+e_1|^\tau-2}{|x|^{N+2\s}}\,dx.$

Consequently, for $\tau \in (-N,2\s)$, we have 
$$
c_\s'(\tau)=-\frac{C_{N,\s}}{2}\int_{\R^N}\frac{|e_1-x|^{\tau} \log|e_1-x|+|e_1+x|^{\tau}\log|e_1+x|}{|x|^{N+2\s}}dx
$$
and
$$
c_\s''(\tau)=-\frac{C_{N,\s}}{2}\int_{\R^N}\frac{|e_1-x|^{\tau} (\log|e_1-x|)^2+|e_1+x|^{\tau}(\log|e_1+x|)^2}{|x|^{N+2\s}}dx<0,
$$
which yields the strict concavity of $c_\s$. Combining this property with (\ref{eq:c-symmetry}) and (\ref{eq:c-asymptotics}), 
we obtain that $c_\s(\cdot)$ is uniquely maximized at $\tau= \frac{2\s-N}{2}$.
\end{proof}

\begin{proof}[Proof of Proposition~\ref{proposition-A}.]
 It follows from Lemma~\ref{c-function} that, for $\mu>\mu_0 = -c_\s(\frac{2\s-N}{2})$, the equation
$$
c_\s(\tau )=-\mu
$$
has a unique solution $\tau_-(\s,\mu) \in (-N,\frac{2\s-N}{2})$ and a unique solution $\tau_+(\s,\mu) \in (\frac{2\s-N}{2},2\s)$. Moreover, $\tau_-(\s,\mu)+\tau_+(\s,\mu)=2\s-N$ by (\ref{eq:c-symmetry}), and
$$
\lim_{\mu\to+\infty} \tau_-(\s,\mu)=-N, \qquad \lim_{\mu\to+\infty} \tau_+(\s,\mu)=2\s
$$
by (\ref{eq:c-asymptotics}). Defining $\Phi_\mu$ and $\Gamma_\mu$ by~(\ref{fu}), we thus deduce from Lemma~\ref{fractional-power} and Remark~\ref{fractional-power-remark} that the claim of Proposition~\ref{proposition-A} holds for $\mu > \mu_0$. For $\mu= \mu_0$ we now define $\tau_+(\s,\mu_0)= \tau_-(\s,\mu_0)= \frac{2\s-N}{2}$. Then $\Gamma_{\mu_0}$ -- as defined in ~(\ref{fu}) -- is a solution of (\ref{eq 1.1}) by Lemma~\ref{fractional-power}. Moreover, differentiating the identity
$$
c_\s(\tau) |\cdot|^{\tau-2\s} =   (-\Delta)^\s |\cdot|^\tau \quad\ \text{in $\mathcal{S}'(\R^N)$}
$$
from Lemma~\ref{fractional-power} with respect to $\tau$ at $\tau = \frac{2\s-N}{2}$, we obtain that
$$
c_\s'(\frac{2\s-N}{2})|\cdot|^{-\frac{2\s+N}{2}}+ c_\s(\frac{2\s-N}{2})|x|^{-\frac{2\s+N}{2}}\ln |\cdot|
= \bigl[(-\Delta)^\s (|\cdot|^{\frac{2\s-N}{2}} \ln |\cdot|)\bigr] \quad\ \text{in $\mathcal{S}'(\R^N)$.}
$$
Since $c_\s'(\frac{2\s-N}{2})= 0$ by Lemma~\ref{c-function}, we conclude that
$$
  (-\Delta)^\s \Phi_{\mu_0}= - [(-\Delta)^\s (|\cdot|^{\frac{2\s-N}{2}} \ln |\cdot|)]
= - c_\s(\frac{2\s-N}{2})|\cdot|^{-\frac{2\s+N}{2}}\ln |\cdot| =-\frac{\mu_0}{|x|^{2\s}}\Phi_{\mu_0} \quad\ \text{in $\mathcal{S}'(\R^N)$.}
$$
Moreover, the equation holds in classical sense in $\R^N \setminus \{0\}$. Hence $\Gamma_{\mu_0}$ solves (\ref{eq 1.1}) as well.
The proof is complete.
\end{proof}

\section{Fundamental solution and distributional identity}
\label{sec:fund-solut-distr}

In this section we give the proof of Theorem~\ref{Theorem B}. Recall that the fractional Laplacian with weight $\Gamma_\mu$ given by (\ref{L}) is the dual operator of $\cL^\s_\mu$ and its properties are
 important in our analysis of the fundamental solution associated to $\cL^\s_\mu$. Related to this, we first provide estimate (\ref{1.3}).

 \begin{proposition}
\label{lambda-estimate}
Let $\s \in (0,1)$ and $\mu \ge \mu_0$. Then we have    
\begin{equation}\label{1.3-proof}
   | (-\Delta)^\s_{\Gamma_\mu}\xi(x)| \le  c_0 \min \{\Lambda _\mu(x),|x|^{-N-2\s}\}\qquad \text{for $\xi\in C^2_c(\R^N)$ and $x \in \R^N \setminus \{0\}$,}
\end{equation}
where $c_0=c_0(\s,\mu,\xi)>0$ is a constant and $\Lambda_\mu$ is given by \eqref{def-Lambda}.
 \end{proposition}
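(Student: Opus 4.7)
The plan is to prove the two bounds $|(-\Delta)^\s_{\Gamma_\mu}\xi(x)|\le c_0|x|^{-N-2\s}$ and $|(-\Delta)^\s_{\Gamma_\mu}\xi(x)|\le c_0\Lambda_\mu(x)$ separately; the combined estimate (\ref{1.3-proof}) then follows by taking the minimum. The far-field bound is almost immediate: fix $R>0$ with $\supp\xi\subset B_R$. For $|x|\ge 2R$ we have $\xi(x)=0$, so
$$
(-\Delta)^\s_{\Gamma_\mu}\xi(x)=-C_{N,\s}\int_{B_R}\frac{\xi(z)\,|z|^{\tau_+(\s,\mu)}}{|x-z|^{N+2\s}}\,dz,
$$
and using $|x-z|\ge|x|/2$ on $B_R$ together with $\tau_+(\s,\mu)>-N$ (so $|\cdot|^{\tau_+(\s,\mu)}\in L^1_{loc}$) yields the $|x|^{-N-2\s}$ bound. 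For $|x|\le 2R$, the quantity $|x|^{-N-2\s}$ is bounded below by a positive constant, so the $|x|^{-N-2\s}$ estimate is automatic once $(-\Delta)^\s_{\Gamma_\mu}\xi$ is shown to be locally bounded; thus everything reduces to the $\Lambda_\mu$ bound.

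To establish $|(-\Delta)^\s_{\Gamma_\mu}\xi(x)|\le c_0\Lambda_\mu(x)$, I would decompose the integral at the surface $|z-x|=|x|/2$. On the inner ball $A_x=\{z:|z-x|\le|x|/2\}$ the variable $z$ is bounded away from the origin ($|z|\sim|x|$), so the weight admits the smooth expansion $|z|^{\tau_+}=|x|^{\tau_+}+\tau_+|x|^{\tau_+-2}\,x\cdot(z-x)+O(|x|^{\tau_+-2}|z-x|^2)$. Combining this with $\xi(x)-\xi(z)=-\nabla\xi(x)\cdot(z-x)+O(|z-x|^2)$ and integrating term by term over $A_x$ in the variable $y=z-x$, the pairing of the linear term in $\xi$ with the constant part of the weight vanishes by odd symmetry around $x$; the surviving dominant contribution arises from the pairing of $-\nabla\xi(x)\cdot y$ with $\tau_+|x|^{\tau_+-2}\,x\cdot y$, which after an elementary radial integration yields a term of order $|x|^{\tau_++1-2\s}$. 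All remainder terms are bounded by $|x|^{\tau_++2-2\s}$ and are subdominant as $|x|\to 0$. On the outer region $A_x^c$, I would decompose further by $|z|$: the shells $|z|\le|x|/2$ and $|x|/2<|z|\le 2|x|$ each contribute $O(|x|^{\tau_++1-2\s})$ via crude Lipschitz estimates on $\xi(x)-\xi(z)$ together with $|x-z|\ge|x|/2$ and the local integrability of $|z|^{\tau_+}$. For the tail $|z|>2|x|$ I would use $|x-z|\ge|z|/2$ and split $\xi(x)-\xi(z)=[\xi(x)-\xi(0)]+[\xi(0)-\xi(z)]$. The first piece is dominated by $\|\nabla\xi\|_\infty|x|$ and pairs with $\int_{|z|>2|x|}|z|^{\tau_+-N-2\s}\,dz\sim|x|^{\tau_+-2\s}$ (convergent since $\tau_+<2\s$) to give again $|x|^{\tau_++1-2\s}$. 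The second piece, bounded by $\|\nabla\xi\|_\infty|z|$ on $|z|\le R$ and by $2\|\xi\|_\infty$ on $|z|>R$ (where $\xi$ vanishes), reduces the estimate on the shell $(2|x|,R]$ to the explicit integral $\int_{2|x|}^R r^{\tau_+-2\s}\,dr$. This is precisely where the three cases of $\Lambda_\mu$ emerge: bounded when $\tau_+(\s,\mu)>2\s-1$, of logarithmic order $\ln(1/|x|)$ when $\tau_+(\s,\mu)=2\s-1$, and of order $|x|^{\tau_++1-2\s}$ when $\tau_+(\s,\mu)<2\s-1$.

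The main technical obstacle is the symmetrization in the inner region $A_x$: a naive $L^\infty$ estimate on the integrand there would only give $|x|^{\tau_+-2\s}$, which is strictly worse than the claimed $\Lambda_\mu(x)$ and would be insufficient for the forthcoming distributional identity (\ref{1.2}). Extracting the sharp exponent $|x|^{\tau_++1-2\s}$ requires both the odd-symmetry cancellation of the leading linear-in-$\xi$ term against the constant part of the weight and the careful use of the first-order Taylor expansion of $|z|^{\tau_+}$ around $x$. A related subtlety is that the principal-value prescription in (\ref{L}) excludes $B_\eps(0)$ rather than $B_\eps(x)$, so the Lebesgue integrability at $z=x$ (which genuinely fails for $\s\ge 1/2$ without cancellation) must be secured by the same symmetrization argument. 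The critical subcase $\tau_+(\s,\mu)=2\s-1$ contributes only the logarithmic factor $1+(-\ln|x|)_+$ from the elementary integral $\int_{2|x|}^R r^{-1}\,dr$ and introduces no further conceptual difficulty.
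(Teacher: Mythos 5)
Your argument is correct and reaches the same estimate, but via a genuinely different decomposition than the paper. The paper does not split space around $x$; instead it performs a \emph{global algebraic symmetrization}: replacing $z$ by $x+z$ and $x-z$ and averaging, it writes $\tfrac{2}{C_{N,\s}}(-\Delta)^\s_{\Gamma_\mu}\xi(x)=\Delta_1(x)+\Delta_2(x)$, where $\Delta_1$ carries the second difference $2\xi(x)-\xi(x+z)-\xi(x-z)$ against the full weight $\Gamma_\mu(x+z)$, and $\Delta_2$ carries the first difference $\xi(x)-\xi(x+z)$ against the first difference $\Gamma_\mu(x+z)-\Gamma_\mu(x-z)$. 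Each of $\Delta_1,\Delta_2$ is then estimated for all $z$ at once by rescaling $z\mapsto z/|x|$ and invoking the auxiliary bounds (\ref{eq:eta-gamma-est-1})--(\ref{eq:eta-gamma-est-2}); the $\Lambda_\mu$ threshold $2\s-1$ comes out of $\Delta_2$, and $\Delta_1$ yields the strictly better threshold $2\s-2$. Your spatial split at $|z-x|=|x|/2$ plus Taylor expansion of $\xi$ and of $|\cdot|^{\tau_+}$ around $x$ is the ``localized'' counterpart: the paper's $\Delta_2$ corresponds to your cross term $-\nabla\xi(x)\cdot y$ against $\tau_+|x|^{\tau_+-2}\,x\cdot y$, and the odd-symmetry cancellation you invoke is exactly what makes $\Delta_1$ and $\Delta_2$ absolutely convergent in the paper. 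Your approach trades the paper's cleaner global identity for more explicit shell-by-shell bookkeeping, but arrives at the same sharp exponent. Two minor imprecisions that do not affect the outcome: (i) the sentence ``the $|x|^{-N-2\s}$ estimate is automatic once $(-\Delta)^\s_{\Gamma_\mu}\xi$ is shown to be locally bounded'' is too strong in the case $\tau_+(\s,\mu)<2\s-1$, where the operator is not locally bounded near $0$; what you actually need (and what your argument delivers) is $\Lambda_\mu(x)\lesssim|x|^{-N-2\s}$ for $|x|\le 2R$, which holds since $1-2\s+\tau_+>-N-2\s$. (ii) The claim that ``all remainder terms are bounded by $|x|^{\tau_++2-2\s}$'' in the inner region is not quite accurate: the pairing of $-\nabla\xi(x)\cdot y$ against the quadratic Taylor remainder of $|x+y|^{\tau_+}$ also contributes at order $|x|^{\tau_++1-2\s}$, the same as your ``dominant'' term; a cleaner bookkeeping is to symmetrize $y\mapsto-y$ inside $A_x$ first (as the paper does globally), which bounds the whole inner integrand by $C|x|^{\tau_+-1}|y|^{2-N-2\s}$ directly. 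Neither point changes the final estimate.
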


\begin{proof}
In the following, we put  $S_r:= \partial B_r$ and $B_r^c:= \R^N \setminus B_r$ for $r>0$.
We first note the following facts. If $\eta, \tau>-N$, then we have
\begin{equation}
  \label{eq:eta-gamma-est-1}
A_{\eta,\tau}(r):= \int_{B_r \setminus B_2} |z|^{\eta} |z+e_1|^{\tau}\,dz \le \kappa_1(\eta,\tau)
\left \{
\begin{aligned}
&(1+r^{\eta+\tau+N})&&\ \text{if \,$\eta+\tau \not = -N$},\\
&(1+\log r) &&\ \text{if \,$\eta+\tau = -N$}\\
\end{aligned}
\right.
\end{equation}
for $r \ge 2$ with a constant $\kappa_1(\eta,\tau)$, since $\frac12|z|\leq |z+e_1|\leq 2|z|$ for $|z|\geq 2$. Moreover, if $\tau, \eta \in \R$ satisfy $\tau + \eta <-N$, we have that
\begin{equation}
  \label{eq:eta-gamma-est-2}
B_{\eta,\tau}(r):= \int_{B_r^c} |z|^{\eta} |z+e_1|^{\tau}\,dz \le \kappa_2(\eta,\tau)r^{\eta +\tau+N}<\infty    \quad \text{for \,$r \ge 2$}
\end{equation}
with a constant $\kappa_2(\eta,\tau)$.
%To prove (\ref{eq:eta-gamma-est-1}) and (\ref{eq:eta-gamma-est-2}), we note that for $\tau>-N$ and $r \not= 1$ we have
%$$\int_{S_r} |z-e_1|^\tau\,d\sigma(z)=\int_{S_r}\bigl(r^2 + 1 - 2 z  \cdot e_1  \bigr)^{\frac{\tau}{2}}d\sigma(z)= r^{N-1+ \frac{\tau}{2}} h_\tau(r)$$
%with $$ h_\tau(r):= \int_{S_1}\bigl(r + \frac{1}{r} - 2 z \cdot e_1 \bigr)^{\frac{\tau}{2}}\,dz.$$
%Moreover, there exists a constant $h^*(\tau) >0$ with
%$$h_\tau \le h^*(\tau)r^{\frac{\tau}{2}} \quad \text{for \,$r \ge 2$.}$$
%Consequently, if $\eta, \tau>-N$, we find that, for $r \ge 2$,
%\begin{align*}
%A_{\eta,\tau}(r) = \int_{2}^r s^\eta \int_{S_s} |z-e_1|^\tau\,d\sigma(z)ds &\le h^*(\tau) \int_{2}^r s^{\eta+\tau+ N-1}\,ds\\
%&\le\left \{ \begin{aligned}
%   &\kappa_1(\eta,\tau) (1+r^{\eta+\tau+N}), &&\ \text{if $\eta+\tau \not = -N$},\\
 %  &\kappa_1(\eta,\tau) \log r, &&\ \text{if $\eta+\tau = -N$}
 % \end{aligned}
%\right.
%\end{align*}
%with a constant $\kappa_1(\eta,\tau)$. Similarly, if $\tau, \eta \in \R$ satisfy $\tau + \eta <-N$, we find that
%$$ B_{\eta,\tau}(r) = \int_{r}^\infty s^\eta \int_{S_s} |z-e_1|^\tau\,d\sigma(z)ds \le h^*(\tau) \int_{r}^\infty s^{\eta+\tau+ N-1}\,ds = \kappa_2(\eta,\tau)r^{\eta+\tau+N}$$
%for $r \ge2 $ with $\kappa_2(\eta,\tau):= -\frac{h^*(\tau)}{\eta+\tau+N}$. %Hence (\ref{eq:eta-gamma-est-1}) and (\ref{eq:eta-gamma-est-2}) are true.

We now turn to the claim of the proposition. In the following, the constant $c>0$ depends only on $\xi$ and may change from line to line. We first note the following scaling property of the operator $(-\Delta)^\s_{\Gamma_\mu}$. If $\xi \in C^2_c(\R^N)$, $R>0$ and $\xi_R \in C^2_c(\R^N)$ is given by
$\xi(x)= \xi(Rx)$, then $[(-\Delta)^\s_{\Gamma_\mu} \xi_R](x)= R^{2\s-\tau_+(\s,\mu)} = [(-\Delta)^\s_{\Gamma_\mu} \xi](Rx)$ for $x \in \R^N \setminus \{0\}$.
As a consequence, we may assume in the following that $\xi \in C^2_c(\R^N)$ is supported in $B_{\frac{1}{4}}(0)$.

For $x \in \R^N$ with $|x| \ge \frac{1}{2}$, then we have
\begin{equation}
  \label{eq:outside-estimate}
|(-\Delta)^\s_{\Gamma_\mu} \xi(x)| \le C_{N,\s} \int_{B_\frac{1}{4}} \frac{|\xi(y)|\Gamma_{\Gamma_\mu}(y) }{|x-y|^{N+2\s}}  dy
\le  \frac{c}{|x|^{N+2\s}} \int_{B_\frac{1}{4}} |\xi(y)| \Gamma_\mu(y) dy \le  \frac{c}{|x|^{N+2\s}}.
\end{equation}
We now consider $x \in \R^N$ with $|x| < \frac{1}{2}$.  By translation and suitable addition and subtraction of terms, we see that
$$
2 \frac{(-\Delta)^\s_{\Gamma_\mu} \xi(x)}{C_{N,\s}} = \Delta_1(x)+\Delta_2(x)
$$
with
$$
\Delta_1(x):= \int_{\R^N} \frac{2\xi(x)-\xi(x+z)-\xi(x-z)}{|z|^{N+2\s}}\Gamma_\mu(x+z)\,dz
$$
and
$$
\Delta_2(x):= \int_{\R^N} \frac{(\xi(x)-\xi(x+z))(\Gamma(x+z)-\Gamma(x-z))}{|z|^{N+2\s}}\,dz.
$$
Since $|2\xi(x)-\xi(x+z)-\xi(x-z)| \le c \min \{1, |z|^2\}$ for $x,z \in \R^N$, we have that, using (\ref{eq:eta-gamma-est-1}) and (\ref{eq:eta-gamma-est-2}),
\begin{align}
|\Delta_1(x)|&\le c \int_{|z|<1} |z|^{-N-2\s+2}|x+z|^{\tau_+(\s,\mu)} dz +  c \int_{|z| \ge 1} |z|^{-N-2\s}|x+z|^{\tau_+(\s,\mu)} dz \nonumber\\
&= c |x|^{2-2\s+\tau_+(\s,\mu)}\int_{|z|<\frac{1}{|x|}} |z|^{-N-2\s+2}|z + e_1|^{\tau_+(\s,\mu)} dz\nonumber \\
&\quad+ c |x|^{\tau_+(\s,\mu)-2\s}\int_{|z|>\frac{1}{|x|}} |z|^{-N-2\s}|z + e_1|^{\tau_+(\s,\mu)} dz \nonumber\\
&= c\Bigl[c_1(\s,\mu) +  |x|^{2-2\s+\tau_+(\s,\mu)} A_{\eta_1,\tau}(\frac{1}{|x|}) +  |x|^{\tau_+(\s,\mu)-2\s}
B_{\eta_2,\tau}(\frac{1}{|x|})\Bigr] \label{inequality-Delta-1}
\end{align}
with $\eta_1 = 2-N-2\s$, $\eta_2 = -N -2\s$, $\tau= \tau_+(\s,\mu)$ and
$$
c_1(\s,\mu)= \int_{|z|< 2}|z|^{-N-2\s}|z + e_1|^{\tau_+(\s,\mu)} dz.
$$
By (\ref{eq:eta-gamma-est-1}) and (\ref{eq:eta-gamma-est-2}), we thus conclude that
\begin{equation}
  \label{eq:Delta-1-est}
|\Delta_1(x)| \le  c_2(\s,\mu)\left\{\arraycolsep=1pt
\begin{array}{lll}
1\quad
   &{\rm if}\ \, \tau_+(\s,\mu)>2\s-2,\\[1mm]
    |x|^{2-2\s+\tau_+(\s,\mu)}  \quad
   &{\rm if}\ \, \tau_+(\s,\mu)<2\s-2,\\[1mm]
 \phantom{   }
 (-\ln|x|) \quad &{\rm   if } \ \,  \tau_+(\s,\mu) =2\s-2.
 \end{array}
 \right.
\end{equation}
To estimate $|\Delta_2(x)|$, we note that
\begin{align*}
|\Gamma(x+z)&-\Gamma(x-z))|=\bigl| |x+z|^{\tau_+(\s,\mu)} - |x-z|^{\tau_+(\s,\mu)}\bigr|\\
&\le c \min \Bigl \{|x+z|^{\tau_+(\s,\mu)} + |x-z|^{\tau_+(\s,\mu)}, |z| \Bigl(|x+z|^{\tau_+(\s,\mu)-1} + |x-z|^{\tau_+(\s,\mu)-1}\Bigr)\Bigr\}
\end{align*}
and $|\xi(x)-\xi(x+z)| \le c \min \{1,|z|\}$, implying that
\begin{align*}
|\Delta_2(x)| &\le c \int_{|z|\le 1} |z|^{-N-2\s+2} \Bigl(|x+z|^{\tau_+(\s,\mu)-1} + |x-z|^{\tau_+(\s,\mu)-1}\Bigr)dz\\
&\quad+ c\int_{|z| > 1}|z|^{-N-2\s} \Bigl(|x+z|^{\tau_+(\s,\mu)} + |x-z|^{\tau_+(\s,\mu)}\Bigr)dz.
\end{align*}
Following the inequalities in \eqref{inequality-Delta-1} with $\tau_+(\s,\mu)-1$ in place of $\tau_+(\s,\mu)$ and using
(\ref{eq:eta-gamma-est-1}) and (\ref{eq:eta-gamma-est-2}) again, we find that
\begin{equation}
  \label{eq:Delta-2-est}
|\Delta_2(x)| \le  c(\s,\mu)\left\{\arraycolsep=1pt
\begin{array}{lll}
1\quad
   &{\rm if}\ \, \tau_+(\s,\mu)>2\s-1,\\[1mm]
    |x|^{1-2\s+\tau_+(\s,\mu)}  \quad
   &{\rm if}\ \, \tau_+(\s,\mu)<2\s-1,\\[1mm]
 \phantom{   }
 (1-\ln|x|) \quad &{\rm   if } \ \, \tau_+(\s,\mu) =2\s-1
 \end{array}
 \right.
\end{equation}
with a constant $c(\s,\mu)>0$. Combining (\ref{eq:outside-estimate}),~(\ref{eq:Delta-1-est}) and (\ref{eq:Delta-2-est}), we thus obtain (\ref{1.3-proof}).
\end{proof}

We remark that  an inspection of the estimates in above proof shows that for fixed $\s \in (0,1)$, $\xi \in C^2_c(\R^N)$ and $\tau < 2 \s-1$, the constant $c(\s,\mu,\xi)$ can by chosen uniformly for any $\mu \ge \mu_0$ with $\tau_+(\s,\mu) \le \tau$.

\begin{lemma}
\label{relationship-operators-0}
We have 
\begin{equation}
  \label{eq:adjoint-op-pointwise}
[\cL^\s_\mu (\Gamma_\mu v)](x) = (-\Delta)^\s_{\Gamma_\mu} v(x) \qquad \text{for $v \in C^2_c(\R^N)$, $x \in \R^N \setminus \{0\}$}
\end{equation}
and
\begin{equation}
  \label{eq:adjoint-op}
\int_{\R^N} v \cL^\s_\mu u\,d\gamma_\mu = \int_{\R^N} u (-\Delta)^\s_{\Gamma_\mu} v\,dx \qquad \text{for $v \in C^2_c(\R^N)$, $u \in C^2(\R^N) \cap L^1(\R^N, \frac{dx}{1+|x|^{N+2\s}})$.}
\end{equation}
Here we recall that we have set $d\gamma_\mu = \Gamma_\mu dx$. 
\end{lemma}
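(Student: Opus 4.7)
The proof splits into the pointwise identity (\ref{eq:adjoint-op-pointwise}) and the integral identity (\ref{eq:adjoint-op}). For the first, I fix $x \in \R^N \setminus \{0\}$, where both $v$ and $\Gamma_\mu$ are $C^2$, and apply the algebraic decomposition
\[
\Gamma_\mu(x)v(x) - \Gamma_\mu(z)v(z) = v(x)\bigl[\Gamma_\mu(x)-\Gamma_\mu(z)\bigr] + \Gamma_\mu(z)\bigl[v(x)-v(z)\bigr]
\]
inside the principal-value integral defining $(-\Delta)^\s(\Gamma_\mu v)(x)$. Each of the two resulting principal values converges separately thanks to the $C^2$ regularity at $x$, and their decay at infinity follows from the compact support of $v$ combined with $\tau_+(\s,\mu) < 2\s$. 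This yields the product-type identity $(-\Delta)^\s(\Gamma_\mu v)(x) = v(x)(-\Delta)^\s\Gamma_\mu(x) + (-\Delta)^\s_{\Gamma_\mu}v(x)$. Proposition~\ref{proposition-A} supplies $(-\Delta)^\s\Gamma_\mu(x) = -\mu\Gamma_\mu(x)/|x|^{2\s}$ on $\R^N \setminus \{0\}$, and rearranging delivers (\ref{eq:adjoint-op-pointwise}).

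For the integral identity, multiplying (\ref{eq:adjoint-op-pointwise}) by $u$ and integrating reduces matters to the standard duality
\[
\int_{\R^N} u\,(-\Delta)^\s(\Gamma_\mu v)\,dx = \int_{\R^N} (\Gamma_\mu v)\,(-\Delta)^\s u\,dx,
\]
which, combined with $\cL^\s_\mu u\, d\gamma_\mu = \Gamma_\mu (-\Delta)^\s u\,dx + \mu\Gamma_\mu u/|x|^{2\s}\,dx$, gives (\ref{eq:adjoint-op}). The obstacle is that $\Gamma_\mu v$ is in general not smooth at the origin, carrying a $|x|^{\tau_+(\s,\mu)}$ singularity there when $\tau_+(\s,\mu) < 0$, so this duality is not immediate from the textbook statement. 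I handle this via a cutoff $\chi_\eps \in C^\infty(\R^N)$ vanishing on $B_{\eps/2}$ and equal to $1$ outside $B_\eps$, and set $v_\eps = v\chi_\eps$. Then $\Gamma_\mu v_\eps \in C^2_c(\R^N \setminus \{0\}) \subset C^2_c(\R^N)$, so the classical duality does apply to $\Gamma_\mu v_\eps$; combined with the already-proved pointwise identity applied to $v_\eps$, this yields
\[
\int_{\R^N} u\,(-\Delta)^\s_{\Gamma_\mu}v_\eps\,dx = \int_{\R^N} v_\eps\,\cL^\s_\mu u\,d\gamma_\mu \qquad \text{for every $\eps > 0$.}
\]

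It remains to pass to the limit $\eps \to 0$. The right-hand side is controlled by dominated convergence with dominating function $|v||\cL^\s_\mu u|\Gamma_\mu$, which is integrable near the origin because $u$ is locally bounded and $\Gamma_\mu/|x|^{2\s}$ is locally integrable (since $\tau_+(\s,\mu) > 2\s-N$). For the left-hand side one must show $\int u\,(-\Delta)^\s_{\Gamma_\mu}(v-v_\eps)\,dx \to 0$; I split the $x$-integral into $\{|x|>2\eps\}$ and $\{|x|\le 2\eps\}$. On the outer region, using that $v-v_\eps$ is supported in $B_\eps$ with $\|v-v_\eps\|_\infty \le \|v\|_\infty$ directly bounds $|(-\Delta)^\s_{\Gamma_\mu}(v-v_\eps)(x)| \le C\eps^{\tau_+(\s,\mu)+N}|x|^{-N-2\s}$. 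On the inner region, Proposition~\ref{lambda-estimate} controls $(-\Delta)^\s_{\Gamma_\mu}v$, while $(-\Delta)^\s_{\Gamma_\mu}v_\eps$ is estimated directly from its defining integral, using only the $L^\infty$-bound on $v_\eps$ and the support constraint $|z|\ge \eps/2$ on $\Gamma_\mu v_\eps$ (the constant from Proposition~\ref{lambda-estimate} would blow up with the $C^2$-norm of $v_\eps$ and is unusable here). Both estimates integrated against $|u|$ yield contributions of order $\eps^{\tau_+(\s,\mu)+N-2\s}\to 0$, since $\tau_+(\s,\mu) \ge (2\s-N)/2 > 2\s-N$. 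The main obstacle is precisely this uniform estimate of $(-\Delta)^\s_{\Gamma_\mu}v_\eps$ in the transitional annulus $\{|x|\sim \eps\}$, where the derivatives of $v_\eps$ blow up like $\eps^{-1}$ and $\eps^{-2}$ and must be bypassed by exploiting the smallness of the support.
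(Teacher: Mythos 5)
Your proof of the pointwise identity \eqref{eq:adjoint-op-pointwise} is essentially the same as the paper's: both use the product decomposition $\Gamma_\mu(x)v(x)-\Gamma_\mu(z)v(z)=v(x)[\Gamma_\mu(x)-\Gamma_\mu(z)]+\Gamma_\mu(z)[v(x)-v(z)]$ inside the singular integral together with the fact $\cL^\s_\mu\Gamma_\mu\equiv 0$ on $\R^N\setminus\{0\}$ from Proposition~\ref{proposition-A}.

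For the integral identity \eqref{eq:adjoint-op}, you take a genuinely different route. The paper sets $\psi=\Gamma_\mu v$ and passes directly through the symmetric bilinear form,
\[
\int_{\R^N}\psi\,\cL^\s_\mu u\,dx=\int_{\R^N}\frac{\mu}{|x|^{2\s}}u\psi\,dx+\frac{C_{N,\s}}{2}\int_{\R^N}\int_{\R^N}\frac{[u(x)-u(y)][\psi(x)-\psi(y)]}{|x-y|^{N+2\s}}\,dydx=\int_{\R^N}u\,\cL^\s_\mu\psi\,dx,
\]
and then invokes Proposition~\ref{lambda-estimate} (via the pointwise identity) to justify that all integrals are finite. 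You instead cut off $v$ near the origin at scale $\eps$ so that $\Gamma_\mu v_\eps\in C^2_c(\R^N)$, apply the textbook fractional integration by parts to this regularized function, and pass to the limit $\eps\to 0$ with explicit estimates. Your approach is more conservative and makes the convergence issue at the origin fully explicit, at the cost of a longer argument; the paper's is shorter but leaves the absolute convergence of the double integral (needed for the symmetry/Fubini step) as an implicit consequence of Proposition~\ref{lambda-estimate}.

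One imprecision: you claim to estimate $(-\Delta)^\s_{\Gamma_\mu}v_\eps$ on the transitional annulus $\{|x|\sim\eps\}$ ``using only the $L^\infty$-bound on $v_\eps$ and the support constraint $|z|\ge\eps/2$.'' This cannot be literally true. For $x$ in the annulus the kernel has a nonintegrable singularity at $z=x$, and $|v_\eps(x)-v_\eps(z)|\le 2\|v\|_\infty$ is not enough to tame it; you must use the Lipschitz bound $\|\nabla v_\eps\|_\infty\lesssim\eps^{-1}$ to see that the contribution from $|z-x|\lesssim\eps$ is of order $\eps^{-1}\cdot\eps^{\tau_+(\s,\mu)}\cdot\eps^{1-2\s}=\eps^{\tau_+(\s,\mu)-2\s}$, matching the contributions you get from the far field and from $z$ near the origin. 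That $\eps^{-1}$ is used, not bypassed; it is merely \emph{absorbed}, since the $x$-integration over the annulus contributes a volume factor $\eps^N$ and $\tau_+(\s,\mu)+N-2\s>0$. With the claim corrected to ``the Lipschitz bound on $v_\eps$,'' your argument closes.
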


\begin{proof}
Let $\psi= \Gamma_\mu v$. For $x \in \R^N \setminus \{0\}$, we then find, recalling  the fact that $\cL^\s_\mu \Gamma_\mu = 0$ on $\R^N \setminus \{0\}$, that
\begin{equation}
[\cL^\s_\mu \psi](x)  = v(x)[\cL^\s_\mu \Gamma_\mu](x)+[(-\Delta)^\s (v \Gamma_\mu) - v(-\Delta)^\s \Gamma_\mu](x)=  (-\Delta)^\s_{\Gamma_\mu} v(x). \label{product-prelim}
\end{equation}
Consequently,
\begin{align*}
 \int_{\R^N} v \cL^\s_\mu u\,d\gamma_\mu = \int_{\R^N} \psi \cL^\s_\mu u \, dx&=\int_{\R^N}  \frac{\mu}{|x|^{2\s}} u \psi\,dx + \frac{C_{N,\s}}2 \int_{\R^N} \int_{\R^N}\frac{[u(x)-u(y)][\psi(x)-\psi(y)]}{|x-y|^{N+2\s}}dy dx\\
&=  \int_{\R^N} u \cL^\s_\mu \psi\,dx = \int_{\R^N} u [(-\Delta)^\s_{\Gamma_\mu} v]\,dx,
\end{align*}
where all integrals are well defined in Lebesgue sense as a consequence of Lemma~\ref{lambda-estimate}. The claim follows.
\end{proof}

Before we may complete the proof of Theorem~\ref{Theorem B}, we need another integral estimate. 

\begin{lemma}
\label{h-estimate-lemma}
Let $\tau > -N$, and consider the function 
$$
h: (0,1) \to \R, \qquad h(t) = \int_{B_{t} } \frac{1-|z|^{\tau}}{|e_1-z|^{N+2\s}}  dz 
$$
Then there exists a constant $c=c(\tau,\s,N)>0$ with 
\begin{equation}
  \label{eq:h-general-estimate-1}
|h(t)| \le c(t^N+ t^{N+\tau}) \qquad \text{for $t \in (0,\frac{1}{2}]$}
\end{equation}
and, 
\begin{equation}
  \label{eq:h-general-estimate-2}
|h(t)| \le \left\{
  \begin{aligned}
&c &&\quad\ \text{if $\s \in (0,\frac{1}{2})$,}\\ 
&|\ln (1-t)| &&\quad\ \text{if $\s = \frac{1}{2}$,}\\     
&(1-t)^{1-2\s} &&\quad\ \text{if $\s \not = \frac{1}{2}$}\\ 
  \end{aligned}
\right.
\qquad \text{for $t \in [\frac{1}{2},1)$.}
 \end{equation}
\end{lemma}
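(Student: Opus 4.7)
The plan is to split the argument into two regimes, $t \in (0,\frac{1}{2}]$ and $t \in [\frac{1}{2},1)$. In each case the only singularity of the integrand lies at $z = e_1$, and the crucial feature is that the numerator $1-|z|^\tau$ vanishes there.

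For $t \in (0,\frac{1}{2}]$ the argument is immediate: on $B_t$ one has $|e_1-z| \ge 1-t \ge \frac{1}{2}$, so $|e_1-z|^{-(N+2\s)}$ is bounded, and therefore
\[
|h(t)| \le c \int_{B_t}\bigl(1+|z|^\tau\bigr)\,dz = c\bigl(t^N + t^{N+\tau}\bigr),
\]
where the computation $\int_{B_t}|z|^\tau\,dz = c\,t^{N+\tau}$ uses $\tau>-N$. This gives \eqref{eq:h-general-estimate-1}.

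For $t \in [\frac{1}{2},1)$ I would split $h(t)=h_1(t)+h_2(t)$, where $h_1$ is the integral over $B_t\setminus B_{1/4}(e_1)$ and $h_2$ the integral over $B_t\cap B_{1/4}(e_1)$. On the first set $|e_1-z|\ge \frac{1}{4}$, so $|h_1(t)|\le c\int_{B_1}(1+|z|^\tau)\,dz<\infty$ uniformly in $t$. On $B_{1/4}(e_1)$ one has $|z|\in [\tfrac{3}{4},\tfrac{5}{4}]$, so the map $r\mapsto r^\tau$ is Lipschitz there with constant depending only on $\tau$, giving
\[
\bigl|1-|z|^\tau\bigr|\le c\,\bigl||z|-1\bigr|\le c\,|z-e_1|
\]
by the reverse triangle inequality. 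Hence the integrand is bounded by $c|z-e_1|^{1-N-2\s}$, and since $z\in B_t$ forces $|z-e_1|\ge 1-t$, polar coordinates around $e_1$ (using that each spherical cross-section has surface measure $\le cr^{N-1}$) yield
\[
|h_2(t)|\le c\int_{1-t}^{1/4} r^{-2\s}\,dr.
\]

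The three cases in \eqref{eq:h-general-estimate-2} then follow by evaluating this one-dimensional integral: for $2\s<1$ it stays bounded, for $2\s=1$ it equals $\ln\tfrac{1}{4(1-t)}$ and so is controlled by $c|\ln(1-t)|$, and for $2\s>1$ it is a multiple of $(1-t)^{1-2\s}-(1/4)^{1-2\s}\le c(1-t)^{1-2\s}$. No substantial obstacle is present; the only point requiring some care is the Lipschitz estimate for $r\mapsto r^\tau$ on $[\tfrac{3}{4},\tfrac{5}{4}]$, which extracts the gain of one power of $|z-e_1|$ from the numerator and converts a non-integrable kernel into one of the three types above.
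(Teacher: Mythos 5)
Your proof is correct and uses essentially the same approach as the paper: on $t\le\frac12$ bound the kernel by a constant and integrate $1+|z|^\tau$; on $t\ge\frac12$ exploit the Lipschitz bound $|1-|z|^\tau|\le c\,|1-|z||\le c\,|e_1-z|$ near the sphere to gain one power and reduce to the one-dimensional integral $\int_{1-t} r^{-2\s}\,dr$. The only difference is cosmetic: the paper splits the domain as $B_{1/2}\cup(B_t\setminus B_{1/2})$, while you split as $(B_t\setminus B_{1/4}(e_1))\cup(B_t\cap B_{1/4}(e_1))$; both lead to the same kernel estimate and the same three cases.
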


\begin{proof}
Estimate (\ref{eq:h-general-estimate-1}) follows easily from the definition of $h$. To see (\ref{eq:h-general-estimate-2}), we let $c>0$ denote constants which only depend on $N$, $\s$ and $\tau$. For $z \in B_1 \setminus B_{\frac{1}{2}}$, we then have 
$$
\bigl|1-|z|^\tau \bigr|\le c (1-|z|) \le c |e_1-z|,
$$
which implies that, for $t \in (\frac{1}{2},1)$, 
$$
|h(t)|\le \Bigl| \int_{B_{\frac{1}{2}}} \frac{1-|z|^{\tau}}{|e_1-z|^{N+2\s}}  dz 
\Bigr|+
\Bigl|\int_{B_{t}\setminus B_{\frac{1}{2}}} \frac{1-|z|^{\tau}}{|e_1-z|^{N+2\s}}  dz\Bigr|\\ 
\le |h(\frac{1}{2})| +  c \int_{B_{t}}|e_1-z|^{1-N-2\s}
dz,
$$
where 
$$
\int_{B_{t}}|e_1-z|^{1-N-2\s} \le 
\int_{B_2 \setminus B_{1-t}}|y|^{1-N-2\s}\,dy \le 
\left\{
  \begin{aligned}
&c &&\quad\ \text{if $\s \in (0,\frac{1}{2})$,}\\ 
&|\ln (1-t)| &&\quad\ \text{if $\s = \frac{1}{2}$,}\\     
&(1-t)^{1-2\s} &&\quad\ \text{if $\s \not = \frac{1}{2}$}.\\ 
  \end{aligned}
\right.
$$
The claim follows.
\end{proof}

We are now in a position to prove Theorem~\ref{Theorem B}.

\begin{proof}[Proof of Theorem~\ref{Theorem B}.] 
Let $u=\Phi_\mu$ and  $\xi\in C^2_c(\R^N)$ such that 
supp$\,\xi\subset B_R$ with $R>0$. Moreover, put $\psi=\xi\Gamma_\mu$, and let $\eps\in(0,\, \frac14)$. Then we have
\begin{align}
0=&\int_{\R^N\setminus B_\epsilon } [\cL^\s_\mu u]\psi\, dx =\int_{\R^N\setminus B_\epsilon } (-\Delta)^\s u \psi\,dx+\int_{\R^N\setminus B_\epsilon }  \frac{\mu}{|x|^{2\s}} u \psi\,dx\nonumber\\ =&\int_{\R^N\setminus B_\epsilon }  \frac{\mu}{|x|^{2\s}} u \psi\,dx+ \frac{C_{N,\s}}2 \int_{\R^N\setminus B_\epsilon } \int_{\R^N\setminus B_\epsilon }\frac{[u(x)-u(y)][\psi(x)-\psi(y)]}{|x-y|^{N+2\s}}dy dx\nonumber\\
&+  C_{N,\s} \int_{\R^N\setminus B_\epsilon } \int_{B_\epsilon }\frac{u(x)-u(y)}{|x-y|^{N+2\s}}\psi(x) dy dx\label{2.0} \\
   =&  \int_{\R^N\setminus B_\epsilon } u \cL^\s_\mu \psi\,dx +  C_{N,\s}  \int_{\R^N\setminus B_\epsilon } \int_{B_\epsilon }\frac{u(x)-u(y)}{|x-y|^{N+2\s}}\psi(x) dy dx -   C_{N,\s} \int_{\R^N\setminus B_\epsilon } \int_{B_\epsilon }\frac{\psi(x)-\psi(y)}{|x-y|^{N+2\s}}u(x) dy dx,\nonumber
\end{align}
which by (\ref{eq:adjoint-op-pointwise}) implies that
\begin{align}
&\int_{\R^N\setminus B_\epsilon } u (-\Delta)^\s_{\Gamma_\mu} \xi\,dx =
\int_{\R^N\setminus B_\epsilon } u \cL^\s_\mu \psi\,dx\nonumber \\
=\ \ &C_{N,\s} \int_{\R^N\setminus B_\epsilon } \int_{B_\epsilon }\frac{\psi(x)-\psi(y)}{|x-y|^{N+2\s}}u(x) dy dx-C_{N,\s} \int_{\R^N\setminus B_\epsilon } \int_{B_\epsilon }\frac{u(x)-u(y)}{|x-y|^{N+2\s}}\psi(x) dy dx. \label{2.3.1}
\end{align}
When $\mu>\mu_0$,
\begin{align*}
 &\int_{\R^N\setminus B_\epsilon } \int_{B_\epsilon }\frac{u(x)-u(y)}{|x-y|^{N+2\s}}\psi(x) dy dx =
\int_{\R^N\setminus B_\epsilon } |x|^{\tau_+(\s,\mu)}\xi(x) \int_{B_\epsilon }\frac{|x|^{\tau_-(\s,\mu)}-|y|^{\tau_-(\s,\mu)}}{|x-y|^{N+2\s}}  dy dx \\
   &= \int_{\R^N\setminus B_\epsilon }\frac{\xi(x)}{|x|^N} \int_{B_{\frac\epsilon{|x|}} }\frac{1-|z|^{\tau_-(\s,\mu)}}{|e_1-z|^{N+2\s}}  dz dx
   \\   &= \int_{\R^N\setminus B_{\sqrt{\epsilon}} }\frac{\xi(x)}{|x|^N} \int_{B_{\frac\epsilon{|x|}} }\frac{1-|z|^{\tau_-(\s,\mu)}}{|e_1-z|^{N+2\s}}  dz dx+ \int_{B_{\sqrt{\epsilon}} \setminus B_\epsilon } \frac{\xi(x)}{|x|^N}\int_{B_{\frac\epsilon{|x|}} }\frac{1-|z|^{\tau_-(\s,\mu)}}{|e_1-z|^{N+2\s}}  dz dx,
\end{align*}
where $e_1=(1,0,\cdots,0)\in\R^N$. Setting 
$h_1(t):= \int_{B_{t} } \frac{1-|z|^{\tau_-(\s,\mu)}}{|e_1-z|^{N+2\s}}  dz $, we now deduce from (\ref{eq:h-general-estimate-1}) that 
\begin{equation}
  \label{eq:h_1-pointwise-est}
|h_1(t)|\le c\,t^{N+\tau_-(\s,\mu)} \qquad \text{for any $t\le \sqrt{\epsilon}$  with a constant $c>0$ independent of $\epsilon \in (0,\frac{1}{4})$.}   
\end{equation}
Therefore,
\begin{align*}
 &\Big| \int_{\R^N\setminus B_{\sqrt\epsilon} } \frac{\xi(x)}{|x|^N} \int_{B_{\frac\epsilon{|x|}} }\frac{1-|z|^{\tau_-(\s,\mu)}}{|e_x-z|^{N+2\s}}  dz dx\Big|
\le \int_{\R^N\setminus B_{\sqrt\epsilon} }  \frac{|\xi(x)|}{|x|^{N}} \bigl|h_1(\frac\epsilon{|x|})\bigr| dx \\
    &\le c\,\omega_{N-1}\, \|\xi\|_{L^\infty(\R^N)}\epsilon^{ N+\tau_-(\s,\mu)}\, 
\int_{\sqrt{\epsilon}}^{R} t^{-N-1-\tau_-(\s,\mu)} dt \leq O(\epsilon^{ \frac{N+\tau_-(\s,\mu)}{2}}) \:\to\:  0\quad {\rm as}\ \ \epsilon\to0
\end{align*}
and
\begin{align}
&\int_{ B_{\sqrt\epsilon} \setminus B_{\epsilon} }\frac{\xi(x)}{|x|^N} \int_{B_{\frac\epsilon{|x|}} }\frac{1-|z|^{\tau_-(\s,\mu)}}{|e_x-z|^{N+2\s}}  dz dx
= [\xi(0) +O(\sqrt\epsilon)]\int_{B_{\sqrt\epsilon} \setminus B_{ \epsilon} } h_1(\frac\epsilon{|x|})|x|^{-N}dx \nonumber\\
&= \omega_{N-1}[\xi(0) +O(\sqrt\epsilon)] \int_\epsilon^{\sqrt\epsilon} h_1(\frac{\epsilon}{t}) t^{-1} dt = \omega_{N-1}[\xi(0) +O(\sqrt\epsilon)] 
\int^1_{\sqrt{\eps}} \frac{h_1(s)}{s} ds\\
& \to\: \omega_{N-1} \xi(0) \int^1_0 \frac{h_1(s)}{s} ds
    \quad {\rm as}\ \ \epsilon\to 0, \label{h-1-convergence}
\end{align}
where $N+\tau_-(\s,\mu)>0$. Hence 
\begin{equation}\label{2.1}
\lim_{\epsilon\to0}\int_{\R^N\setminus B_\epsilon } \int_{B_\epsilon }\frac{u(x)-u(y)}{|x-y|^{N+2\s}}\psi(x) dy dx=\omega_{N-1} \xi(0) \int^1_0 \frac{h_1(s)}{s} ds.
\end{equation}
Now we deal with the last term in  (\ref{2.0}). By direct computation, we have that
\begin{align}
&\int_{\R^N\setminus B_\epsilon } \int_{B_\epsilon }\frac{\psi(x)-\psi(y)}{|x-y|^{N+2\s}}u(x) dy dx  = \int_{\R^N\setminus B_\epsilon } \int_{B_\epsilon }\frac{|x|^{\tau_+(\s,\mu)}-|y|^{\tau_+(\s,\mu)}}{|x-y|^{N+2\s}}|x|^{\tau_-(\s,\mu)}\xi(x) dy dx \nonumber
\\
&+\int_{\R^N\setminus B_\epsilon } \int_{B_\epsilon }\frac{\xi(x)-\xi(y)}{|x-y|^{N+2\s}}|x|^{\tau_-(\s,\mu)}|y|^{\tau_+(\s,\mu)} dy dx \nonumber\\
&=\int_{\R^N\setminus B_\epsilon }\frac{\xi(x)}{|x|^N} \int_{B_\frac{\epsilon}{|x|} }\frac{1-|z|^{\tau_+(\s,\mu)}}{|e_1-z|^{N+2\s}} dy dx
+\int_{\R^N\setminus B_\epsilon } \int_{B_\epsilon }\frac{\xi(x)-\xi(y)}{|x-y|^{N+2\s}}|x|^{\tau_-(\s,\mu)}|y|^{\tau_+(\s,\mu)} dy dx , \label{extra-eq-distr-id-proof}
\end{align}
where
\begin{align*}
\Big|\int_{\R^N\setminus B_\epsilon } \int_{B_\epsilon }&\frac{\xi(x)-\xi(y)}{|x-y|^{N+2\s}}|x|^{\tau_-(\s,\mu)}|y|^{\tau_+(\s,\mu)} dy dx\Big|
  \le \norm{\xi}_{C^2}\int_{\R^N\setminus B_\epsilon } \int_{B_\epsilon } \frac{|x|^{\tau_-(\s,\mu)}|y|^{\tau_+(\s,\mu)}}{|x-y|^{N+2\s-2}}dydx \\
&= \norm{\xi}_{C^2}\Bigl(\int_{\R^N\setminus B_{2\epsilon }} \int_{B_\epsilon }+\int_{  B_{2\epsilon }\setminus  B_{\epsilon }} \int_{B_\epsilon } \Bigr)\frac{|x|^{\tau_-(\s,\mu)}|y|^{\tau_+(\s,\mu)}}{|x-y|^{N+2\s-2}} dydx =  O(\epsilon^2)
\end{align*}
since
\begin{align*}
\int_{\R^N\setminus B_{2\epsilon }} \int_{B_\epsilon }\frac{|x|^{\tau_-(\s,\mu)}|y|^{\tau_+(\s,\mu)}}{|x-y|^{N+2\s-2}}dydx
&=  \int_{\R^N\setminus B_{2\epsilon }}|x|^{2-N} \int_{B_{\frac{\epsilon}{|x|}} }\frac{|z|^{\tau_+(\s,\mu)}}{|e_1-z|^{N+2\s-2}} dz dx  \\
   &\le  2^{N+2\s-2} \int_{\R^N\setminus B_{2\epsilon }}(\frac{\epsilon}{|x|})^{N+\tau_+(\s,\mu)} |x|^{2-N} dx =  O(\epsilon^2)
\end{align*}
and
\begin{align*}
\int_{ B_{2\epsilon} \setminus B_\epsilon} \int_{B_\epsilon}
\frac{|x|^{\tau_-(\s,\mu)}|y|^{\tau_+(\s,\mu)}}{|x-y|^{N+2\s-2}}dydx
&\le  \int_{ B_{2\epsilon} \setminus B_\epsilon} |x|^{2-N} \left(\int_{B_1}
|z|^{\tau_+(\s,\mu)}dz+\int_{B_{1/2}}\frac{dz}{|e_1-z|^{N+2\s-2}}\right) dx  \\
   &\le O(1) \int_{ B_{2\epsilon} \setminus B_\epsilon}  |x|^{2-N} dx = O(\epsilon^2).
\end{align*}
To compute the first term in (\ref{extra-eq-distr-id-proof}), we let 
$h_2(t):= \int_{B_{t}} \frac{1-|z|^{\tau_+(\s,\mu)}}{|e_1-z|^{N+2\s}}  dz $, and we note that 
\begin{equation}
  \label{eq:h-2-pointwise-estimate}
|h_2(t)|\le c\bigl(t^{N}+t^{N+\tau_+(\s,\mu)}\bigr) \qquad \text{for any $t\le \sqrt{\epsilon}$ with $c>0$ independent of $t$ and $\epsilon \in (0,\frac{1}{4})$}   
\end{equation}
by (\ref{eq:h-general-estimate-1}). Therefore 
$$
 \Big| \int_{\R^N\setminus B_{\sqrt\epsilon}}\frac{\xi(x)}{|x|^N} \int_{B_{\frac\epsilon{|x|}}}\frac{1-|z|^{\tau_+(\s,\mu)}}{|e_1-z|^{N+2\s}}  dz dx\Big|
 \le   \int_{\R^N\setminus B_{\sqrt\epsilon}} \bigl|h_2(\frac\epsilon{|x|})\bigr|\frac{\xi(x)}{|x|^N} dx \le O(\epsilon^N+\epsilon^{N+\tau_+(\s,\mu)} ) \:\to\: 0 \qquad \text{as $\epsilon\to 0$}
$$
and, similarly as in (\ref{h-1-convergence}), 
$$
 \int_{B_{\sqrt\epsilon}\setminus B_{\epsilon}}
\frac{\xi(x)}{|x|^N} \int_{B_{\frac\epsilon{|x|}}}\frac{1-|z|^{\tau_+(\s,\mu)}}{|e_1-z|^{N+2\s}}  dz dx
 \:\to\: \omega_{N-1}\xi(0)  \int^1_0 \frac{h_2(s)}{s} ds \qquad \text{as $\epsilon\to0.$}
$$
Consequently, 
\begin{equation}\label{2.2}
\lim_{\epsilon\to0}\int_{\R^N\setminus B_\epsilon} \int_{B_\epsilon}\frac{\psi(x)-\psi(y)}{|x-y|^{N+2\s}}u(x) dy dx=\omega_{N-1} \int^1_0 h_2(s) s^{-1} dt\, \xi(0).
\end{equation}

Combining (\ref{2.3.1}), (\ref{2.1}) and (\ref{2.2}), we conclude that 
$$
\frac{1}{C_{N,\s}}\int_{\R^N}\psi_\mu   (-\Delta)^\s_{\Gamma_\mu} \xi \, dx =  \omega_{N-1} \xi(0) \int^1_0 \frac{h_2(s)-h_1(s)}{s}ds= \omega_{N-1}\xi(0) \int^1_0 \int_{B_{t}} \frac{|z|^{\tau_-(\s,\mu)}-|z|^{\tau_+(\s,\mu)}}{|e_1-z|^{N+2\s}}  dzdt,
$$
so (\ref{1.2}) holds true for $\mu>\mu_0$.\\[0.2cm]

When $\mu=\mu_0$, we write 
\begin{align*}
&\int_{\R^N\setminus B_\epsilon} \int_{B_\epsilon}\frac{u(x)-u(y)}{|x-y|^{N+2\s}}\psi(x) dy dx\\ 
=&\int_{\R^N\setminus B_\epsilon} \int_{B_\epsilon}\frac{-|x|^{\tau_-(\s,\mu_0)}\ln|x|+|y|^{\tau_-(\s,\mu_0)}\ln|y|}{|x-y|^{N+2\s}} |x|^{\tau_+(\s,\mu_0)}\xi(x) dy dx \\
   =& \int_{\R^N\setminus B_\epsilon} \int_{B_{\frac\epsilon{|x|}}}\left(\frac{1-|z|^{\tau_-(\s,\mu_0)}}{|e_x-z|^{N+2\s}} |x|^{-N}\ln|x| + \frac{ |z|^{\tau_-(\s,\mu_0)}\ln|z|}{|e_x-z|^{N+2\s}} |x|^{-N} \right) \xi(x) dz dx
   \\  =& \left(\int_{\R^N\setminus B_{\sqrt{\epsilon}} } \int_{B_{\frac\epsilon{|x|}} }+ \int_{B_{\sqrt{\epsilon}} \setminus B_\epsilon } \int_{B_{\frac\epsilon{|x|}}(0)}\right)
    \frac{1-|z|^{\tau_-(\s,\mu_0)}}{|e_x-z|^{N+2\s}} \frac{(-\ln|x|)\xi(x)}{|x|^{N}}dz dx \\
&-
     \left(\int_{\R^N\setminus B_{\sqrt{\epsilon}} } \int_{B_{\frac\epsilon{|x|}} }+ \int_{B_{\sqrt{\epsilon}} \setminus B_\epsilon } \int_{B_{\frac\epsilon{|x|}} }\right)\frac{ |z|^{\tau_-(\s,\mu_0)}(-\ln|z|)}{|e_x-z|^{N+2\s}} \frac{\xi(x)}{|x|^{N}}  dz dx.
\end{align*}
As $ \epsilon\to0$, we have, by (\ref{eq:h_1-pointwise-est}),
\begin{align*}
&\Big| \int_{\R^N\setminus B_{\sqrt\epsilon} } \int_{B_{\frac\epsilon{|x|}} }\frac{1-|z|^{\tau_-(\s,\mu_0)}}{|e_x-z|^{N+2\s}}\frac{(-\ln|x|)\xi(x)}{|x|^{N}} dz dx\Big| \le  \int_{\R^N\setminus B_{\sqrt\epsilon} } \bigl|h_1(\frac\epsilon{|x|})\bigr| \frac{|\xi(x)|}{|x|^{N}} dx \\
    &\le c \|\xi\|_{L^\infty(\R^N)} 
\epsilon^{ N+\tau_-(\s,\mu_0)}    \int_{\sqrt{\epsilon}}^{R}t^{-N-1-\tau_-(\s,\mu_0)}| \ln t| dt = O\Big(\epsilon^{\frac{N+\tau_-(\s,\mu_0)}2}|\ln \epsilon|\Big)
\end{align*}
and
\begin{align*}
&  \int_{ B_{\sqrt\epsilon}\setminus B_{\epsilon}} \int_{B_{\frac\epsilon{|x|}}}\frac{1-|z|^{\tau_-(\s,\mu_0)}}{|e_x-z|^{N+2\s}} |x|^{-N}(-\ln|x|)\xi(x) dz dx
= [\xi(0) +O(\sqrt\epsilon)]\int_{B_{\sqrt\epsilon}\setminus B_{ \epsilon}} h_1(\frac\epsilon{|x|})|x|^{-N}(-\ln|x|)dx\\ 
&= [\xi(0) +O(\sqrt\epsilon)] \int_\epsilon^{\sqrt\epsilon} h_1(\frac{\epsilon}{t}) t^{-1}(-\ln t) dt = [\xi(0) +O(\sqrt\epsilon)]  \int^1_{\sqrt{\eps}} \frac{h_1(s)}{s} \bigl(-\ln \frac{\eps}{s}\bigr) ds
\end{align*}
Moreover, with $h_3(t):= \int_{B_{t}} \frac{ |z|^{\tau_-(\s,\mu_0)} \ln|z|}{|e_1-z|^{N+2\s}}  dz $, 
\begin{align*}
&\Big| \int_{\R^N\setminus B_{\sqrt\epsilon}} \int_{B_{\frac\epsilon{|x|}}}\frac{ |z|^{\tau_-(\s,\mu_0)}\ln|z|}{|e_x-z|^{N+2\s}} |x|^{-N} \xi(x) dz dx\Big|
\le   \int_{\R^N\setminus B_{\sqrt\epsilon}} h_3(\frac\epsilon{|x|})|x|^{-N}|\xi(x)| dx \\
    &\le c\,\omega_{N-1}\, \|\xi\|_{L^\infty(\R^N)}\,  \eps^{N+\tau_-(\s,\mu_0)} 
|\ln \eps| \int_{\sqrt\epsilon}^R t^{-N-1-\tau_-(\s,\mu_0)}|\ln t| \,dt
=O(\epsilon^{\frac{N+\tau_-(\s,\mu_0)}2}|\ln \epsilon|^2)
\end{align*}
as $\eps \to 0$. Here we used the fact that 
$$
|h_3(t)|\le c\, t^{N+\tau_-(\s,\mu_0)}|\ln t| \qquad \text{for any $t\le \sqrt{\epsilon}$ with a constant $c>0$ independent of $\eps \in (0,\frac{1}{4})$.}
$$
Moreover,
\begin{align*}
 \int_{ B_{\sqrt\epsilon}\setminus B_{\epsilon}} \int_{B_{\frac\epsilon{|x|}}}\frac{|z|^{\tau_-(\s,\mu)}\ln|z|}{|e_x-z|^{N+2\s}} |x|^{-N} \xi(x) dz dx
 &=  [\xi(0) +O(\sqrt\epsilon)]\int_{B_{\sqrt\epsilon}(0)\setminus B_{ \epsilon}(0)} h_3(\frac\epsilon{|x|})|x|^{-N}dx \\
    &= \omega_{N-1}[\xi(0) +O(\sqrt\epsilon)] \int_1^{\epsilon^{-\frac12}} h_3(\frac1t) t^{-1}  dt
  %\\ &\to &\omega_{N-1} \xi(0) \int^1_0 h_3(s) s^{-1}  ds
    \quad {\rm as}\ \, \epsilon\to0^+.
\end{align*}
Then as $\epsilon\to0^+$ we have that 
\begin{align}
 \int_{\R^N\setminus B_\epsilon} \int_{B_\epsilon}\frac{u(x)-u(y)}{|x-y|^{N+2\s}}\psi(x) dy dx\nonumber&= O\Big(\epsilon^{\frac{N+\tau_-(\s,\mu_0)}2}(-\ln \epsilon)\Big)+\omega_{N-1}\Big(\xi(0) +O(\sqrt\epsilon)\Big) \int_1^{\epsilon^{-\frac12}} h_3(\frac1t) t^{-1}  dt 
\\& +  \omega_{N-1}\Big(\xi(0) +O(\sqrt\epsilon)\Big)  \int_1^{\epsilon^{-\frac12}} h_1(\frac1t) t^{-1} (- \ln (\epsilon t))\,  dt.\label{2.1.0}
\end{align}

Now we deal with the last term in  (\ref{2.0}). Observe that
\begin{align*}
\int_{\R^N\setminus B_\epsilon} \int_{B_\epsilon}\frac{\psi(x)-\psi(y)}{|x-y|^{N+2\s}}u(x) dy dx  &= \int_{\R^N\setminus B_\epsilon} \int_{B_\epsilon}\frac{|x|^{\tau_+(\s,\mu)}-|y|^{\tau_+(\s,\mu)}}{|x-y|^{N+2\s}}|x|^{\tau_-(\s,\mu)}(-\ln|x|)\xi(x) dy dx
\\&+\int_{\R^N\setminus B_\epsilon} \int_{B_\epsilon}\frac{\xi(x)-\xi(y)}{|x-y|^{N+2\s}}|x|^{\tau_-(\s,\mu)}(-\ln|x|)|y|^{\tau_+(\s,\mu)} dy dx\\
  &=\int_{\R^N\setminus B_\epsilon} \int_{B_\frac{\epsilon}{|x|}}\frac{1-|z|^{\tau_+(\s,\mu)}}{|e_x-z|^{N+2\s}}|x|^{-N}(-\ln|x|)\xi(x) dy dx\\
  &+\int_{\R^N\setminus B_\epsilon} \int_{B_\epsilon}\frac{\xi(x)-\xi(y)}{|x-y|^{N+2\s}}|x|^{\tau_-(\s,\mu)}(-\ln|x|)|y|^{\tau_+(\s,\mu)} dy dx ,
\end{align*}
where
\begin{align*}
& \Big|\int_{\R^N\setminus B_\epsilon} \int_{B_\epsilon}\frac{\xi(x)-\xi(y)}{|x-y|^{N+2\s}}|x|^{\tau_-(\s,\mu)}(-\ln|x|)|y|^{\tau_+(\s,\mu)} dy dx\Big|
 \\ & \le\norm{\xi}_{C^2}\int_{\R^N\setminus B_\epsilon} \int_{B_\epsilon} |x-y|^{-N-2\s+2}|x|^{\tau_-(\s,\mu)}(-\ln|x|)|y|^{\tau_+(\s,\mu)} dydx
 \\&=\norm{\xi}_{C^2}(\int_{\R^N\setminus B_{2\epsilon}} \int_{B_\epsilon}+\int_{  B_{2\epsilon}\setminus  B_{\epsilon}} \int_{B_\epsilon} ) |x-y|^{-N-2\s+2}|x|^{\tau_-(\s,\mu)}|y|^{\tau_+(\s,\mu)} dydx,
\end{align*}
since
\begin{align*}
&\int_{\R^N\setminus B_{2\epsilon}} \int_{B_\epsilon} |x-y|^{-N-2\s+2}|x|^{\tau_-(\s,\mu)}(-\ln|x|)|y|^{\tau_+(\s,\mu)} dydx
 \\&= \int_{\R^N\setminus B_{2\epsilon}} \int_{B_{\frac{\epsilon}{|x|}}}|e_x-z|^{-N-2\s+2}|z|^{\tau_+(\s,\mu)} |x|^{2-N}(-\ln|x|)dzdx  \\
   &\le  2^{N+2\s-2} \int_{\R^N\setminus B_{2\epsilon}}(\frac{\epsilon}{|x|})^{N+\tau_+(\s,\mu)} |x|^{2-N}(-\ln|x|) dx \,\le \, c\epsilon^2
\end{align*}
and
\begin{align*}
&\int_{ B_{2\epsilon} \setminus B_\epsilon} \int_{B_\epsilon} |x-y|^{-N-2\s+2}|x|^{\tau_-(\s,\mu)}(-\ln|x|)|y|^{\tau_+(\s,\mu)} dydx
 \\&\le  \int_{ B_{2\epsilon} \setminus B_\epsilon}\left (\int_{B_1}|z|^{\tau_+(\s,\mu)}dz+\int_{B_{1/2}}|e_x-z|^{-N-2\s+2}dz\right) |x|^{2-N}(-\ln|x|)dx  \\
   &\le  c\int_{ B_{2\epsilon} \setminus B_\epsilon}  |x|^{2-N}(-\ln|x|) dx \,\le\,  c\epsilon^2.
\end{align*}
Therefore we have that
$$\Big|\int_{\R^N\setminus B_\epsilon} \int_{B_\epsilon}\frac{\xi(x)-\xi(y)}{|x-y|^{N+2\s}}|x|^{\tau_-(\s,\mu)}(-\ln|x|)|y|^{\tau_+(\s,\mu)} dy dx\Big|\le c\epsilon^2.$$

Moreover,
\begin{align*}
 \Big| \int_{\R^N\setminus B_{\sqrt\epsilon}} \int_{B_{\frac\epsilon{|x|}}(0)}\frac{1-|z|^{\tau_+(\s,\mu)}}{|e_x-z|^{N+2\s}} |x|^{-N}(-\ln|x|)\xi(x) dz dx\Big|
&\le   \int_{\R^N\setminus B_{\sqrt\epsilon}} h_2(\frac\epsilon{|x|})|x|^{-N}|(\ln|x|)|\, |\xi(x)| dx
  %\\    &\le &c \epsilon^N\int_{\R^N} \xi(x) dx
    \\&= O\Big(\epsilon^{\frac{N+\tau_+(\s,\mu_0)}2}(-\ln \epsilon)\Big)\quad {\rm as}\ \, \epsilon\to0^+
\end{align*}
and
\begin{align*}
  \int_{ B_{\sqrt\epsilon}\setminus B_{\epsilon}} \int_{B_{\frac\epsilon{|x|}}}\frac{1-|z|^{\tau_+(\s,\mu)}}{|e_x-z|^{N+2\s}} &|x|^{-N}(-\ln|x|)\xi(x) dz dx =  [\xi(0) +O(\sqrt\epsilon)]\int_{B_{\sqrt\epsilon}\setminus B_{ \epsilon}} h_2(\frac\epsilon{|x|})|x|^{-N}(-\ln|x|) dx
  \\ &\longrightarrow \,\omega_{N-1}\Big(\xi(0) +O(\sqrt\epsilon)\Big)\int_1^{\epsilon^{-\frac12}} h_2(\frac1t) t^{-1} (- \ln (\epsilon t))\,  dt 
    \quad {\rm as}\ \, \epsilon\to0^+.
\end{align*}
Hence as $\epsilon\to0^+$ we have that
\begin{align}
\int_{\R^N\setminus B_\epsilon} \int_{B_\epsilon}\frac{\psi(x)-\psi(y)}{|x-y|^{N+2\s}}u(x) dy dx&=O(\epsilon^2)+O\Big(\epsilon^{\frac{N+\tau_+(\s,\mu_0)}2}(-\ln \epsilon)\Big)  \nonumber 
\\&+  \omega_{N-1}\Big(\xi(0) +O(\sqrt\epsilon)\Big)  \int_1^{\epsilon^{-\frac12}} h_2(\frac1t) t^{-1} (- \ln (\epsilon t))\,  dt.\label{2.2.0}
\end{align}
We note that $h_1=h_2$ for $\mu=\mu_0$, and by  (\ref{2.3.1}), (\ref{2.1.0}) and (\ref{2.2.0}), we have that
\begin{align*}
\int_{\R^N\setminus B_\epsilon}\psi_\mu   (-\Delta)^\s_{\Gamma_{\mu_0}}\xi \, dx &=O(\epsilon^2)+O\Big(\epsilon^{\frac{N+\tau_+(\s,\mu_0)}2}(-\ln \epsilon)\Big) +O(\sqrt\epsilon \ln \epsilon)\\&\ \ +C_{N,\s}\omega_{N-1}\Big(\xi(0) 
+O(\sqrt\epsilon)\Big) \int_1^{\epsilon^{-\frac12}} h_3(\frac1t) t^{-1} dt,\end{align*}
which, passing to the limit as $\epsilon\to0^+$,  implies that \begin{align*}
\int_{\R^N}\psi_\mu   (-\Delta)^\s_{\Gamma_{\mu_0}}\xi \, dx =-C_{N,\s}\omega_{N-1}   \int^1_0 h_3(s) s^{-1} dt  \,\xi(0) =  C_{N,\s} \omega_{N-1} \int^1_0 \int_{B_{t}} \frac{ |z|^{\tau_-(\s,\mu)}(-\ln|z|) }{|e_x-z|^{N+2\s}}  dzdt\, \xi(0),
\end{align*}
then (\ref{1.2}) holds true for $\mu=\mu_0$.
\end{proof}

\setcounter{equation}{0}
\section{The Dirichlet problem in a bounded domain}
\label{sec:dirichl-probl-bound}

This section is devoted to the analysis of the Dirichlet problem~(\ref{eq 2.1fk}). In particular, we shall prove Parts (i) and (ii) of Theorem~\ref{theorem-C}. We first need to set up a functional analytic framework for singular weak solutions.

\subsection{Function spaces, weak comparison principles, and singular weak solutions}

\label{sec:preliminary-notation}

We denote the quadratic form
$$
(u,v) \mapsto \cE^\s(u,v) = \frac{C_{N,\s}}{2}\int_{\R^N \times \R^N} \frac{(u(x)-u(y))(v(x)-v(y))}{|x-y|^{N+2\s}}dx dy,
$$
associated to $(-\Delta)^\s$ and the space
$$
\cH^\s(\R^N):= \{u \in L^2_{loc}(\R^N)\::\: \cE^{\s}(u,u)<\infty\}.  
$$
By the fractional Sobolev inequality, we have that
\begin{equation}
  \label{eq:Sobolev-embeddings-0}
\cH^\s(\R^N) \subset L^{\frac{2N}{N-2\s}}(\R^N).
\end{equation}
By density, it follows that the fractional Hardy inequality (\ref{eq:fractional-hardy-C-infty}) extends to functions $u,v \in \cH^\s(\R^N)$, which implies that the quadratic form
$$
\cE^\s_\mu(u,v) := \cE^\s(u,v) + \mu \int_{\R^N}\frac{uv}{|x|^{2\s}}\,dx,
$$
is well-defined for $u,v \in \cH^\s(\R^N)$ and $\mu>\mu_0$. In the following,
for measurable sets $A,B \subset \R^N$ and measurable functions $u,v: A \cup B \to \R$, we put
$$
\cE^{\s}_{A,B}(u,v) = \frac{C_{N,\s}}{2} \int_{A \times B} \frac{(u(x)-u(y))(v(x)-v(y))}{|x-y|^{N+2\s}}dx dy
$$
and $\cE^{\s}_A(u,v):= \cE^\s_{A,A}(u,v)$ whenever the integral are well defined in Lebesgue sense.

Next, let $\Omega \subset \R^N$ denote a bounded domain which may or may not contain the origin,  the space
$$
\cH^\s_0(\Omega):= \{u \in \cH^\s(\R^N)\::\: u \equiv 0 \text{ on $\R^N
\setminus \Omega$}\}.
$$
By (\ref{eq:Sobolev-embeddings-0}), we then have that
\begin{equation}
  \label{eq:Sobolev-embeddings}
\cH^\s_0(\Omega) \subset L^{s}(\Omega) \quad \text{for \,$1\leq s \le \frac{2N}{N-2\s}$.}
\end{equation}
We also recall the following fractional Hardy inequality with remainder term, see \cite[Theorem 2.3]{FLS-2008}: there exists a constant $C$ such that
\begin{equation}
  \label{eq:fractional-hardy-remainder}
\cE^\s_{\mu_0}(u,u) \ge C \|u\|_{L^2(\Omega)}^2\quad \text{for \,$u \in \cH^\s_0(\Omega)$.}
\end{equation}

In the following, it will be convenient to set
$$
\Omega_\eps:= \Omega \setminus \overline{B_\eps} \qquad \text{for $\eps>0$.}
$$

\begin{definition}
\label{def-W-alpha}
We define
$W^\s(\Omega)$ as the space of functions
$u \in L^1(\R^N, \frac{dx}{1+|x|^{N+2\s}})$ with $u\big|_{\Omega} \in L^2(\Omega)$ and the property that
$$
\cE^\s_{\Omega,\Omega'}(u,u)<\infty\quad \text{for some domain $\Omega' \subset \R^N$ with $\Omega \subset \subset \Omega'$.}
$$
Moreover, we let $W^\s_*(\Omega)$ denote the space of functions
$u \in L^1(\R^N, \frac{dx}{1+|x|^{N+2\s}})$ such that $u|_{\Omega_\eps} \in W^\s(\Omega_\eps)$ for every $\eps>0$.
\end{definition}

\begin{remark}
\label{gamma-mu-W-alpha-remark}
Let, as before, $\Omega \subset \R^N$ be a bounded domain. 
\begin{enumerate}
\item[(i)] Since $\cH^\s_0(\Omega) \hookrightarrow L^{2}(\Omega)$ by (\ref{eq:Sobolev-embeddings}) and the fact that $\Omega$ is bounded, we have $\cH^\s_0(\Omega) \subset W^\s(\Omega)$. Moreover, every function $u \in L^1(\R^N, \frac{dx}{1+|x|^{N+2\s}})$ which is of class $C^{\s+\eps}$ in a neighborhood of $\bar \Omega$ for some $\eps>0$ is contained in $W^\s(\Omega)$.
\item[(ii)] If $u \in L^1(\R^N, \frac{dx}{1+|x|^{N+2\s}}) \cap C^{\s+\eps}_{loc}(\R^N \setminus \{0\})$ for some $\eps>0$, then 
$u \in W^\s(\Omega_\eps)$ for every $\eps>0$ by $(i)$ and therefore $u \in W^\s_*(\Omega)$. In particular, this implies that  
$$
\Phi_\mu, \Gamma_\mu  \in W^\s_*(\Omega) \quad\ \text{for \, $\mu_0 \le \mu < \infty.$}
$$
On the other hand, if $0  \in \Omega$, it is obvious that $\Phi_\mu \not \in W^\s(\Omega)$ for any $\mu \in [\mu_0,\infty)$.
With regard to $\Gamma_\mu$, we have the following key lemma.
\end{enumerate}
\end{remark}

\begin{lemma}
\label{gamma-mu-W-alpha}
\begin{itemize}
\item[(i)] $\Gamma_\mu \in W^\s(\Omega)$ for $\mu>\mu_0$.
\item[(ii)] If $0 \in \Omega$, then $\Gamma_{\mu_0}\not \in W^\s(\Omega)$.
\end{itemize}
\end{lemma}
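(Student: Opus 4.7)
The plan is to reduce both assertions to the behaviour near the origin of the Gagliardo seminorm of $|x|^{\tau_+(\s,\mu)}$ and to exploit the scale invariance that appears precisely at the critical value $\tau_+(\s,\mu_0)=\frac{2\s-N}{2}$.

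For part (i), the routine memberships $\Gamma_\mu \in L^1(\R^N,\frac{dx}{1+|x|^{N+2\s}})$ and $\Gamma_\mu|_\Omega \in L^2(\Omega)$ follow from $\tau_+(\s,\mu) \in (\frac{2\s-N}{2},2\s)$, since this forces $2\tau_+(\s,\mu)>2\s-N>-N$ near the origin and $\tau_+(\s,\mu) - N - 2\s < -N$ at infinity. The essential step is to bound $\cE^\s_{\Omega,\Omega'}(\Gamma_\mu,\Gamma_\mu)$ for some bounded $\Omega'$ with $\Omega \subset\subset \Omega'$. I would fix such an $\Omega'$ and a small $r>0$ with $B_{2r}\subset\Omega'$, and split the region of integration into a near-origin piece $(\Omega\cap B_r)\times(\Omega'\cap B_{2r})$ and its complement. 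On the complement, $\Gamma_\mu$ is $C^\infty$ and bounded, so the contribution is finite by the standard Gagliardo estimate for smooth functions on bounded domains. On the near-origin piece, the substitution $x=ru$, $y=rv$ produces a factor $r^{2\tau_+(\s,\mu)+N-2\s}$ times a dimensionless integral on $B_1\times B_1$; since $\tau_+(\s,\mu)>\frac{2\s-N}{2}$, this exponent is strictly positive, so it suffices that the dimensionless integral be finite. That is exactly the statement that $|\cdot|^{\tau_+(\s,\mu)}$ multiplied by a smooth cutoff belongs to $H^\s_{\mathrm{loc}}(\R^N)$, a standard consequence of the subcritical condition $2\tau_+(\s,\mu)>2\s-N$.

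For part (ii), I argue by contradiction. Suppose $\Gamma_{\mu_0}\in W^\s(\Omega)$ and let $\Omega'$ with $\Omega\subset\subset\Omega'$ witness $\cE^\s_{\Omega,\Omega'}(\Gamma_{\mu_0},\Gamma_{\mu_0})<\infty$. Choose $r>0$ with $B_r\subset\Omega$ and set
$$
I(\rho):=\int_{B_\rho\times B_\rho}\frac{\bigl(|x|^{\frac{2\s-N}{2}}-|y|^{\frac{2\s-N}{2}}\bigr)^2}{|x-y|^{N+2\s}}\,dx\,dy,\qquad \rho\in(0,r].
$$
The bound $I(r)\le \frac{2}{C_{N,\s}}\cE^\s_{\Omega,\Omega'}(\Gamma_{\mu_0},\Gamma_{\mu_0})<\infty$ is immediate. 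The substitution $x=\rho u$, $y=\rho v$, together with the identity $2\tau_+(\s,\mu_0)+N-2\s=0$, yields $I(\rho)=I(r)$ for every $\rho\in(0,r]$. On the other hand, since the integrand is integrable on $B_r\times B_r$, the dominated convergence theorem applied as $\rho\to 0^+$ forces $I(\rho)\to 0$, so $I(r)=0$. This contradicts the fact that the integrand is strictly positive on the set $\{(x,y)\in B_r\times B_r:|x|\ne|y|\}$, which has positive $2N$-dimensional Lebesgue measure.

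The principal technical obstacle is in part (i): verifying that the dimensionless integral on $B_1\times B_1$ is finite when $\tau_+(\s,\mu)$ is close to the critical value, where $|x|^{\tau_+}$ is singular at the origin and far from H\"older continuous, so the naive Gagliardo-type estimate fails. A clean alternative, which I favour, is a dyadic decomposition $B_1=\bigcup_{k\ge 0}(B_{2^{-k}}\setminus B_{2^{-k-1}})$: applying the same scaling identity inside each annulus yields, for the cross terms as well after a standard splitting, contributions bounded by $2^{-k(2\tau_+(\s,\mu)+N-2\s)}$, and the resulting geometric series converges precisely because the exponent is strictly positive for $\mu>\mu_0$.
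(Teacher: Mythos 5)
Your argument is correct, but it follows a genuinely different route from the paper. The paper proves both parts by a single explicit change of variable: for fixed $x$ it writes $\int_{B_R}(\Gamma_\mu(x)-\Gamma_\mu(y))^2|x-y|^{-N-2\s}\,dy = |x|^{2\tau_+-2\s}\int_{B_{R/|x|}}(1-|z|^{\tau_+})^2|e_x-z|^{-N-2\s}\,dz$, then estimates the $z$-integral pointwise in the three regimes $\tau_+<\s$, $\tau_+=\s$, $\tau_+>\s$ and integrates in $x$; for $\mu>\mu_0$ the exponent $2\tau_+-2\s>-N$ makes the $x$-integral converge, while for $\mu=\mu_0$ the exponent is exactly $-N$ and the inner integral has a positive lower bound, forcing divergence. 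You instead isolate the scale-invariance structure: your dyadic decomposition of $B_1$ reduces part~(i) to a geometric series with ratio $2^{-(2\tau_++N-2\s)}$, and your part~(ii) is a clean scaling/dominated-convergence contradiction exploiting that $2\tau_+(\s,\mu_0)+N-2\s=0$ makes $I(\rho)$ literally constant in $\rho$. Both strategies are sound, and yours makes the role of criticality more transparent; the paper's version has the advantage of producing a sharp pointwise weight which it reuses later (e.g.\ in Lemma~\ref{relationship-operators}). One small imprecision in your first paragraph: on the complement of the near-origin region $(\Omega\cap B_r)\times(\Omega'\cap B_{2r})$, $\Gamma_\mu$ is \emph{not} bounded — the slab $(\Omega\setminus B_r)\times(\Omega'\cap B_{r/2})$ contains $y$ arbitrarily close to $0$. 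That piece is nevertheless finite because $|x-y|\ge r/2$ there, so it is controlled by $\|\Gamma_\mu\|_{L^2}$ rather than by smoothness; your dyadic argument, which you rightly favour, sidesteps this entirely.
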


\begin{proof}  
Since $\frac{2\s-N}{2} \le \tau_+(\s,\mu)<2\s$ for $\mu \in [\mu_0,\infty)$ and $\Omega$ is bounded, we clearly have$$
\Gamma_\mu \in L^2(\Omega) \cap L^1\bigl(\R^N, \frac{dx}{1+|x|^{N+2\s}}\bigr) \qquad \text{for $\mu \in [\mu_0,\infty)$.}
$$
For $x \in \R^N \setminus \{0\}$ and $R>0$, then we have that
\begin{equation}
  \label{eq:gamma-mu-W-alpha-1}
\int_{B_R}\frac{(\Gamma_\mu(x)-\Gamma_\mu(y))^2}{|x-y|^{N+2\s}}dy =
 |x|^{2\tau_+(\s,\mu)-2\s} \int_{B_{\frac{R}{|x|}}} \frac{(1-|z|^{\tau_+(\s,\mu)})^2}{|e_x-z|^{N+2\s}}dz
\end{equation}
and
\begin{align}
&\int_{B_{\frac{R}{|x|}}} \frac{(1-|z|^{\tau_+(\s,\mu)})^2}{|e_x-z|^{N+2\s}}dz \le c \int_{B_{\frac{R}{|x|}}} \frac{\min \{(1-|z|)^2, (1-|z|)^{2\tau_+(\s,\mu)}\}}{|e_x -z|^{N+2\s}}dz  \nonumber\\
&\le c \int_{B_{\frac{R}{|x|}}} \frac{\min \{|e_x-z|^2,|e_x-z|^{2\tau_+(\s,\mu)}\}}{|e_x -z|^{N+2\s}}dz \nonumber \le c \int_{B_{\frac{R}{|x|}+1}} \min \bigl\{ |z|^{2-2\s-N},|z|^{2\tau_+(\s,\mu)-2\s-N} \bigr\} d z \nonumber\\
&\le c
\left\{\arraycolsep=1pt
\begin{array}{lll}
  1 \quad
   &{\rm if}\ \, \tau_+(\s,\mu) <\s ,\\[1mm]
 \phantom{   }
 1+ \ln\frac{R}{|x|} \quad &{\rm   if } \ \,  \tau_+(\s,\mu) =\s,\\[1mm]
 \phantom{   }
 1+ \Bigl(\frac{|x|}{R}\Bigr)^{2\s-2\tau_+(\s,\mu)} \quad &{\rm   if } \ \,  \tau_+(\s,\mu) >\s.
 \end{array}
 \right.   \label{eq:gamma-mu-W-alpha-2}
\end{align}
If $\mu> \mu_0$, we have $2\tau_+(\s,\mu)-2\s>-N$ and therefore, by combining (\ref{eq:gamma-mu-W-alpha-1}) and (\ref{eq:gamma-mu-W-alpha-2}),
$$
\int_{B_R} \int_{B_R}\frac{(\Gamma_\mu(x)-\Gamma_\mu(y))^2}{|x-y|^{N+2\s}}dydx < \infty
$$
for every $R>0$. Taking $R>0$ sufficiently large such that $\Omega \subset B_R$, we conclude that $\Gamma_\mu \in W^\s(\Omega)$. On the other hand, we have $2\tau_+(\s,\mu_0)-2\s=-N$ and $\tau_+(\s,\mu_0) <\s$, so
(\ref{eq:gamma-mu-W-alpha-1}) yields that
$$
\int_{B_R} \int_{B_R}\frac{(\Gamma_{\mu_0}(x)-\Gamma_{\mu_0}(y))^2}{|x-y|^{N+2\s}}dydx \ge \int_{B_R}
 |x|^{-N} \int_{B_{1}} \frac{(1-|z|^{\tau_+(\s,\mu_0)})^2}{|e_x-z|^{N+2\s}}dz = \infty
$$
for every $R>0$. Hence $\Gamma_{\mu_0} \not \in W^\s(\Omega)$.
\end{proof}

\begin{remark}\label{eq:Delta-alpha-homogeneous}
For $\tau>-\frac{N-2\s}{2}$, we have that $\phi_{\tau}=|\cdot|^\tau\in W^\s(\Omega)$, where $\Omega$ is a  bounded smooth domain.
\end{remark}

\begin{lemma}
\label{u-minus-lemma}
  \begin{enumerate}
\item[(i)] $\cE^\s(u,v)$ is well defined for $u \in W^\s(\Omega)$, $v \in \cH^\s_0(\Omega)$.
\item[(ii)] If $u \in W^{\s}(\Omega)$, then $u^\pm \in W^{\s}(\Omega)$.
\item[(iii)] If $u \in W^{\s}(\Omega)$ satisfies $u \equiv 0$ in $\Omega^c$, then $u \in \cH^\s_0(\Omega)$.
\item[(iv)] If $u \in W^\s(\Omega)$ is such that $u \ge 0$ in $\Omega^c$, then $u^- \in \cH^\s_0(\Omega)$ and
$$
\cE^\s(u^-,u^-) \le -\cE^\s(u,u^-).
$$
\item[(v)]  If $u \in W^\s(\Omega)$ and $\phi \in L^\infty(\R^N)$ is Lipschitz in some neighborhood of $\overline \Omega$,  then $\phi u \in W^\s(\Omega)$.
\item[(vi)] If $0 \in \Omega$ or $0 \not \in \overline \Omega$, we have $W^\s(\Omega) \subset L^2(\Omega,|x|^{-2\s})$.
\end{enumerate}
\end{lemma}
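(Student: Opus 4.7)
The plan is to treat the six parts in sequence, relying on the definition of $W^\s(\Omega)$, set-additivity of $\cE^\s$, Cauchy--Schwarz, and a careful choice of product decomposition in part (v). Throughout, fix $\Omega' \supset\supset \Omega$ witnessing $u \in W^\s(\Omega)$.

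For (i), I would split $\R^N \times \R^N$ into the four pieces determined by $\Omega$ and $\Omega^c$. The contribution from $\Omega^c \times \Omega^c$ vanishes since $v \equiv 0$ there. Cauchy--Schwarz handles $\Omega \times \Omega$ via $\cE^\s_\Omega(u,u) \le \cE^\s_{\Omega,\Omega'}(u,u)$ and $\cE^\s_\Omega(v,v) \le \cE^\s(v,v)$. For the mixed piece, I subdivide $\Omega^c = (\Omega'\setminus\Omega) \cup (\Omega')^c$: on the former, I use $v(y)=0$ to write $v(x) = v(x)-v(y)$ and apply Cauchy--Schwarz with $\cE^\s(v,v)$ and $\cE^\s_{\Omega,\Omega'}(u,u)$; on the latter, $|x-y|$ is bounded below and the kernel decays like $(1+|y|)^{-N-2\s}$, so the $u(x)v(x)$ part is controlled by $\|u\|_{L^2(\Omega)}\|v\|_{L^2(\Omega)}$ and the $u(y)v(x)$ part by $\|v\|_{L^1(\Omega)}$ times the $L^1(\R^N, dx/(1+|x|^{N+2\s}))$-norm of $u$.

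Parts (ii)--(iv) flow from (i). Item (ii) uses the pointwise bound $|u^\pm(x)-u^\pm(y)| \le |u(x)-u(y)|$, so $\cE^\s_{\Omega,\Omega'}(u^\pm,u^\pm) \le \cE^\s_{\Omega,\Omega'}(u,u)$, and the remaining requirements are inherited from $|u^\pm| \le |u|$. For (iii), $u \equiv 0$ in $\Omega^c$ kills the $\Omega^c \times \Omega^c$ integral and reduces the mixed integrand to $u(x)^2|x-y|^{-N-2\s}$; its restriction to $\Omega \times (\Omega'\setminus\Omega)$ coincides with the corresponding piece of $\cE^\s_{\Omega,\Omega'}(u,u)$ (since $u(y)=0$ there), and on $\Omega \times (\Omega')^c$ the distance bound yields $C\|u\|_{L^2(\Omega)}^2$. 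For (iv), $u \ge 0$ on $\Omega^c$ forces $u^- \equiv 0$ there, so (ii) and (iii) give $u^- \in \cH^\s_0(\Omega)$; the inequality rests on the pointwise identity $(u(x)-u(y))(u^-(x)-u^-(y)) \le -(u^-(x)-u^-(y))^2$, verified by checking the sign combinations of $(u(x),u(y))$, and then integrated using (i).

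The main obstacle is (v): the symmetric decomposition $\phi(x)u(x)-\phi(y)u(y)=\phi(x)(u(x)-u(y))+u(y)(\phi(x)-\phi(y))$ produces a $u(y)^2$ integral on the outer set, and $W^\s(\Omega)$ does not give $L^2$ control of $u$ outside $\Omega$. My workaround is the asymmetric decomposition
$$\phi(x)u(x)-\phi(y)u(y)=(\phi(x)-\phi(y))u(x)+\phi(y)(u(x)-u(y)),$$
which replaces $u(y)^2$ by $u(x)^2$ with $x \in \Omega$. Picking $\Omega''$ with $\Omega \subset\subset \Omega'' \subset\subset (\Omega' \cap U)$, where $U$ is the Lipschitz neighborhood of $\overline{\Omega}$, and estimating $(\phi(x)-\phi(y))^2 \le \min\{L^2 |x-y|^2, 4\|\phi\|_\infty^2\}$, one sees that $\int_{\Omega''} (\phi(x)-\phi(y))^2 |x-y|^{-N-2\s} dy$ is bounded uniformly in $x \in \Omega$ (the near-diagonal part is integrable because $\s<1$). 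This yields $\cE^\s_{\Omega,\Omega''}(\phi u,\phi u) \le C\|u\|_{L^2(\Omega)}^2 + 2\|\phi\|_\infty^2 \cE^\s_{\Omega,\Omega'}(u,u) < \infty$.

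For (vi), the case $0 \notin \overline{\Omega}$ is immediate since $|x|^{-2\s}$ is bounded on $\Omega$. For $0 \in \Omega$, I would fix $r>0$ with $B_r \subset \Omega$ and a cutoff $\eta \in C^\infty_c(B_r)$ with $\eta \equiv 1$ on $B_{r/2}$. By (v), $\eta u \in W^\s(\Omega)$, and since $\eta u \equiv 0$ in $\Omega^c$, (iii) gives $\eta u \in \cH^\s_0(\Omega) \subset \cH^\s(\R^N)$. The fractional Hardy inequality (\ref{eq:fractional-hardy-C-infty}), already extended to $\cH^\s(\R^N)$ by density in the excerpt, then yields $\int_{\R^N}(\eta u)^2 |x|^{-2\s} dx \le |\mu_0|^{-1} \cE^\s(\eta u,\eta u) < \infty$, which dominates $\int_{B_{r/2}} u^2 |x|^{-2\s} dx$. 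On $\Omega \setminus B_{r/2}$ the weight is bounded, and $u \in L^2(\Omega)$ completes the inclusion.
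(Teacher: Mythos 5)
Your proposal is correct and follows essentially the same approach as the paper's proof: split the double integral using an intermediate domain $\Omega'$, use Cauchy--Schwarz near the diagonal and the positive-distance/kernel-decay estimate far away; in (v) the asymmetric splitting $\phi(x)u(x)-\phi(y)u(y)=(\phi(x)-\phi(y))u(x)+\phi(y)(u(x)-u(y))$ is exactly the one the paper uses, and in (vi) both proofs cut off near the origin, apply (iii) and (v) to put $\eta u$ in $\cH^\s_0(\Omega)$, and then invoke the fractional Hardy inequality. The only substantive difference is small and cosmetic: in (ii) you use the $1$-Lipschitz estimate $|u^\pm(x)-u^\pm(y)|\le|u(x)-u(y)|$ directly, whereas the paper expands $\cE^\s_{\Omega,\Omega'}(u^+-u^-,u^+-u^-)$ and drops the nonnegative cross term $-2\int (u^+(x)-u^+(y))(u^-(x)-u^-(y))j_\s$; both routes give the same bound, your version being slightly more economical.
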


\begin{proof}
Throughout the proof, we use the notation $j_\s(z)=C_{N,\s} |z|^{-N-2\s}$.\smallskip

(i) Let $u\in W^\s(\Omega)$, $v\in \cH^\s_0(\Omega)$, and let $\Omega' \subset \R^N$ be such that $\Omega \subset \subset \Omega'$ and $\cE^\s{\Omega,\Omega'}(u)<\infty$. Then, by the symmetry of the integrand, Fubini's theorem and the Cauchy-Schwarz inequality, we have that
	\begin{align*}
	&\frac{1}{2}\int_{\R^N} \int_{\R^N} |u(x)-u(y)|\cdot|v(x)-v(y)|j_\s(x-y)\ dxdy
\\
&= \frac{1}{2} \Bigl(\int_{\Omega} \int_{\Omega'} \dots + \int_{\Omega} \int_{R^N \setminus \Omega'} + \int_{R^N \setminus
\Omega} \int_{\R^N}\dots\Bigr)= \frac{1}{2} \Bigl(\int_{\Omega} \int_{\Omega'} \dots + \int_{\Omega} \int_{R^N \setminus \Omega'} + \int_{R^N \setminus
\Omega} \int_{\Omega}\dots\Bigr)\\
&= \frac{1}{2} \Bigl(\int_{\Omega} \int_{\Omega'} \dots + \int_{\Omega} \int_{R^N \setminus \Omega'} + \int_{R^N \setminus
\Omega'} \int_{\Omega}\dots+ \int_{\Omega' \setminus
\Omega} \int_{\Omega}\dots \Bigr) \le \int_{\Omega} \int_{\Omega'} \dots + \int_{\Omega} \int_{R^N \setminus \Omega'} \dots\\
&\leq 2 \Bigl(\cE^\s_{\Omega,\Omega'}(u,u) \cE^\s_{\Omega,\Omega'}(v,v)\Bigr)^{1/2}
 +\int_{\Omega} |v(x)| \int_{\R^N\setminus \Omega'} |u(x)-u(y)|j_\s(x-y)\ dydx\\
&\leq 2\Bigl(\cE^\s_{\Omega,\Omega'}(u,u) \cE^{\s}(v,v)\Bigr)^{1/2}
 +c_1 \int_{\Omega} |v(x)||u(x)|\,dx + c_2 \int_{\Omega}|v(x)|\,dx \|u\|_{L^1(\R^N,\frac{dx}{1+|y|^{N+2\s}})}\\
&\le 2\Bigl(\cE^\s_{\Omega,\Omega'}(u,u) \cE^{\s}(v,v)\Bigr)^{1/2}
 +c_1 \|v\|_{L^2(\Omega)} \|u\|_{L^2(\Omega)}+ c_2 |\Omega|^{1/2} \|v\|_{L^2(\Omega)} \|u\|_{L^1(\R^N,\frac{dx}{1+|y|^{N+2\s}})}
\end{align*}
with
$$
c_1 = \sup_{x \in \Omega} \int_{\R^N\setminus \Omega'} j_\s(x-y)\ dy < \infty
$$
and
$$
c_2 = \sup_{x \in \Omega, y \in \R^N \setminus \Omega'} j_\s(x-y)(1+|y|)^{-N-2\s}< \infty.
$$
Here we used the fact that $\Omega$ and $\R^N \setminus \Omega'$ have positive distance. \smallskip

(ii) Let $u \in W^\s(\Omega)$. For $x,y \in \R^N$,  we have that
\begin{equation}
  \label{eq:pointwise}
(u^+(x)-u^+(y))(u^-(x)-u^-(y))=-2\bigl(u^+(x)u^-(y)+u^-(x)u^+(y)\bigr) \le 0.
\end{equation}
Therefore,
\begin{align*}
\cE^\s_{\Omega,\Omega'}(u,u)&=\cE^\s_{\Omega,\Omega'}(u^+-u^-,u^+-u^-)
=\cE^\s_{\Omega,\Omega'}(u^+,u^+)+\cE^\s_{\Omega,\Omega'}(u^-,u^-)\\
&-2 \int_{\Omega' \times \Omega'}(u^+(x)-u^+(y))(u^-(x)-u^-(y))j_\s(x-y)\,dx dy \\
&\ge \cE^\s_{\Omega,\Omega'}(u^+,u^+)+\cE^\s_{\Omega,\Omega'}(u^-,u^-).
\end{align*}
Consequently, $u^\pm \in W^\s(\Omega)$. \smallskip

(iii) Since $u \equiv 0$ in $\Omega^c$, similarly as in the proof of (i), we have that
\begin{align*}
\cE^\s(u,u)&=  \frac{1}{2}\int_{\R^N} \int_{\R^N} (u(x)-u(y))^2|j_\s(x-y)\ dxdy \le \int_{\Omega} \int_{\Omega'} \dots + \int_{\Omega} \int_{R^N \setminus \Omega'} \dots\\
&=2 \cE^\s_{\Omega,\Omega'}(u,u)+ \int_{\Omega}u(x) \int_{\R^N \setminus \Omega'}(u(x)-u(y))j_\s(x-y)dydx \\
&\le 2 \cE^\s_{\Omega,\Omega'}(u,u)+\|u\|_{L^2(\Omega)}^2+ c_2 |\Omega|^{1/2} \|u\|_{L^2(\Omega)} \|u\|_{L^1(\R^N,(1+|x|)^{-N-2\s})}< \infty
\end{align*}
and therefore $u \in \cH^\s_0(\Omega)$.\smallskip

(iv) Since $u^- \in W^{\s}(\Omega)$ by (ii) and $u^- \equiv 0$ on $\Omega^c$ by assumption, we have
$u^- \in \cH^\s_0(\Omega)$ by (iii). Moreover, we have
$$
\cE^\s(u,u^-)= \cE^\s(u^+-u^-,u^-) = \cE^\s(u^+,u-)-\cE^\s(u^-,u^-),
$$
where all terms are well-defined by (i), (ii) and the fact that 
$u^- \in \cH^\s_0(\Omega)$. Furthermore, $\cE^\s(u^+,u^-) \le 0$ by \eqref{eq:pointwise}. Hence the claim follows.

(v) By assumption, there exists a domain $\Omega' \subset \R^N$ with $\Omega \subset \subset \Omega'$ and 
$\cE^\s_{\Omega,\Omega'}(u,u)<\infty$ and a constant $c(\phi)>0$ with
$$
|\phi(x)-\phi(y)|\le c  (1 \wedge |x-y|)
\quad \text{for $x,y \in \Omega'$}.
$$
We may then estimate as follows (with $c= c(\phi,\s)$ changing from line to line):
	\begin{align*}
	&\cE^\s_{\Omega,\Omega'}(\phi u,\phi u) = \frac{1}{2}\int_{\Omega}\int_{\Omega'} [u(x)\phi(x)-u(y)\phi(y)]^2j(x-y)\ dydx\\
	&\leq  \int_{\Omega}u(x)^2 \int_{\Omega'}[\phi(x)-\phi(y)]^2j(x-y)\ dy dx+ \int_{\Omega}\int_{\Omega'} \phi(y)^2[(u(x)-u(y)]^2j(x-y)\ dydx \\
	&\leq  c \int_{\Omega}u(x)^2 \int_{\Omega'} (1 \wedge |x-y|^2) j(x-y)\ dy dx + \|\phi\|_{L^\infty(\Omega')}^2
\int_{\Omega}\int_{\Omega'} [(u(x)-u(y)]^2j(x-y)\ dy dx \\
	&\leq c \|u\|_{L^2(\Omega)}^2 + 2 \|\phi\|_{L^\infty(\Omega')}^2 \cE^\s_{\Omega,\Omega'}(u) < \infty.
	\end{align*}
Hence $u \in W^\s(\Omega)$.

(vi) The result is true if $0 \not \in \bar{\Omega}$, since $W^\s(\Omega) \subset L^2(\Omega)$ by the definition.
Now we consider the case of $0\in\Omega$.
Let $u \in W^\s(\Omega)$, and let $\phi \in C^\infty_c(\Omega)$ be a function with $\phi \equiv 1$ in a neighborhood of the origin. By (iii) and (v), 
then we have that $\phi u \in \cH^\s_0(\Omega)$, and therefore $\phi u \in L^2(\Omega,|x|^{-2\s})$ by the fractional Hardy inequality.
 Moreover, since $u \in L^2(\Omega)$ and $(1-\phi)u \equiv 0$ in a neighborhood of the origin, we also have $(1-\phi)u \in L^2(\Omega,|x|^{-2\s})$. 
 Consequently, $u = \phi u + (1-\phi) u \in L^2(\Omega,|x|^{-2\s})$.
\end{proof}

We now provide a weak version of Lemma~\ref{relationship-operators-0}.
\begin{lemma}
\label{relationship-operators}
Let $\mu > \mu_0$, $v \in C^2_c(\R^N)$ and $u \in \cH^\s_0(\R^N)$. Then we have
$$
\cE^\s_\mu(u,\Gamma_\mu v) = \int_{\R^N} u (-\Delta)^\s_{\Gamma_\mu} v\,dx.
$$
\end{lemma}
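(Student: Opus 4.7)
My plan is to first verify the identity for smooth compactly supported test functions, where every object is classical and Lemma~\ref{relationship-operators-0} applies directly, and then extend to general $u \in \cH^\s_0(\R^N)$ by density in the $\cE^\s$-seminorm. For $u \in C^\infty_c(\R^N)$, the symmetry of the quadratic form $\cE^\s$ combined with Fubini's theorem yields
\begin{equation*}
\cE^\s(u, \Gamma_\mu v) = \int_{\R^N} (-\Delta)^\s u \cdot \Gamma_\mu v \, dx,
\end{equation*}
which is justified because $(-\Delta)^\s u \in L^\infty(\R^N)$ with $(-\Delta)^\s u(x) = O(|x|^{-N-2\s})$ at infinity, while $\Gamma_\mu v$ has compact support and $\Gamma_\mu \in L^1_{loc}(\R^N)$ since $\tau_+(\s,\mu) > -N/2$. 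Adding the Hardy term gives $\cE^\s_\mu(u, \Gamma_\mu v) = \int_{\R^N}\cL^\s_\mu u \cdot v \, d\gamma_\mu$, where the contribution $\mu \int v u \Gamma_\mu / |x|^{2\s}\,dx$ is absolutely convergent because $\tau_+(\s,\mu) - 2\s > -N$ whenever $\mu > \mu_0$; Lemma~\ref{relationship-operators-0} then identifies this expression with $\int_{\R^N} u (-\Delta)^\s_{\Gamma_\mu} v \, dx$.

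For the extension, I first verify $\Gamma_\mu v \in \cH^\s(\R^N)$: Lemma~\ref{gamma-mu-W-alpha}(i) places $\Gamma_\mu$ in $W^\s(\Omega)$ for every bounded $\Omega$; choosing $\Omega \supset \supp(v)$ smooth and applying Lemma~\ref{u-minus-lemma}(v) gives $\Gamma_\mu v \in W^\s(\Omega)$, and since $\Gamma_\mu v$ vanishes on $\Omega^c$, Lemma~\ref{u-minus-lemma}(iii) upgrades it to $\cH^\s_0(\Omega) \subset \cH^\s(\R^N)$. By the fractional Hardy inequality and $\mu > \mu_0$, the form $\cE^\s_\mu$ is continuous on $\cH^\s(\R^N) \times \cH^\s(\R^N)$, so $\cE^\s_\mu(u_n, \Gamma_\mu v) \to \cE^\s_\mu(u, \Gamma_\mu v)$ for any sequence $u_n \in C^\infty_c(\R^N)$ approximating $u$ in the $\cE^\s$-seminorm. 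For the right-hand side, Proposition~\ref{lambda-estimate} together with a case-by-case inspection of $\Lambda_\mu$ (using $\tau_+(\s,\mu) > (2\s-N)/2$) shows that $(-\Delta)^\s_{\Gamma_\mu} v \in L^q(\R^N)$ with $q = \frac{2N}{N+2\s}$, which is the H\"older conjugate of the critical Sobolev exponent $\frac{2N}{N-2\s}$. Combined with the Sobolev embedding~(\ref{eq:Sobolev-embeddings-0}), which upgrades $\cE^\s$-convergence to $L^{\frac{2N}{N-2\s}}$-convergence, H\"older's inequality yields the convergence of the right-hand side, and both sides of the identity pass to the limit.

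The main technical obstacle is managing the integrability near the origin at each step. Step~1 requires $\cL^\s_\mu u \cdot v \Gamma_\mu$ to be integrable near $0$, while Step~2 requires $(-\Delta)^\s_{\Gamma_\mu} v$ to lie in a Lebesgue space dual to the one provided by the Sobolev embedding for $u$. Both conditions rely on the strict inequality $\mu > \mu_0$, which yields the strict bound $\tau_+(\s,\mu) > (2\s-N)/2$ and thereby the needed slack in the exponents. For the critical case $\mu = \mu_0$, Lemma~\ref{gamma-mu-W-alpha}(ii) already shows $\Gamma_{\mu_0} \notin W^\s(\Omega)$, so this density approach breaks down precisely at the borderline — this is the structural reason why the lemma restricts to $\mu > \mu_0$.
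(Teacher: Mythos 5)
Your proof is correct, but it takes a genuinely different route from the paper. The paper's proof is short and direct: it first shows $\psi := \Gamma_\mu v \in \cH^\s_0(\R^N)$ (exactly as in your second step), then manipulates $\cE^\s_\mu(u,\psi)$ directly for the given $u \in \cH^\s_0(\R^N)$, using Fubini and the symmetry of the form to write it as $\int_{\R^N} u\, \cL^\s_\mu\psi\,dx = \int_{\R^N} u\,(-\Delta)^\s_{\Gamma_\mu}v\,dx$, citing Proposition~\ref{lambda-estimate} to see that all intermediate integrals are absolutely convergent — in particular that $u \in L^{2N/(N-2\s)}$ and $(-\Delta)^\s_{\Gamma_\mu}v \in L^{2N/(N+2\s)}$ pair via H\"older. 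You instead prove the identity first for $u \in C^\infty_c(\R^N)$ (which essentially reduces it to Lemma~\ref{relationship-operators-0}) and then extend by density, using continuity of $\cE^\s_\mu$ on the left side and the same Sobolev–H\"older pairing to pass to the limit on the right. The substance is therefore the same — both proofs rest on Proposition~\ref{lambda-estimate}, Lemma~\ref{gamma-mu-W-alpha}(i), and Lemma~\ref{u-minus-lemma}(v), and both invoke the Sobolev embedding and the integrability of $(-\Delta)^\s_{\Gamma_\mu}v$ in $L^{2N/(N+2\s)}$ — but you pay for the density step (which requires knowing $C^\infty_c(\R^N)$ is dense in the relevant space modulo the seminorm) in exchange for never having to justify Fubini directly for a general $\cH^\s$ function. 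The paper's route is shorter because it bundles the Fubini justification into one sentence; your route is more explicit about exactly which integrability estimates are doing the work, and your closing remark that Lemma~\ref{gamma-mu-W-alpha}(ii) explains the failure at $\mu=\mu_0$ is a correct and useful observation that the paper leaves implicit.
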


\begin{proof} Since $\mu > \mu_0$,  we have that $\psi: = \Gamma_\mu v \in H^\s_0(\R^N)$ by
 Lemma \ref{gamma-mu-W-alpha} and Lemma \ref{u-minus-lemma}(v). Using (\ref{product-prelim}), we then find that
\begin{align}
\cE^\s_\mu (u,\Gamma_\mu v) &=\cE^\s_\mu(u,\psi)= \int_{\R^N}  \frac{\mu}{|x|^{2\s}} u \psi\,dx+ \frac12 \int_{\R^N} \int_{\R^N}\frac{[u(x)-u(y)][\psi(x)-\psi(y)]}{|x-y|^{N+2\s}}dy dx\nonumber\\
&=  \int_{\R^N} u \cL^\s_\mu \psi\,dx = \int_{\R^N} u [(-\Delta)^\s_{\Gamma_\mu} v]\,dx,\label{relationship-operators-intermediate}
\end{align}
as claimed. Here, we used the facts that
$$
u \in \cH^\s_0(\R^N) \subset L^{\frac{2N}{N-2\s}}(\R^N) \quad \text{and}\quad
(-\Delta)^\s_{\Gamma_\mu} v= \cL^\s_\mu \psi \in L^{\frac{2N}{N+2\s}}(\R^N)
$$
by Lemma~\ref{lambda-estimate} to see that all integrals in (\ref{relationship-operators-intermediate}) are well defined in Lebesgue sense.
\end{proof}

\begin{definition}
\label{weak-solution}
Let $f \in L^{\frac{2N}{N+2\s}}(\Omega)$ and $u \in W^\s_*(\Omega)$.
\begin{itemize}
\item[(i)] We say that $\cL^\s_\mu u \ge f$ in $\Omega \setminus \{0\}$ if
$$
\cE^\s_\mu(u,v) \ge \int_{\Omega} f v \,dx \quad \text{for $v \in \cH^\s_0(\Omega)$, $v\ge 0$ with $v \equiv 0$ in a neighborhood of $0$.}
$$
\item[(ii)] We say that $u$ satisfies $\cL^\s_\mu u \le f$ in $\Omega \setminus \{0\}$ if $\cL^\s_\mu (-u) \ge -f$ in the sense of (i).
\item[(iii)] We say that $u$ satisfies $\cL^\s_\mu u = f$ in $\Omega \setminus \{0\}$ if $\cL^\s_\mu u \ge f$ and $\cL^\s_\mu u \le f$.
\end{itemize}
\end{definition}

\begin{remark}
\label{weak-solution-more-regular}
\begin{enumerate}
\item[(i)] If $f \in L^{\frac{2N}{N+2\s}}(\Omega)$ and $u \in W^\s(\Omega)$ satisfies $\cL^\s_\mu u \ge f$ in $\Omega \setminus \{0\}$ in the sense of Definition \ref{weak-solution}, then
$$
\cE^\s_\mu(u,v)  \ge \int_{\Omega} f v \,dx \qquad \text{for all
$v \in \cH^\s_0(\Omega)$, $v\ge 0$.}
$$
This follows by approximation using Lemma \ref{u-minus-lemma}(i) and (vi). Indeed, since $N  \ge 2$, for every $v \in \cH^\s_0(\Omega)$, $v \ge 0$, there exists a sequence of functions $v_n \in \cH^\s_0(\Omega)$, $n \in \N$ with $v_n \ge 0$, $v_n \equiv 0$ in a neighborhood of $0$ and $v_n \to v$ in $\cH^\s_0(\Omega)$, see Appendix B.\\
In this case, we also say that $u$ satisfies $\cL^\s_\mu u \ge f$ in $\Omega$.\\
The same remark applies to the properties $\cL^\s_\mu u \le f$ and $\cL^\s_\mu u = f$.
\item[(ii)] If $f \in L^\infty_{loc}(\overline{\Omega} \setminus \{0\})$ and $u \in L^\infty_{loc}(\R^N \setminus \{0\}) \cap L^1(\R^N,\frac{dx}{1+|x|^{N+2\s}})$ satisfies
 \begin{equation*}
\left \{
\begin{aligned}
\cL^\s_\mu u  = f  \quad  {\rm in}\ \  \Omega \setminus \{0\}, \\
 u  =0    \quad  {\rm in}\ \   \Omega^c\qquad\ %\\
\end{aligned}
\right.
\end{equation*}
in distributional sense, then also $u \in W^\s_*(\Omega)$, and $u$ satisfies $\cL^\s_\mu u  = f$ in $\Omega$ in the sense of 
Definition \ref{weak-solution}. We postpone the proof of this fact to Appendix C.
\end{enumerate}
\end{remark}

\begin{lemma}
\label{Phi-mu-Gamma-mu-sol-property}
For $\mu \ge \mu_0$, we have, in the sense of Definition~\ref{weak-solution} and Remark~\ref{weak-solution-more-regular},
\begin{itemize}
\item[(i)] $\cL^\s_\mu \Phi_\mu = 0$ in $\Omega \setminus \{0\}$;
\item[(ii)] $\cL^\s_{\mu_0} \Gamma_{\mu_0}=0$ in $\Omega \setminus \{0\}$;
\item[(iii)] $\cL^\s_\mu \Gamma_{\mu}=0$ in $\Omega$ for $\mu >\mu_0$.
\end{itemize}
\end{lemma}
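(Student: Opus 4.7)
The plan is to reduce all three statements to the fact that $\Phi_\mu$ and $\Gamma_\mu$ are classical solutions of $\cL^\s_\mu u = 0$ on $\R^N \setminus \{0\}$ (Proposition~\ref{proposition-A}), combined with a nonlocal integration-by-parts identity applied to smooth test functions supported away from the origin and a limiting argument.

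First I treat parts (i) and (ii), where the assertion is ``in $\Omega \setminus \{0\}$''. Let $v \in \cH^\s_0(\Omega)$ with $v \ge 0$ and $v \equiv 0$ in a neighborhood $U$ of the origin. By a standard mollification/cutoff argument, $v$ is the $\cH^\s_0(\Omega)$-limit of a sequence $v_n \in C^\infty_c(\Omega \setminus \{0\})$ whose supports stay in a fixed compact subset of $\Omega \setminus \{0\}$. For such $v_n$, the elementary identity
\begin{equation*}
\cE^\s(u, v_n) = \int_{\R^N} u \,(-\Delta)^\s v_n \, dx
\end{equation*}
holds by Fubini's theorem, since $u = \Phi_\mu$ or $\Gamma_\mu$ lies in $L^1(\R^N,\frac{dx}{1+|x|^{N+2\s}})$ and $(-\Delta)^\s v_n \in L^\infty(\R^N)$ decays like $|x|^{-N-2\s}$ at infinity. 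Because $\cL^\s_\mu u = 0$ pointwise on $\R^N \setminus \{0\}$, we get
\begin{equation*}
\cE^\s_\mu(u,v_n) = \int_{\R^N} u\,(-\Delta)^\s v_n\,dx + \mu \int_{\R^N}\frac{u v_n}{|x|^{2\s}}\,dx = \int_{\R^N} v_n\, \cL^\s_\mu u \, dx = 0,
\end{equation*}
the Hardy integral being absolutely convergent because $v_n$ vanishes on $U$. Passing $n \to \infty$ is legitimate: since $\dist(\supp v_n, 0)$ is uniformly positive, I may choose $\eps>0$ such that $\supp v_n, \supp v \subset \Omega_\eps$, and then use Lemma~\ref{u-minus-lemma}(i) on $\Omega_\eps$ (noting $u \in W^\s(\Omega_\eps)$ via Remark~\ref{gamma-mu-W-alpha-remark}(ii)) to pass to the limit in the quadratic form, while the Hardy term converges because $1/|x|^{2\s}$ is bounded on $\supp v$.

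For part (iii), the test functions are not required to vanish near $0$. Here the key input is that $\Gamma_\mu \in W^\s(\Omega)$ for $\mu > \mu_0$ by Lemma~\ref{gamma-mu-W-alpha}(i). This ensures that $\cE^\s(\Gamma_\mu,v)$ is well-defined for every $v \in \cH^\s_0(\Omega)$ by Lemma~\ref{u-minus-lemma}(i), while the Hardy term $\int_\Omega \frac{\Gamma_\mu v}{|x|^{2\s}}\,dx$ is absolutely convergent because $\Gamma_\mu(x)/|x|^{\s} = |x|^{\tau_+(\s,\mu) - \s} \in L^2_{loc}(\R^N)$ for $\mu>\mu_0$ (since then $\tau_+(\s,\mu) > (2\s-N)/2$, strictly), combined with $v/|x|^{\s} \in L^2(\Omega)$ from the fractional Hardy inequality (\ref{eq:fractional-hardy-C-infty}). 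I then invoke Remark~\ref{weak-solution-more-regular}(i): it suffices to verify the weak equation against nonnegative $v \in \cH^\s_0(\Omega)$ with $v \equiv 0$ near $0$, and for such $v$ the argument of the previous paragraph applies verbatim with $u = \Gamma_\mu$.

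The delicate point, and the reason that the case $\mu=\mu_0$ is excluded from (iii), is precisely the integrability of the Hardy contribution: the strict inequality $\tau_+(\s,\mu) > (2\s-N)/2$ is needed both to make sense of $\int_\Omega \frac{\Gamma_\mu v}{|x|^{2\s}}\,dx$ for arbitrary $v \in \cH^\s_0(\Omega)$ and to pass to the limit $n \to \infty$ along the approximating sequence provided by Remark~\ref{weak-solution-more-regular}(i); at $\mu=\mu_0$ the weight $\Gamma_{\mu_0}$ ceases to belong to $W^\s(\Omega)$ (Lemma~\ref{gamma-mu-W-alpha}(ii)), and neither formulation survives. All the remaining verifications are essentially bookkeeping of the integrability thresholds using Proposition~\ref{proposition-A} and Lemma~\ref{u-minus-lemma}.
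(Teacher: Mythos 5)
Your proposal is correct and follows the same route as the paper's (very terse) proof: for (i) and (ii), reduce to the fact that $\Phi_\mu$ and $\Gamma_\mu$ solve $\cL^\s_\mu u=0$ classically in $\R^N\setminus\{0\}$ (Proposition~\ref{proposition-A}) and then pass from test functions in $C^\infty_c(\Omega\setminus\{0\})$ to nonnegative $v\in\cH^\s_0(\Omega)$ vanishing near $0$ by density, using $\Phi_\mu,\Gamma_\mu\in W^\s_*(\Omega)$ from Remark~\ref{gamma-mu-W-alpha-remark}(ii) and Lemma~\ref{u-minus-lemma}(i) to control the bilinear form; for (iii), upgrade from $\Omega\setminus\{0\}$ to $\Omega$ via Remark~\ref{weak-solution-more-regular}(i), which requires $\Gamma_\mu\in W^\s(\Omega)$, i.e.\ Lemma~\ref{gamma-mu-W-alpha}(i), and this is precisely what fails at $\mu=\mu_0$. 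The paper handles the absolute convergence of the Hardy term through Lemma~\ref{u-minus-lemma}(vi) (which gives $W^\s(\Omega)\subset L^2(\Omega,|x|^{-2\s}dx)$) rather than through the explicit computation $\Gamma_\mu/|x|^\s=|x|^{\tau_+(\s,\mu)-\s}\in L^2_{loc}$ that you use; both yield the same bound and the difference is cosmetic.
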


\begin{proof}
(i) and (ii) follow directly from {\it Theorem A} and Remark~\ref{gamma-mu-W-alpha-remark}, using that every function
$v \in \cH^\s_0(\Omega)$ with $v \equiv 0$ in a neighborhood of $0$ can be approximated by functions in $C^\infty_c(\Omega \setminus \{0\})$.\\
(iii) follows by Remark~\ref{weak-solution-more-regular} and Lemma~\ref{gamma-mu-W-alpha}.
\end{proof}

\begin{lemma} (Comparison principle, Version 1)\\
\label{sec:dirichl-probl-bound-comp-1}
Let $u \in W^\s(\Omega)$ satisfy $\cL^\s_\mu u \ge 0$ in $\Omega$ and $u \ge 0$ in $\Omega^c$. Then 
$u \ge 0$ in $\R^N$.
\end{lemma}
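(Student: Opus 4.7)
The plan is to test the weak inequality $\cL^\s_\mu u \ge 0$ against the negative part $u^-$, combine the resulting inequality with the energy inequality of Lemma~\ref{u-minus-lemma}(iv), and then invoke coercivity coming from the fractional Hardy inequality with remainder term~(\ref{eq:fractional-hardy-remainder}) to force $u^- \equiv 0$.

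In more detail, the first step is to observe that since $u \in W^\s(\Omega)$ and $u \ge 0$ in $\Omega^c$, Lemma~\ref{u-minus-lemma}(iv) gives that $u^- \in \cH^\s_0(\Omega)$ together with the energy inequality
\begin{equation*}
\cE^\s(u^-,u^-) \le -\cE^\s(u,u^-).
\end{equation*}
Since $u^- \in \cH^\s_0(\Omega)$ is nonnegative, Remark~\ref{weak-solution-more-regular}(i) allows me to use $u^-$ as a test function in the weak formulation of $\cL^\s_\mu u \ge 0$ in $\Omega$, which yields
\begin{equation*}
\cE^\s(u,u^-) + \mu \int_{\Omega} \frac{u\,u^-}{|x|^{2\s}}\,dx \ge 0.
\end{equation*}
Note that the integral above is well defined since $u^- \in L^2(\Omega,|x|^{-2\s}dx)$ by the Hardy inequality and $uu^- = -(u^-)^2$ pointwise (because $u^+ u^- = 0$). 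Substituting this identity and combining with the previous energy inequality, I obtain
\begin{equation*}
\cE^\s(u^-,u^-) \le -\cE^\s(u,u^-) \le -\mu \int_{\Omega} \frac{u\,u^-}{|x|^{2\s}}\,dx = -\mu \int_{\Omega} \frac{(u^-)^2}{|x|^{2\s}}\,dx,
\end{equation*}
i.e., $\cE^\s_\mu(u^-,u^-) \le 0$.

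For the final step, I use the improved Hardy inequality~(\ref{eq:fractional-hardy-remainder}): for every $v \in \cH^\s_0(\Omega)$ one has $\cE^\s_{\mu_0}(v,v) \ge C\|v\|_{L^2(\Omega)}^2$. Writing $\cE^\s_\mu(v,v) = \cE^\s_{\mu_0}(v,v) + (\mu-\mu_0)\int_\Omega \frac{v^2}{|x|^{2\s}}\,dx$ and using $\mu \ge \mu_0$, this gives
\begin{equation*}
\cE^\s_\mu(u^-,u^-) \ge C\|u^-\|_{L^2(\Omega)}^2.
\end{equation*}
Combined with $\cE^\s_\mu(u^-,u^-) \le 0$, this forces $u^- \equiv 0$ in $\Omega$, and hence in $\R^N$ since $u^- \equiv 0$ in $\Omega^c$.

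The argument is essentially a clean application of the tools already assembled; the only step that requires some care is verifying that all integrals on the right-hand side of the intermediate inequality are legitimate, which follows from Lemma~\ref{u-minus-lemma}(i),(vi) together with the fact that $u^- \in \cH^\s_0(\Omega)$. No deeper obstacle is anticipated.
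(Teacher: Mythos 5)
Your proof is correct and follows essentially the same route as the paper: apply Lemma~\ref{u-minus-lemma}(iv) to get $u^- \in \cH^\s_0(\Omega)$ with $\cE^\s(u^-,u^-) \le -\cE^\s(u,u^-)$, test $\cL^\s_\mu u \ge 0$ against $u^-$, use $uu^- = -(u^-)^2$ to arrive at $\cE^\s_\mu(u^-,u^-) \le 0$, and invoke the improved Hardy inequality (\ref{eq:fractional-hardy-remainder}) to force $u^- \equiv 0$. One cosmetic remark: the middle term of your displayed chain should read $\mu\int_\Omega \frac{uu^-}{|x|^{2\s}}\,dx$ rather than $-\mu\int_\Omega \frac{uu^-}{|x|^{2\s}}\,dx$ (the rearranged test inequality gives $-\cE^\s(u,u^-) \le \mu\int_\Omega \frac{uu^-}{|x|^{2\s}}\,dx$, and then $uu^- = -(u^-)^2$ yields the final term); the extremes of the chain and the conclusion are correct as stated.
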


\begin{proof}
By Lemma~\ref{u-minus-lemma}(iii) and the assumption, it follows that
$u^- \in \cH_0^\s(\Omega)$ satisfies
\begin{align*}
-\cE^\s(u^-,u^-) &\ge \cE^\s(u,u^-) \ge -\mu \int_{\Omega}\frac{u(x)u^-(x)}{|x|^{2\s}}\,dx = \mu \int_{\Omega}\frac{|u^-(x)|^2}{|x|^{2\s}}\,dx.
\end{align*}
Since $\mu \ge \mu_0$, it thus follows from (\ref{eq:fractional-hardy-remainder}) that $u^- \equiv 0$. Hence the claim follows.
\end{proof}

\begin{lemma} (Comparison principle, Version 2)\\
\label{sec:dirichl-probl-bound-comp}
Let $u \in W^\s_*(\Omega)$ satisfy $\cL^\s_\mu u \ge 0$ in $\Omega \setminus \{0\}$ and
\begin{equation}
  \label{eq:positivity-condition-boundary}
u \ge 0 \quad \text{in $\Omega^c$}, \qquad \liminf_{x \to 0} \frac{u(x)}{\Phi_\mu(x)} \ge 0.
\end{equation}
Then  $u \ge 0$ in $\R^N$.
\end{lemma}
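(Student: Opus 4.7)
The plan is to reduce to Version~1 (Lemma~\ref{sec:dirichl-probl-bound-comp-1}) via the perturbation $u_\eps := u + \eps\Phi_\mu$. First I would reduce, by a preliminary rescaling, to the case $\overline\Omega\subset B_{1/2}$, so that $\Phi_\mu>0$ on $\overline\Omega$ in both regimes (immediate from $\Phi_\mu(x)=|x|^{\tau_-(\s,\mu)}$ when $\mu>\mu_0$; when $\mu=\mu_0$, from $\Phi_{\mu_0}(x)=-|x|^{(2\s-N)/2}\ln|x|>0$ on $B_1\setminus\{0\}$). For $\eps>0$ one has $u_\eps\in W^\s_*(\Omega)$ by Remark~\ref{gamma-mu-W-alpha-remark}(ii), and $\cL^\s_\mu u_\eps\ge 0$ weakly in $\Omega\setminus\{0\}$ by Lemma~\ref{Phi-mu-Gamma-mu-sol-property}(i). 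The liminf hypothesis then yields $r_\eps\in(0,1/2)$ with $u_\eps\ge(\eps/2)\Phi_\mu>0$ on $B_{r_\eps}\setminus\{0\}$.

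Next I would set $w_\eps := u_\eps^-\chi_\Omega$ (extended by zero outside $\Omega$). Since $w_\eps\equiv 0$ on $B_{r_\eps}\cup\Omega^c$, Lemma~\ref{u-minus-lemma}(ii),(iii),(v) gives $w_\eps\in\cH^\s_0(\Omega)$, with $w_\eps$ vanishing in a neighborhood of the origin, so that $w_\eps$ is admissible in Definition~\ref{weak-solution}. Testing $w_\eps$ against $u_\eps$, decomposing $u_\eps^- = w_\eps+u_\eps^-\chi_{\Omega^c}$, and using $\cE^\s(u_\eps^+,w_\eps)\le 0$ (which holds pointwise since $u_\eps^+\cdot w_\eps\equiv 0$), I expect to arrive at
\[
\cE^\s_\mu(w_\eps,w_\eps)\le T_\eps,\qquad T_\eps:=C_{N,\s}\int_{\Omega^c}\int_\Omega\frac{u_\eps^-(x)\,w_\eps(y)}{|x-y|^{N+2\s}}\,dy\,dx\ge 0.
\]
The Hardy inequality with remainder~(\ref{eq:fractional-hardy-remainder}), combined with $\mu\ge\mu_0$, will then give $C\|w_\eps\|_{L^2(\Omega)}^2\le T_\eps$.

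For $\mu>\mu_0$ the function $\Phi_\mu=|\cdot|^{\tau_-(\s,\mu)}$ is positive on $\R^N\setminus\{0\}$, hence $u_\eps\ge 0$ on $\Omega^c$, so $u_\eps^-\chi_{\Omega^c}\equiv 0$, $T_\eps=0$, $w_\eps\equiv 0$, and $u+\eps\Phi_\mu\ge 0$ a.e.; letting $\eps\to 0$ finishes the proof.

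The main obstacle is the case $\mu=\mu_0$, where $\Phi_{\mu_0}<0$ on $\{|x|>1\}\subset\Omega^c$ and so $u_\eps^-(x)\le\eps|\Phi_{\mu_0}(x)|$ there. To control $T_\eps$ I would use $|x-y|\ge|x|/2$ for $|x|\ge 1$ and $y\in\Omega\subset B_{1/2}$, together with the convergence of $\int_{|x|>1}|\Phi_{\mu_0}(x)|/|x|^{N+2\s}\,dx$ (finite since $|\Phi_{\mu_0}(x)|\sim|x|^{(2\s-N)/2}\ln|x|$ at infinity), to obtain $T_\eps\le C\eps\|w_\eps\|_{L^1(\Omega)}\le C'\eps\|w_\eps\|_{L^2(\Omega)}$. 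Combined with $C\|w_\eps\|_{L^2}^2\le T_\eps$, this gives $\|w_\eps\|_{L^2(\Omega)}=O(\eps)$; since $w_\eps\to u^-\chi_\Omega$ pointwise a.e.\ as $\eps\to 0$, I would then conclude $u^-\chi_\Omega\equiv 0$, which together with $u\ge 0$ on $\Omega^c$ yields $u\ge 0$ in $\R^N$.
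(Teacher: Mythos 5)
Your proof is correct, and in the critical case $\mu=\mu_0$ it is in fact more careful than the argument the paper gives. The paper perturbs to $w := u + \delta\Phi_\mu$, asserts that ``we may choose $\eps>0$ such that $u \ge -\delta\Phi_\mu$ on $\R^N\setminus\Omega_\eps$,'' and then invokes Lemma~\ref{sec:dirichl-probl-bound-comp-1}. That assertion follows trivially from \eqref{eq:positivity-condition-boundary} when $\mu>\mu_0$, since $\Phi_\mu = |\cdot|^{\tau_-(\s,\mu)}>0$ everywhere. When $\mu=\mu_0$, however, $\Phi_{\mu_0}(x) = -|x|^{(2\s-N)/2}\ln|x|$ is negative for $|x|>1$, so on $\Omega^c\cap B_1^c$ (nonempty, since $\Omega$ is bounded) the required inequality $u \ge -\delta\Phi_{\mu_0} = \delta|\Phi_{\mu_0}|>0$ does not follow from $u\ge 0$ alone, and fails in the typical application $u\equiv 0$ on $\Omega^c$. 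You rightly identify this as ``the main obstacle.'' Your fix --- test with the truncation $w_\eps=u_\eps^-\chi_\Omega$ so the Hardy estimate runs cleanly, isolate the tail $T_\eps$ coming from $u_\eps^-\chi_{\Omega^c}$, and bound $T_\eps\le C\eps\|w_\eps\|_{L^2(\Omega)}$ using the decay of $\Phi_{\mu_0}$ at infinity --- gives $\|w_\eps\|_{L^2(\Omega)}=O(\eps)$ by \eqref{eq:fractional-hardy-remainder}, hence $u^-\equiv 0$. So instead of applying Lemma~\ref{sec:dirichl-probl-bound-comp-1} as a black box (which requires $\Phi_\mu\ge 0$ on $\Omega_\eps^c$), you unpack the same energy computation and absorb the sign change of $\Phi_{\mu_0}$ into a quantitative remainder; this is exactly the extra step that the paper's one-line reduction does not address when $\mu=\mu_0$.

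One small correction: $w_\eps=u_\eps^-\chi_\Omega\in\cH^\s_0(\Omega)$ does not follow from Lemma~\ref{u-minus-lemma}(v), since $\chi_\Omega$ is not Lipschitz. The conclusion still holds, and is worth spelling out. After the rescaling $\overline\Omega\subset B_{1/2}$ one has $\Phi_\mu>0$ on $B_1$ in both regimes, so $u_\eps = u+\eps\Phi_\mu\ge 0$ on $B_1\cap\Omega^c$ and thus $u_\eps^-\equiv 0$ on $B_1\setminus\Omega$. Hence $w_\eps$ coincides with $u_\eps^-$ on every $\Omega'$ with $\Omega\subset\subset\Omega'\subset B_1$. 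Applying Lemma~\ref{u-minus-lemma}(ii) to $u_\eps\in W^\s(\Omega_{r_\eps/2})$ with such an $\Omega'$ gives $\cE^\s_{\Omega_{r_\eps/2},\Omega'}(w_\eps,w_\eps)=\cE^\s_{\Omega_{r_\eps/2},\Omega'}(u_\eps^-,u_\eps^-)<\infty$, so $w_\eps\in W^\s(\Omega_{r_\eps/2})$; and since $w_\eps\equiv 0$ on $\Omega_{r_\eps/2}^c$, Lemma~\ref{u-minus-lemma}(iii) yields $w_\eps\in\cH^\s_0(\Omega_{r_\eps/2})\subset\cH^\s_0(\Omega)$, as needed.
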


\begin{proof}
Let $\delta>0$. By (\ref{eq:positivity-condition-boundary}), we may choose
$\eps >0$ such that $u \ge -\delta \Phi_\mu$ on $\R^N \setminus \Omega_\eps$. By assumption, $w:= u + \delta \Phi_\mu \in W^\s(\Omega_\eps)$ satisfies
$\cL^\s_\mu w \ge 0$ in $\Omega_\eps$, and we also have that $w \ge 0$ on $\R^N \setminus \Omega_\eps$. By Lemma~\ref{sec:dirichl-probl-bound-comp-1}, we thus conclude that $w \ge 0$, so that $u \ge - \delta \Phi_\mu$. Since $\delta>0$ was chosen arbitrarily, we conclude that $u \ge 0$.
\end{proof}

The following is the main result of this section. 

\begin{theorem}
\label{sec:dirichl-probl-bound-comp-corol}
Let $\mu\ge\mu_0$ and $f \in L^\infty(\Omega, |x|^{\rho}dx) $ for some $\rho < 2\s - \tau_+(\s,\mu)$.
Then there exists a unique solution $u \in W^\s_{*}(\Omega)$ for the problem
\begin{equation}\label{eq 2.1f}
\cL^\s_\mu u = f \quad  {\rm in}\ \, \Omega \setminus \{0\}, \qquad \ 
u =0 \quad   {\rm in}\ \, \Omega^c,\qquad \  
\lim_{x \to 0}\frac{u(x)} {\Phi_\mu(x)}= 0.
\end{equation}
Moreover: 
\begin{itemize}
\item[(i)] $u \ge 0$  if   $f \ge 0$.
\item[(ii)] There exists a constant $c=c(\s,\mu,\rho,\Omega)>0$ such that
\begin{equation}
  \label{eq:Gamma-mu-comp}
\frac{|u(x)|}{\Gamma_\mu(x)} \le c \|f\|_{L^\infty(\Omega,\,|x|^\varrho dx)} \qquad \text{for all $x \in \Omega \setminus \{0\}.$}
\end{equation}
Moreover, if $\mu_1>\mu_0$ is fixed with $\varrho < 2\s - \tau_+(\s,\mu_1)$ and $\mu \in [\mu_0,\mu_1]$, then $c$ can be chosen such that it only depends on $\s,\mu_1,\varrho$ and $\Omega$.
\item[(iii)] If $g \in L^\infty(\Omega, |x|^{\rho}dx)$, then 
\begin{equation}\label{g-estimate}
\int_{\Omega} u g \,dx \le c\, \|g\|_{L^\infty(\Omega, |x|^{\rho}dx)} \,\norm{f}_{L^1(\Omega,d\gamma_\mu)} \qquad \text{with $c>0$ as in (ii).}
\end{equation}
Here $d\gamma_\mu = \Gamma_\mu dx$ as before.
\item[(iv)] If $\mu >\mu_0$, we have $u \in \cH^\s_0(\Omega)$ and
  \begin{equation}
    \label{eq:weak-identity}
\cE^\s_\mu(u,v)= \int_{\Omega}f v\,dx\qquad \text{for all $v \in \cH^\s_0(\Omega)$.}
  \end{equation}
\item[(v)] $u$ verifies the distributional identity
\begin{equation}
  \label{eq:distributional-0}
\int_{\Omega}u_\mu   (-\Delta)^\s_{\Gamma_\mu} \xi \,dx = \int_{\Omega}f \xi \,d\gamma_\mu \qquad \text{for all $\xi \in \cC^2_c(\Omega)$,}
\end{equation}
and satisfies the estimate
\begin{equation}\label{l1}
\norm{u}_{L^1(\Omega,\Lambda _\mu dx)}\le d\norm{f}_{L^1(\Omega,d\gamma_\mu)} \qquad \text{with some constant $d= d(\s,\mu,\rho,\Omega)>0$.}
\end{equation}
\end{itemize}
Here we recall that we have set $d\gamma_\mu = \Gamma_\mu dx$.
\end{theorem}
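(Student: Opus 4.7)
My plan is to prove the theorem in stages, first in the subcritical case $\mu>\mu_0$ with bounded $f$, then extending to general $f$, and finally to the critical case $\mu=\mu_0$ by an approximation $\mu_n\downarrow\mu_0$ controlled by the a priori bound~(ii). Uniqueness is immediate: if $u_1,u_2$ both solve~(\ref{eq 2.1f}), then $w=u_1-u_2$ lies in $W^\s_*(\Omega)$, solves $\cL^\s_\mu w=0$ in $\Omega\setminus\{0\}$, vanishes in $\Omega^c$, and satisfies $\lim_{x\to 0}w(x)/\Phi_\mu(x)=0$, so Lemma~\ref{sec:dirichl-probl-bound-comp} applied to both $w$ and $-w$ forces $w\equiv 0$.

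For existence with $\mu>\mu_0$ and bounded $f$, I would invoke the Riesz representation theorem on the bilinear form $\cE^\s_\mu$, which is coercive on $\cH^\s_0(\Omega)$ by the Hardy inequality with remainder~(\ref{eq:fractional-hardy-remainder}). This directly yields property~(iv), and the regularity theory recalled in Appendix~A promotes the resulting $u\in\cH^\s_0(\Omega)$ to $L^\infty_{loc}(\R^N\setminus\{0\})$. To treat general $f\in L^\infty(\Omega,|x|^\rho dx)$ I would truncate $f$ monotonically and pass to the limit using the uniform bound supplied by~(ii); for $\mu=\mu_0$ I would then approximate by $\mu_n\downarrow\mu_0$, exploiting precisely the uniformity of the constant in~(ii) stated in the theorem. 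The asymptotic relation $u/\Phi_\mu\to 0$ at the origin then comes for free from~(ii), since $\Gamma_\mu/\Phi_\mu\to 0$ as $x\to 0$ in both the subcritical case (where $\Gamma_\mu/\Phi_\mu=|x|^{\tau_+(\s,\mu)-\tau_-(\s,\mu)}$) and the critical case (where $\Gamma_{\mu_0}/\Phi_{\mu_0}=-1/\log|x|$).

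The central task, and what I expect to be the principal obstacle, is the a priori estimate~(ii). The strategy is to construct an explicit comparison function $V\ge 0$ with $\cL^\s_\mu V\ge\|f\|_{L^\infty(\Omega,|x|^\rho dx)}|x|^{-\rho}$ in $\Omega\setminus\{0\}$ and at the same time $V\le c\,\Gamma_\mu$ on $\Omega$. Lemma~\ref{fractional-power} together with the concavity of $c_\s$ provides the identity $\cL^\s_\mu(|\cdot|^\tau)=(c_\s(\tau)+\mu)|x|^{\tau-2\s}$ with a strictly positive coefficient whenever $\tau\in(\tau_-(\s,\mu),\tau_+(\s,\mu))$, and the hypothesis $\rho<2\s-\tau_+(\s,\mu)$ is precisely tailored so that the resulting exponent dominates $-\rho$. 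Since the bare power $|x|^\tau$ with $\tau<\tau_+$ exceeds $\Gamma_\mu$ near the origin, I would have to correct it with an appropriately chosen Dirichlet-type modification of $\Gamma_\mu$ (or work with a Green's function ansatz) and then invoke Lemma~\ref{sec:dirichl-probl-bound-comp-1}. The uniformity of the constant for $\mu\in[\mu_0,\mu_1]$ will rest on the continuity of $\tau_\pm$ in~$\mu$.

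The remaining properties then fall into place. Positivity~(i) follows from Lemma~\ref{sec:dirichl-probl-bound-comp} applied to $u$ and the zero function. Property~(v) follows from Lemma~\ref{relationship-operators} by testing the variational identity~(iv) against $\xi\,\Gamma_\mu$, which belongs to $\cH^\s_0(\Omega)$ for $\mu>\mu_0$ by Lemma~\ref{gamma-mu-W-alpha}(i) combined with Lemma~\ref{u-minus-lemma}(v); the critical case $\mu=\mu_0$ is handled by the same approximation, with dominated convergence justified by the pointwise bound~(\ref{1.3}) and the integrability in $d\gamma_\mu$. Property~(iii) follows by duality: for $g\in L^\infty(\Omega,|x|^\rho dx)$ I would solve the analogous problem to obtain $v$ with $|v|\le c\|g\|_{L^\infty(\Omega,|x|^\rho dx)}\Gamma_\mu$ via~(ii), and then rewrite $\int_\Omega u g\,dx=\int_\Omega f v\, d\gamma_\mu$ using~(v). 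Finally, estimate~(\ref{l1}) follows from~(v) by testing against a family of $\cC^2_c(\Omega)$ functions approximating a signed weight, combined with the pointwise bound~(\ref{1.3}) on $(-\Delta)^\s_{\Gamma_\mu}\xi$ to extract the factor $\Lambda_\mu$.
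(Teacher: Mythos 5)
Your outline agrees with the paper's proof in its broad architecture (Riesz representation for $\mu>\mu_0$, comparison principle for uniqueness and positivity, approximation $\mu_n\downarrow\mu_0$ for the critical case, duality for~(iii), deriving~(\ref{l1}) from a weighted estimate), but the key barrier construction for~(ii) is not viable as you describe it. You propose a comparison function built from $|x|^\tau$ with $\tau\in(\tau_-(\s,\mu),\tau_+(\s,\mu))$, so that the coefficient $c_\s(\tau)+\mu$ is positive. However, for $\tau<\tau_+(\s,\mu)$ the power $|x|^\tau$ blows up \emph{strictly faster} than $\Gamma_\mu(x)=|x|^{\tau_+(\s,\mu)}$ as $x\to 0$, so any nonnegative barrier $V$ incorporating such a term and satisfying $\cL^\s_\mu V\gtrsim |x|^{-\rho}$ cannot also satisfy $V\le c\,\Gamma_\mu$ near the origin; the comparison $|u|\le V$ then fails to deliver~(\ref{eq:Gamma-mu-comp}), which is precisely a statement about the sharp rate $\Gamma_\mu$ at $0$. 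The vague remark that you would ``correct'' $|x|^\tau$ by a Dirichlet-type modification of $\Gamma_\mu$ cannot resolve this: adding a multiple of $\Gamma_\mu$ only lowers the barrier by $O(\Gamma_\mu)$, which is lower order than $|x|^\tau$, and subtracting $|x|^\tau$ destroys nonnegativity near the origin and flips the supersolution inequality.

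The correct choice is the opposite one: use the exponent $2\s-\rho$, which by hypothesis satisfies $2\s-\rho>\tau_+(\s,\mu)$, so $c_\s(2\s-\rho)+\mu<0$ and $|x|^{2\s-\rho}=o(\Gamma_\mu(x))$ as $x\to 0$. Because the coefficient is negative, the correction enters with a \emph{minus} sign: after a smooth cutoff $\eta_\sigma$ at scale $1/\sigma$ to control the far field, one takes $v_\sigma(x)=-\kappa_1\eta_\sigma(x)|x|^{2\s-\rho}$ and verifies $\cL^\s_\mu v_\sigma\ge f$ in $\Omega$ for $\sigma$ small. Since $v_\sigma\le 0$, one then adds $c\Gamma_\mu$ (which is annihilated by $\cL^\s_\mu$) with $c$ large enough to make $w:=c\Gamma_\mu+v_\sigma\ge 0$; this is possible exactly because $|x|^{2\s-\rho}$ is lower order than $\Gamma_\mu$. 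Then $w\ge 0$, $w\le c\Gamma_\mu$, and $\cL^\s_\mu w=\cL^\s_\mu v_\sigma\ge f$, so Lemma~\ref{sec:dirichl-probl-bound-comp-1} applied to $w-u$ yields $u\le c\Gamma_\mu$. The uniformity of $c$ for $\mu\in[\mu_0,\mu_1]$ comes from monotonicity of $\tau_+$ and the strict inequality $c_\s(2\s-\rho)+\mu\le c_\s(2\s-\rho)+\mu_1<0$. Your remaining steps (monotone truncation, the $\mu_n\downarrow\mu_0$ limit, testing $\xi\Gamma_\mu$ for~(v) when $\mu>\mu_0$ and passing to the limit for $\mu=\mu_0$, duality for~(iii), $g=\Lambda_\mu$ for~(\ref{l1})) match the paper once~(ii) is in place, though the paper derives~(\ref{l1}) more directly by choosing $g=\Lambda_\mu$ in~(iii) rather than by testing against approximating $C^2_c$ functions.
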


\begin{proof}
To see the uniqueness of solutions of (\ref{eq 2.1f}), let $u_1,u_2 \in W^\s_{*}(\Omega)$ be solutions. Applying Lemma~\ref{sec:dirichl-probl-bound-comp} to $u_1-u_2$ and $u_2-u_1$, we find that $u_1 \equiv u_2$.

To prove the existence and asserted properties, we may from now on assume that $f \ge 0$, since in the general case we can decompose $f= f^+- f^-$. In this case we deduce that $u \ge 0$ once we have proved the existence of the (unique) solution $u$ of (\ref{eq 2.1f}), so (i) holds. Moreover, since $\tau_+(\s,\mu)<2\s$ for all $\mu \in [\mu_0,\infty)$, we may assume that $\varrho>0$ in the following. Finally, by homogeneity, we may assume that $\|f\|_{L^\infty(\Omega,|x|^\varrho dx)}=1$.
To prove the existence, we first consider the case where $\mu>\mu_0$. In this case, the Riesz representation theorem -- together with the fractional Hardy inequality -- immediately yields the existence of $u  \in \cH^\s_0(\Omega)$ such that
\begin{equation}
  \label{eq:weak-sol-prop}
\cE^\s_\mu(u,v) = \int_{\Omega} f v \,dx\qquad \text{for all $v \in \cH^\s_0(\Omega)$,}
\end{equation}
as claimed in (iii). In particular, we have $\cL^\s_{\mu}u = f$ in $\Omega$ and $u \equiv 0$ in $\Omega^c$. We also note that the condition $\lim_{|x| \to 0} \frac{u(x)}{\Phi_\mu(x)} = 0$ follows once we have proved (\ref{eq:Gamma-mu-comp}). To prove (\ref{eq:Gamma-mu-comp}), we fix $\mu_1>\mu_0$ and $\rho < 2\s - \tau_+(\s,\mu_1)$.
For $\mu \in (\mu_0,\mu_1]$, then we have that
\begin{equation}
  \label{eq:mu-c-estimate}
\mu +c_\s(2\s-\varrho)\le \mu_1 +c_\s(2\s-\varrho) < \mu_1 + c_\s(\tau_+(\s,\mu_1))=0,
\end{equation}
since $2\s- \varrho > \tau_+(\s,\mu_1)$ and the function $c_\s$ is strictly decreasing on $[\frac{2\s-N}{2},2\s)$ by Lemma~\ref{c-function}. Moreover, we recall from Remark \ref{eq:Delta-alpha-homogeneous} that the function $\phi_{2\s-\varrho} \in W^\s(\Omega)$ satisfies
$$
\cL^\s_\mu  \phi_{2\s-\varrho}  (x) = [\mu + c_\s (2\s-\varrho)] |x|^{-\varrho}
\le [\mu_1 + c_\s (2\s-\varrho)] |x|^{-\varrho} \quad \text{in $\Omega$.}
$$
By (\ref{eq:mu-c-estimate}), we may thus fix $\kappa_1>0$ sufficiently large depending only on $\mu_1$ so that
$$
\cL^\s_\mu \bigl(-\kappa_1 \phi_{2\s-\varrho} \bigr)(x) \ge |x|^{-\varrho} +1 \qquad \text{for $x \in \Omega$, $\mu \in (\mu_0,\mu_1]$.}
$$
We now let $\eta \in C^\infty_c(\R^N)$ be a radial function with $0 \le \eta \le 1$, $\eta \equiv 1$ in $B_1$ and $\eta \equiv 0$ in $\R^N \setminus B_2$. We also let $\eta_\delta \in C^\infty_c(\R^N)$ be defined by
$\eta_\delta(x)= \eta(\delta x)$, and we define
$$
v_\delta \in W^\s(\Omega), \qquad v_\delta(x)= - \kappa_1  \eta(\delta x) |x|^{2\s-\varrho}.
$$
If $\sigma >0$ is chosen small enough such that $\Omega \subset B_{\frac{1}{2\sigma}}$, we have, for $x \in \Omega$,
\begin{align*}
\cL^\s_\mu v_\sigma(x) &=
 \eta(\sigma x) \cL^\s_\mu \bigl(-\kappa_1 |\cdot|^{2\s-\varrho}\bigr)(x)
- \kappa_1 \int_{\R^N} (\eta(\sigma x)-\eta(\sigma y))|y|^{2\s-\varrho} j_\s(x-y)dy\biggr)\\
&\ge |x|^{-\rho} +1 - 2\kappa_1 \int_{\R^N} (1-\eta(\sigma y))|y|^{2\s-\varrho}j_\s(x-y)dy\\
&\ge  f(x) + 1 -  \kappa_1 \sigma^{\varrho} \int_{\R^N} (1-\eta(y))|y|^{2\s-\varrho} j_\s(\sigma x - y)dy\\
&=  f(x) + 1 -   \kappa_1 \sigma^{\varrho} \int_{\R^N \setminus B_1} (1-\eta(y))|y|^{\varrho}j_\s(\sigma x - y)dy\\
&\ge  f(x) + 1 - \kappa_2 \sigma^{\varrho},
\end{align*}
where
$\kappa_2:= \kappa_1 \int_{\R^N \setminus B_1} (1-\eta(y))
(|y|-\frac{1}{2})^{-N-2\s}dy$. 
Consequently, we may fix $\sigma>0$ sufficiently small (independently of $\mu$ and $f$) to guarantee that
$$
\cL^\s_\mu v_\sigma \ge f \qquad \text{in $\Omega\quad $ for $\mu \in (\mu_0,\mu_1]$}.
$$
We now define the function
$$
w \in W^\s(\Omega), \quad w(x)= c \Gamma_\mu(x)+v_\sigma(x) \qquad \text{with $c:=\kappa_1 \bigl(\frac{2}{\sigma}\bigr)^{2\s-\varrho-\tau_+(\s,\mu_0)}.$}
$$
Since $2\s-\varrho > \tau_+(\s,\mu)$, we have that
\begin{align*}
w_c(x) &\ge |x|^{\tau_+(\s,\mu)}\Bigl(c-\kappa_1 1_{B_{\frac{2}{\sigma}}}(x)|x|^{2\s-\varrho-\tau_+(\s,\mu)}\Bigr)
\ge |x|^{\tau_+(\s,\mu)}\Bigl(c- \kappa_1 \bigl(\frac{2}{\sigma}\bigr)^{2\s-\varrho-\tau_+(\s,\mu)}\Bigr)\\
&= |x|^{\tau_+(\s,\mu)}\kappa_1 \bigl(\frac{2}{\sigma}\bigr)^{2\s-\varrho -\tau_+(\s,\mu_0)} \Bigl(1 -\bigl(\frac{2}{\sigma}\bigr)^{\tau_+(\s,\mu_0)-\tau_+(\s,\mu)}\Bigr)\ge 0
\end{align*}
for $x\in\R^N \setminus \{0\}$, since $\tau_+(\s,\mu_0)-\tau_+(\s,\mu) \le 0$. Since also
$$
\cL^\s_\mu w \ge f \quad \text{in $\Omega$},
$$
we apply Lemma~\ref{sec:dirichl-probl-bound-comp-1} to $w-u$ and find that $u \le w \le c \Gamma_\mu$ a.e. in $\Omega$. Hence we have shown (\ref{eq:Gamma-mu-comp}) for $\mu \in (\mu_0,\mu_1]$ with a constant $c=c(\s,\mu_1,\rho,\Omega)$.

We now prove the distributional identity (\ref{eq:distributional-0}) for $\mu>\mu_0$. For $\xi \in C^2_c(\Omega)$,  $\phi= \xi \Gamma_\mu$ and $u \in H^1_0(\Omega)$, Lemma~\ref{relationship-operators} and (\ref{eq:weak-sol-prop}) imply that
$$
\int_{\R^N} u (-\Delta)^\s_{\Gamma_\mu}\xi \,dx = \cE^\s_\mu(u, \phi)= \int_{\R^N}f \phi\,dx = \int_{\R^N}f \xi \,d\gamma_\mu,
$$
as claimed.
Next, we let $g \in L^\infty(\Omega, |x|^{\rho}dx)$, and we prove (\ref{g-estimate}) for $\mu>\mu_0$. By what we have proved so far, there exists a function $v \in \cH^\s_0(\Omega)$ satisfying
$$
\cE^\s_\mu(v,\xi)= \int_{\Omega}g \xi\,dx \qquad \text{for all $\xi \in \cH^\s_0(\Omega)$}
$$
and $\frac{|v|}{\Gamma_\mu} \le c \|g\|_{L^\infty(\Omega,|x|^{\varrho} dx)}$ in \ $\Omega$. Then, choosing $\xi=u$, we obtain that
$$
\int_{\Omega}g u \,dx = \cE^\s_\mu(v,u)= \int_{\Omega}fv \,dx \le
c \|g\|_{L^\infty(\Omega,|x|^{\varrho} dx)} \int_{\Omega}|f| \Gamma_\mu \,dx
= c \|g\|_{L^\infty(\Omega,|x|^{\varrho} dx)} \int_{\Omega} |f|d\gamma_\mu(x),
$$
as claimed in (\ref{g-estimate}).

To treat the case $\mu=\mu_0$, we consider a decreasing sequence of numbers $\mu_n$, $n \in \N$ with $\mu_n >\mu_0$ for all $n$ and $\mu_n \to \mu_0$ as $n \to \infty$. Since $\varrho< 2\s-\tau_+(\s,\mu_0)$ by assumption, we may assume, passing to a subsequence, that
\begin{equation}
  \label{eq:uniform-condition-rho}
\varrho < 2\s-\tau_+(\s,\mu_1).
\end{equation}
Using what we have proved so far for $\mu=\mu_n$, we have
\begin{equation}
  \label{eq:u-n-estimate+1}
|u_n| \le c \|f\|_{L^\infty(\Omega,|x|^\varrho dx)} \Gamma_{\mu_n} \qquad \text{in $\Omega\quad$ for every $n$ with $c = c(\s,\varrho,\Omega,\mu_1)$}.
\end{equation}
Adjusting the value of $c = c(\s,\varrho,\Omega,\mu_1)$, we can thus infer that
\begin{equation}
  \label{eq:u-n-estimate-mu-0}
|u_n| \le c \|f\|_{L^\infty(\Omega,|x|^\varrho dx)} \Gamma_{\mu_0} \qquad \text{in $\Omega\quad$ for every $n$ with $c = c(\s,\varrho,\Omega,\mu_1)$}
\end{equation}
Since $\Gamma_{\mu_0}(x)= |x|^{\frac{2\s-N}{2}}$, we have $\Gamma_{\mu_0} \in L^{s}(\Omega)$ for every $1<s < \frac{2N}{N-2\s}$, 
and hence the sequence $\{u_n\}_n$ is bounded in $L^{s}(\Omega)$ for every $1<s < \frac{2N}{N-2\s}$. Thus, there exists
\begin{equation}
  \label{eq:weak-convergence-u-L-s}
u \in \bigcap_{1<s < \frac{2N}{N-2\s}}L^s(\Omega) \quad \text{with}\quad u_n \weak u\qquad \text{in $L^{s}(\Omega)$ for $s < \frac{2N}{N-2\s}$.}
\end{equation}
We claim that $u \in W^\s_*(\Omega)$, and that $u$ solves (\ref{eq 2.1f}). To show that $u \in W^\s_*(\Omega)$, it suffices to show that
$$
\cE^\s(\psi u,\psi u)<\infty \qquad \text{for every $\psi \in C^\infty_c (\R^N \setminus \{0\})$.}
$$
To this end, we note that
$$
\cE^\s_{\mu_n} (u_n,\psi^2 u_n) = \int_{\Omega}f \psi^2 u_n\,dx\le \tilde c \int_{\Omega} f \Gamma_{\mu_0}\,dx = \tilde c \int_{\Omega} f |x|^{\frac{2\s-N}{2}}\,dx \le \tilde c \int_{\Omega} |x|^{
\frac{2\s-N}{2}-\varrho}\,dx \le \tilde c.
$$
Here and in the following, the symbol $\tilde c$ stands for a constant, depending on $\psi$ but not on $n$,  which may change its value in every step. Moreover, we have that
\begin{align*}
\cE^\s(\psi u_n, \psi u_n) &- \cE^\s_{\mu_n} (u_n,\psi^2 u_n)\\
&= \int_{\R^N \times \R^N}u_n(x)u_n(y)(\psi(x)-\psi(y)^2j(x-y)\,dxdy  - \mu_n \int_{\Omega}\frac{\psi^2u_n^2}{|x|^{2\s}}\,dx\\
&\le 2  \int_{\Omega} u_n(x) \int_{K} u_n(y) (\psi(x)-\psi(y)^2j(x-y)\,dydx - \tilde c \mu_n \|u_n\|_{L^2(\Omega)}^2 \\
&\le \tilde c \int_{\Omega}u_n(x)\,dx + \tilde c  \le \tilde c.
\end{align*}
Combining these inequalities, we find that
$$
\cE^\s(\psi u_n, \psi u_n) \le \tilde c \qquad \text{for all $n$.}
$$
Consequently, the sequence $\{\psi u_n\}_n$ is bounded in $\cH^\s_0(\Omega)$, which implies that, after passing to a subsequence,
\begin{equation}
  \label{eq:label-weak-local-convergence}
\psi u_n  \weak \psi u  \qquad \text{in $\cH^\s_0(\Omega)$.}
\end{equation}
Since also $\psi u_n \weak \psi u$ in $L^2(\Omega)$, it follows that
$\psi u_n \weak \psi u$ in $\cH^\s_0(\Omega)$, and therefore
$$
\cE^\s(\psi u,\psi u)<\infty.
$$
This implies that $u \in W^\s_*(\Omega)$. Next, we show that
\begin{equation}
  \label{eq:very-weak-sol-prop}
\cE^\s_{\mu_0} (u,v)= \int_{\Omega} f v \,dx\quad \text{for every $v \in \cH^\s_0(\Omega)$ with $v \equiv 0$ in a neighborhood of $0$.}
\end{equation}
Fixed $v \in \cH^\s_0(\Omega)$, and let $\eps>0$ be such that $v \equiv 0$ in $B_{2\eps} $. Moreover, let $\psi \in C^\infty_c (\R^N \setminus \{0\})$ satisfy $0 \le \psi \le 1$ and $\psi \equiv 1$ in $\Omega \setminus B_\eps $. Since $\psi u_n \weak \psi u$ in $\cH^\s_0(\Omega)$, we have that
\begin{equation}
  \label{eq:very-weak-sol-comb-1}
\cE^\s(\psi u_n,v) \to \cE^\s(\psi u,v) \quad \text{as\ \, $n \to \infty$.}
\end{equation}
Moreover, we have
$$
\cE^\s(\psi u,v)-\cE^\s(u,v)= \cE^\s(\tilde \psi u,v)= -\int_{\R^N \times \R^N}\tilde\psi(x)u(x)v(y)j(x-y)\,dydx
$$
with $\tilde\psi= 1-\psi$ and
$$
\cE^\s(\psi u_n,v)-\cE^\s(u_n,v)= \cE^\s(\tilde\psi u_n,v)= -\int_{\R^N \times \R^N}\tilde\psi(x)u_n(x)v(y)j(x-y)\,dydx,
$$
since $v \equiv 0$ in $B_{2\eps} $ and $\psi \equiv 0$ in $\R^N \setminus B_\eps $. Consequently,
$$
\cE^\s(\psi u_n,v)-\cE^\s(u_n,v)-\bigl[\cE^\s(\psi u,v)-\cE^\s(u,v)\bigr]= \int_{\R^N} (u_n(x)-u(x))h(x) dx
$$
with
$$
h \in L^\infty(\R^N), \qquad h(x):= \tilde\psi(x)\int_{\R^N \setminus B_{2\eps}(0)}v(y)j(x-y)\,dy.
$$
Since $u_n \weak u$ in $L^1(\Omega)$, we thus conclude that
\begin{equation}
  \label{eq:very-weak-sol-comb-2}
\cE^\s(\psi u_n,v)-\cE^\s(u_n,v)-\bigl[\cE^\s(\psi u,v)-\cE^\s(u,v)\bigr] \to 0 \qquad \text{as $n \to \infty$.}
\end{equation}
Combining (\ref{eq:very-weak-sol-comb-1}) and (\ref{eq:very-weak-sol-comb-2}), we find that
$$
\cE^\s(u,v)= \lim_{n \to \infty}  \cE^\s(u_n,v) = - \lim_{n \to \infty}
\mu_n \int_{\Omega}\frac{u_n v}{|x|^{2\s}}\,dx + \int_{\Omega} f v\,dx
= \mu_0 \int_{\Omega}\frac{u v}{|x|^{2\s}}\,dx + \int_{\Omega} fv\,dx,
$$
as claimed in (\ref{eq:very-weak-sol-prop}). Hence $u$ solves (\ref{eq 2.1f}) with $\mu=\mu_0$.

Next we note that, along a subsequence, we may pass from weak to strong convergence in (\ref{eq:weak-convergence-u-L-s}), i.e., we have
\begin{equation}
  \label{eq:label-strong-convergence}
u_n \to u\quad \text{in $L^{s}(\Omega)$\ for $s < \frac{2N}{N-2\s}$.}
\end{equation}
Indeed, for every $\psi \in C_c^\infty(\R^N \setminus \{0\})$, after passing to a subsequence, we have that
$$
\psi u_n \to \psi u\quad \text{in $L^{s}(\Omega)$\ for $s < \frac{2N}{N-2\s}$}
$$
by (\ref{eq:label-weak-local-convergence}) and the compact Sobolev embeddings $\cH^\s_0(\Omega) \hookrightarrow L^{s}(\Omega)$ for
$s < \frac{2N}{N-2\s}$. Then the set $A_\eps:= \{u_n 1_{\Omega\setminus B_{\eps}}\::\:\: n \in \N\}$
are precompact in $L^{s}(\Omega)$ for $s < \frac{2N}{N-2\s}$ and $\eps>0$. Moreover, recalling (\ref{eq:u-n-estimate-mu-0}) and the fact
that $\Gamma_0 \in L^{s}(\Omega)$ for $s < \frac{2N}{N-2\s}$, we then conclude that the set $\{u_n\::\:n \in \N\}$ is 
precompact in $L^{s}(\Omega)$ for $s < \frac{2N}{N-2\s}$. Thus we may pass to a subsequence such that (\ref{eq:label-strong-convergence}) holds.

From \eqref{eq:u-n-estimate-mu-0} and (\ref{eq:label-strong-convergence}), it  follows that
\begin{equation}
  \label{eq:u-n-estimate}
|u| \le c \|f\|_{L^\infty(\Omega),|x|^\rho dx)} \Gamma_{\mu_0} \quad \text{on $\Omega$ with $c = c(\s,\rho,\Omega,\mu_1)$.}
\end{equation}
Hence (\ref{eq:Gamma-mu-comp}) is  true for $\mu=\mu_0$.\\

Next we prove the distributional identity (\ref{eq:distributional-0}) in the case $\mu= \mu_0$.
Since $\tau_+(\s,\mu_0)= \frac{2\s-N}{2}<2\s-1$, we may assume  that $\tau_+(\s,\mu_n) < 2\s-1$ for all $n \in \N$. We then note that
\begin{equation}
  \label{eq:bounded-L-t}
|\Lambda_{\mu_n} (x)| = |x|^{1-2\s+\tau_+(\s,\mu_n)} \le C |x|^{1-2\s+ \tau_+(\s,\mu_0)}= C |x|^{\frac{2-2\s-N}{2}} \qquad \text{for $x \in \Omega$, $n \in \N$}
\end{equation}
with a constant $C>0$. Fixed $t \in \bigl(\frac{2N}{N+2\s-2}, \frac{2N}{N+2\s}\bigr)$ and
$\xi \in C^2_c(\Omega)$, we claim that
\begin{equation}
  \label{eq:weak-modified-fractional}
(-\Delta)^\s_{\Gamma_{\mu_n}} \xi \weak  (-\Delta)^\s_{\Gamma_{\mu_0}} \xi \quad \text{in $L^t(\Omega)$ as $n \to \infty$.}
\end{equation}
Indeed, from Lemma~\ref{lambda-estimate} and (\ref{eq:bounded-L-t}), we deduce that
the sequence $\{(-\Delta)^\s_{\mu_n} \xi\}_n$ is bounded in $L^t(\Omega)$. Moreover, for $\psi \in C^2_c(\Omega \setminus \{0\})$, we have  that
\begin{align*}
\int_{\Omega}\psi(x) [(-\Delta)^\s_{\Gamma_{\mu_n}} \xi](x)\,dx &= \int_{\Omega}\psi(x) [(-\Delta)^\s (\xi \Gamma_{\mu_n})](x)\,dx
+ \int_{\Omega}\psi(x)\xi(x) [(-\Delta)^\s \Gamma_{\mu_n}](x)\,dx\\
\to &\int_{\Omega}\psi(x) [(-\Delta)^\s (\xi \Gamma_{\mu_0})](x)\,dx
+ \int_{\Omega}\psi(x)\xi(x) [(-\Delta)^\s \Gamma_{\mu_0}](x)\,dx
\end{align*}
as $n \to \infty$, since $\Gamma_{\mu_n} \to \Gamma_{\mu_0}$ and $\xi \Gamma_{\mu_n} \to \xi \Gamma_{\mu_n}$ in $C^2_{loc}(\Omega \setminus \{0\})$ and in $L^1(\R^N,\frac{dx}{1+|x|^{N+2\s}})$. Since $\psi \in C^2_c(\Omega \setminus \{0\})$ is dense in $L^t(\Omega)$, we thus conclude that (\ref{eq:weak-modified-fractional}) holds.\smallskip

For $\mu=\mu_n$, we now write the identity (\ref{eq:distributional-0}) as
 \begin{equation}\label{eq:distributional-2}
\int_{\Omega}u_n   (-\Delta)^\s_{\Gamma_{\mu_0}} \xi \,dx+\int_{\Omega}u_n  [(-\Delta)^\s_{\Gamma_{\mu_n}} \xi- (-\Delta)^\s_{\Gamma_{\mu_0}}\xi] \,dx = \int_{\Omega}f \xi\,d\mu_n.
\end{equation}
Since $t'= \frac{t}{t-1} < \frac{2N}{N-2\s}$ and therefore,
\begin{equation}
  \label{eq:t-prime-convergence}
u_n \to u\quad \text{in $L^{t'}(\Omega)$}\ \ {\rm as} \ \, n \to \infty,
\end{equation}
 we now deduce from (\ref{eq:weak-modified-fractional}) that
$$\int_{\Omega}u_n   (-\Delta)^\s_{\Gamma_{\mu_0}} \xi \,dx\to \int_{\Omega}u   (-\Delta)^\s_{\Gamma_{\mu_0}} \xi \ \ {\rm as} \ \, n \to \infty$$
and
$$\int_{\Omega}u_n  \left((-\Delta)^\s_{\Gamma_{\mu_n}}- (-\Delta)^\s_{\Gamma_{\mu_0}}\right) \xi \,dx \to 0\ \ {\rm as} \ \, n \to \infty.
$$
 Moreover,
$$
\int_{\Omega}f \xi\, d\mu_n = \int_{\Omega}f \xi \Gamma_{\mu_n}\,dx \to \int_{\Omega}f \xi \Gamma_{\mu_0}\,dx = \int_{\Omega}f \xi\, d\gamma_{\mu_0} \quad \text{as $n \to \infty$.}
$$
We thus conclude from (\ref{eq:distributional-2}) that
$$
\int_{\Omega}u   (-\Delta)^\s_{\Gamma_{\mu_0}} \xi \,dx = \int_{\Omega}f \xi\, d\gamma_{\mu_0},
$$
as desired.\smallskip

Next, we let $g \in L^\infty(\Omega, |x|^{\rho}dx)$, and we prove (\ref{g-estimate}) for $\mu=\mu_0$. For this we use (\ref{g-estimate}) for $\mu=\mu_n$, obtaining that
\begin{equation}
  \label{eq:lambda-more-detailed-est-1}
\int_{\Omega}g u_n \,dx  \le c \|g\|_{L^\infty(\Omega,|x|^{\rho}dx)} 
\int_{\Omega} |f|\,d\gamma_{\mu_n}.
\end{equation}
with $c= c(\s,\varrho,\Omega,\mu_1)$, where, by Lebesgue's Theorem,  
$$
\int_{\Omega} |f(x)|\,d\gamma_{\mu_n}(x)= \int_{\Omega}|f(x)||x|^{\tau_+(\s,\mu_n)}\,dx \to \int_{\Omega}|f(x)||x|^{\tau_+(\s,\mu)}\,dx = \int_{\Omega} |f(x)|\,d\gamma_{\mu_0}(x)\quad \text{as $n \to \infty$.}
$$
Moreover, we note that $L^\infty(\Omega, |x|^{\rho}dx) \subset L^t(\Omega)$ for $1 \le t< \frac{N}{\rho}$. Since $\rho <2\s-\tau_+(\s,\mu_0)= \frac{N+2\s}{2}$, there exists $t \in \bigl(\frac{2N}{N+2\s-2}, \frac{2N}{N+2\s}\bigr)$ with $g \in L^\infty(\Omega, |x|^{\rho}dx) \subset L^t(\Omega)$. Thus (\ref{eq:t-prime-convergence}) implies that 
$$
\int_{\Omega}gu_n  \,dx \to \int_{\Omega}g u \,dx \quad \text{as $n \to \infty$.}
$$
Consequently, we may pass to the limit in (\ref{eq:lambda-more-detailed-est-1}) and obtain (\ref{g-estimate}) for $\mu=\mu_0$.

Finally, we prove (\ref{l1}) for $\mu\ge \mu_0$. By Lemma~\ref{lambda-estimate}, we have
$$
\Lambda _\mu \in L^\infty(\Omega,|x|^{\varrho(\s,\mu)}dx)\qquad \text{with $\varrho(\s,\mu):= \max \bigl\{ 2\s-\tau_+-1,
\frac{2\s-\tau_+(\s,\mu)}{2} \bigr\}< 2\s-\tau_+(\s,\mu).$}
$$
Hence we may choose $g= \Lambda_\mu$ in (\ref{g-estimate}), which yields (\ref{l1}) with $d:=c \|\Lambda _\mu\|_{L^\infty(\Omega,|x|^{\varrho(\s,\mu)}dx)}$.
\end{proof}

We also need the following uniform estimate away from the origin.  

\begin{proposition}
\label{sec:dirichl-probl-bound-comp-corol-corol}
Let $\mu\ge\mu_0$, let $f \in L^\infty(\Omega, |x|^{\rho}dx) $ for some $\rho < 2\s - \tau_+(\s,\mu)$, and let $u \in W^\s_{*}(\Omega)$ be the unique solution of \eqref{eq 2.1f}. Then for any numbers $0<r_1<r_2<\infty$, there exists a constant $c=c(\s,\mu,\rho,\Omega,r_1,r_2)>0$ such that
\begin{equation*}
|u(x)| \le c \Bigl(\|f\|_{L^\infty(\Omega \setminus B_{r_1})}+ \|f\|_{L^1(\Omega,d\gamma_\mu)}\Bigr) \qquad  \text{for all $x \in \Omega \setminus B_{r_2}$.}
\end{equation*}
\end{proposition}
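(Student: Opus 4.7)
The plan is to combine the weighted $L^1$-estimate \eqref{l1} from part (v) of Theorem~\ref{sec:dirichl-probl-bound-comp-corol} with the local $L^\infty$-bound for linear inhomogeneous fractional problems proved in Appendix A. The key observation is that on any region staying at positive distance from the origin, the Hardy potential $\mu|x|^{-2\s}$ is just a bounded coefficient, so $\cL^\s_\mu u = f$ behaves there like a regular inhomogeneous fractional equation, to which local $L^\infty$-theory applies.

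First I would translate the weighted $L^1$-bound \eqref{l1} into a plain $L^1$-bound on $\R^N$. Inspecting \eqref{def-Lambda}, the weight $\Lambda_\mu$ is bounded below on $\overline\Omega$ by a positive constant $c_0 = c_0(\s,\mu,\Omega)$: either $\Lambda_\mu \equiv 1$, or $\Lambda_\mu$ diverges at the origin like a negative power of $|x|$, or logarithmically; in every case $\Lambda_\mu \ge 1$ near $0$ and is continuous and strictly positive elsewhere on $\overline\Omega$. Combining with \eqref{l1} then gives $\|u\|_{L^1(\Omega)} \le c_1\|f\|_{L^1(\Omega,d\gamma_\mu)}$, and because $u \equiv 0$ outside the bounded set $\Omega$, this upgrades to
$$\|u\|_{L^1(\R^N,(1+|x|^{N+2\s})^{-1}dx)} \le c_2 \|f\|_{L^1(\Omega,d\gamma_\mu)}.$$

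Next I would set $R := (r_2-r_1)/2$. For any $x_0 \in \Omega \setminus B_{r_2}$, the triangle inequality yields $B_R(x_0) \cap B_{r_1} = \emptyset$, so $|\mu||y|^{-2\s} \le |\mu|r_1^{-2\s}$ for every $y \in B_R(x_0)$, and $\|f\|_{L^\infty(B_R(x_0)\cap\Omega)} \le \|f\|_{L^\infty(\Omega\setminus B_{r_1})}$. I would then invoke the local $L^\infty$-bound from Appendix A, applied on $B_R(x_0)$ to the equation $\cL^\s_\mu u = f$, to obtain
$$|u(x_0)| \le \|u\|_{L^\infty(B_{R/2}(x_0))} \le C\bigl(\|u\|_{L^1(\R^N,(1+|x|^{N+2\s})^{-1}dx)} + \|f\|_{L^\infty(B_R(x_0))}\bigr),$$
with $C = C(\s,\mu,r_1,r_2)$ independent of $x_0$. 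Combined with the previous step, this yields the claimed estimate.

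The main obstacle will be the uniformity of the local $L^\infty$-constant for $x_0$ close to $\partial\Omega$, where the ball $B_R(x_0)$ crosses into $\Omega^c$. This is not actually a serious issue once one notices that $u \equiv 0$ on $\Omega^c$: the far-field tail entering the nonlocal part of the equation on $B_R(x_0)$ is already encoded in the weighted $L^1$-norm produced in the first step, and Appendix A's estimate carries through with constants depending only on $R$, $r_1$, $\s$ and $\mu$, uniformly in the location of $x_0 \in \Omega \setminus B_{r_2}$.
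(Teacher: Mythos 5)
Your first two steps are sound: the weight $\Lambda_\mu$ is indeed bounded below by a positive constant on the bounded set $\overline\Omega$, so \eqref{l1} does yield $\|u\|_{L^1(\Omega)}\le C\|f\|_{L^1(\Omega,d\gamma_\mu)}$, and since $u\equiv0$ on $\Omega^c$ this gives the weighted $L^1(\R^N)$-bound you want. The use of Proposition~\ref{appendix-main-proposition} is also fine for interior points, i.e.\ for $x_0\in\Omega\setminus B_{r_2}$ with $\mathrm{dist}(x_0,\partial\Omega)>R$, and there you do obtain the claimed estimate.

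The gap is at the boundary. Proposition~\ref{appendix-main-proposition} requires the equation $(-\Delta)^\s u+Vu=f$ to hold with $V,f\in L^\infty$ throughout the ball $B_R(x_0)$, but if $B_R(x_0)$ crosses $\partial\Omega$ the equation simply does not hold on $B_R(x_0)\cap\Omega^c$: there the only information is $u\equiv 0$, and $(-\Delta)^\s u=-C_{N,\s}\int_\Omega u(y)|x-y|^{-N-2\s}\,dy$ is in general \emph{not} bounded as $x\to\partial\Omega$ (typically it blows up like $\mathrm{dist}(x,\partial\Omega)^{-\s}$ when $u$ has the usual $\mathrm{dist}^\s$ boundary behaviour). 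Your closing remark about the ``far-field tail'' being encoded in the $L^1$-norm confuses the tail integral, which is indeed controlled, with the requirement that the equation hold in the whole ball; these are different things. One could shrink the ball to $B_{r'}(x_0)$ with $r'\sim\mathrm{dist}(x_0,\partial\Omega)$, but then the constant $C(N,r',\rho)$ from Proposition~\ref{appendix-main-proposition} degenerates as $x_0\to\partial\Omega$, so this cannot give a uniform bound on all of $\Omega\setminus B_{r_2}$.

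The paper avoids this by not relying on a purely local estimate near $\partial\Omega$. It instead cuts off $u$ away from the origin ($\tilde u=\eta u$), absorbs the resulting commutator $g(x)=\mathrm{p.v.}\int(\eta(x)-\eta(y))|x-y|^{-N-2\s}u(y)\,dy$ into the right-hand side, checks $\tilde f=\eta f+C_{N,\s}g\in L^\infty(\Omega)$, and then applies the \emph{global} comparison estimate (\ref{eq:Gamma-mu-comp}) of Theorem~\ref{sec:dirichl-probl-bound-comp-corol}(ii), which is a barrier argument with $\Gamma_\mu$ and therefore gives a pointwise bound up to $\partial\Omega$. Appendix A's $L^\infty$ estimate and the interior Schauder estimate of \cite{RS} enter only in the interior transition region to show $g$ is bounded there. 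To repair your argument, you would likewise need either a quantitative boundary regularity estimate (a $C^\s(\overline\Omega)$ bound for $u$ in terms of the data) or a global supersolution comparison as the paper uses.
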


\begin{proof}
We fix numbers $s_i,\sigma_i, t_i, \theta_i$, $i=1,2$ with $r_1<s_1<\sigma_1<t_1<\theta_1<\theta_2<t_2<\sigma_2<s_2<r_2$. 
We also choose a function $\eta \in C^\infty(\R^N)$ with $\eta \equiv 1$ on $\R^N \setminus B_{\theta_2}$ and $\eta \equiv 0$ on $B_{\theta_1}$. 
We then consider the function 
$$
g: \Omega \to \R, \qquad g(x)= p.v.\int_{\R^N} \frac{\eta(x)-\eta(y)}{|x-y|^{N+2\s}}u(y)\,dy. 
$$
We claim that $g \in L^\infty(\Omega)$ with 
\begin{equation}
  \label{eq:g-bound}
\|g\|_{L^\infty(\Omega)} \le C \Bigl(\|f\|_{L^\infty(\Omega \setminus B_{r_1}}+ \|f\|_{L^1(\Omega,d\gamma_\mu)}\Bigr). 
\end{equation}
Here and in the following, the letter $C$ denotes positive constants depending only on $\s,\mu,\rho,\Omega,r_1$ and $r_2$. We suppose for the moment that this is true. Then we see that the function $\tilde u:= u \eta \in W^{\s}(\Omega)$ solves \eqref{eq 2.1f}
with $f$ replaced by the function 
$$
\tilde f:= f \eta + C_{N,\s} g \in L^\infty(\Omega).
$$
Consequently, Theorem~\ref{sec:dirichl-probl-bound-comp-corol} implies that 
$$
\frac{|\tilde u(x)|}{\Gamma_\mu(x)} \le C \|\tilde f\|_{L^\infty(\Omega, |x|^{\rho}dx)} \le C \|\tilde f\|_{L^\infty(\Omega,dx)}
\quad\ \text{for $x \in \R^N \setminus \{0\}$}
$$
and therefore 
$$
|u(x)|= |\tilde u(x)| \le C\, \Gamma_\mu(x)\|\tilde f\|_{L^\infty(\Omega,dx)} \le C \bigl(\|f\|_{L^\infty(\Omega \setminus B_{r_1}(0)}+ \|f\|_{L^1(\Omega,d\gamma_\mu)}\Bigr)\quad \text{for $x \in \R^N \setminus B_{r_2}$,}
$$
as claimed. Hence it remains to show (\ref{eq:g-bound}). For $x \in \R^N \setminus B_{t_2}$, we have  
\begin{align*}
g(x)&= \int_{\R^N} \frac{\eta(x)-\eta(y)}{|x-y|^{N+2\s}}u(y)\,dy = \int_{B_{\theta_2}}\frac{1-\eta(y)}{|x-y|^{N+2\s}}u(y)\,dy\\ 
&\le \int_{B_{\theta_2}}\frac{|u(y)|}{|x-y|^{N+2\s}}\,dy \le (t_2-\theta_2)^{-N-2\s}\|u\|_{L^1(B_{t_2})} \le C \|u\|_{L^1(\Omega,\Lambda_\mu dx)}
 \le C \|f\|_{L^1(\Omega,d\gamma_\mu)},   
\end{align*}
where in the last step we used Theorem~\ref{sec:dirichl-probl-bound-comp-corol} again. Similarly, for $x \in B_{t_1}$ we see that 
\begin{align*}
g(x) &\le \int_{\R^N \setminus B_{\theta_1}}\frac{|u(y)|}{|x-y|^{N+2\s}}\,dy \le C \int_{\Omega \setminus B_{\theta_1}}\frac{|u(y)|}{1+|y|^{N+2\s}}\,dy\\
&\le C \int_{\Omega \setminus B_{\theta_1}}\frac{|u(y)|}{1+|y|^{N+2\s}}\,dy \le C \|u\|_{L^1(\Omega,\Lambda_\mu dx)} \le C \|f\|_{L^1(\Omega,d\gamma_\mu)}.\end{align*}  
For estimate $g(x)$ for $x \in B_{t_2} \setminus B_{t_1}$, we write 
$$
g(x)= g_1(x)+ g(x)\qquad \text{with} \quad\ g_1(x)=\frac{u(x)}{C_{N,\s}}[(-\Delta)^\s \eta](x),\quad g_2(x)= \int_{\R^N} \frac{(\eta(x)-\eta(y))(u(y)-u(x))}{|x-y|^{N+2\s}}u(x)\,dy.
$$
Now it follows from Proposition~\ref{appendix-main-proposition} and Theorem~\ref{sec:dirichl-probl-bound-comp-corol} that $u \in L^\infty(B_{s_2} \setminus B_{s_1})$ with 
\begin{equation}
  \label{eq:l-infty-local-proof}
\|u\|_{L^\infty(B_{s_2} \setminus B_{s_1})} \le C \Bigl(\|f\|_{L^\infty(\Omega \setminus B_{r_1})}+ \|u\|_{L^1(\Omega)}\Bigr) \le  C \Bigl(\|f\|_{L^\infty(\Omega \setminus B_{r_1})}+\|f\|_{L^1(\Omega,d\gamma_\mu)}\Bigr).
\end{equation}
Consequently, 
$$
\|g_1\|_{L^\infty(B_{t_2} \setminus B_{t_1})} \le \|g_1\|_{L^\infty(B_{s_2} \setminus B_{s_1})} \le C \Bigl(\|f\|_{L^\infty(\Omega \setminus B_{r_1}}+\|f\|_{L^1(\Omega,d\gamma_\mu)}\Bigr).
$$
Since 
$$
(-\Delta)^\s u =  -\frac{\mu}{|\cdot|^2}u + f \qquad \text{in $B_{s_2} \setminus B_{s_1}$,}
$$
it follows from (\ref{eq:l-infty-local-proof}) and regularity estimates in \cite{RS} that $u \in C^{\s}(\overline{B_{\sigma_2} \setminus B_{\sigma_1}})$ with 
\begin{align*}
\|u\|_{C^{\s}(\overline{B_{\sigma_2} \setminus B_{\sigma_1}})} &\le C \Bigl( \|u\|_{L^\infty(B_{s_2} \setminus B_{s_1})}+ \|f\|_{L^\infty(\Omega \setminus B_{r_1})}+ \|u\|_{L^1(\Omega)}\Bigr)\\
&\le C \Bigl(\|f\|_{L^\infty(\Omega \setminus B_{r_1})}+\|f\|_{L^1(\Omega,d\gamma_\mu)}\Bigr).
\end{align*}
Hence for $x \in B_{t_2} \setminus B_{t_1}$, we can write 
\begin{align*}
g_2(x)&= \int_{\overline{B_{\sigma_2} \setminus B_{\sigma_1}}} \frac{(\eta(x)-\eta(y))(u(y)-u(x))}{|x-y|^{N+2\s}}u(x)\,dy +
\int_{\R^N \setminus (\overline{B_{\sigma_2} \setminus B_{\sigma_1}})} \frac{(\eta(x)-\eta(y))(u(y)-u(x))}{|x-y|^{N+2\s}}u(x)\,dy\\ 
&\le \|u\|_{C^{\s}(\overline{B_{\sigma_2} \setminus B_{\sigma_1}})} \|\eta\|_{C^{1}(\overline{B_{\sigma_2} \setminus B_{\sigma_1}})}
\int_{\overline{B_{\sigma_2} \setminus B_{\sigma_1}}}|x-y|^{1-N-\s}\,dy\\&\quad\ + 4\|\eta\|_{L^\infty(\R^N)} \|u\|_{L^1(\Omega)} \max \bigl\{(\sigma_2-t_2)^{-N-2\s}, (t_1-\sigma_1)^{-N-2\s} \bigr\} \\
&\le C \Bigl(\|f\|_{L^\infty(\Omega \setminus B_{r_1})}+\|f\|_{L^1(\Omega,d\gamma_\mu)}\Bigr).
\end{align*}
Combining these estimates, we deduce~(\ref{eq:g-bound}), as required.
\end{proof}

\subsection{Fundamental solution in a bounded domain}

The goal in this subsection is to obtain the unique solution $ \Phi_\mu^\sOmega \in W^\s_*(\Omega)$ of
the problem
\begin{equation}\label{eq 1.1b}
\mathcal{L}_\mu^\s  u= 0\quad
   {\rm in}\ \,  {\Omega}\setminus \{0\},\qquad \  
 u= 0\quad  {\rm   in}\ \,  \Omega^c,\qquad \ 
 \lim_{x\to0}\frac{u(x)}{\Phi_\mu(x)}=1,
\end{equation}
and to derive estimates for it. By definition, $\Phi_\mu^\sOmega$ has an isolated singularity at zero. We have the following result:

\begin{theorem}\label{teo 2}
There exists a unique solution $\Phi_\mu^\sOmega \in W^\s_*(\Omega)$ of the problem (\ref{eq 1.1b}). Moreover, $\Phi_\mu^\sOmega$ is nonnegative in $\Omega$ and satisfies the distributional identity
\begin{equation}\label{1.2b}
 \int_{\Omega}\Phi_\mu^\sOmega  (-\Delta)^\s_{\Gamma_\mu} \xi\, dx =c_{\s,\mu} \xi(0),\qquad \text{for all $\xi\in  C^{1.1}_0(\Omega)$.}
\end{equation}
\end{theorem}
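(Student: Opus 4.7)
The plan is to construct $\Phi_\mu^\sOmega$ via the splitting $\Phi_\mu^\sOmega = V + w$, where $V$ absorbs the singularity of $\Phi_\mu$ at the origin and $w \in W^\s_*(\Omega)$ is a bounded correction supplied by Theorem~\ref{sec:dirichl-probl-bound-comp-corol}. Fix a cutoff $\eta \in C^\infty_c(\Omega)$ with $0 \le \eta \le 1$ and $\eta \equiv 1$ on $B_{r_0}$ for some $r_0 > 0$ with $\overline{B_{2r_0}} \subset \Omega$, and set $V := \eta \Phi_\mu$. By Remark~\ref{gamma-mu-W-alpha-remark}(ii), $V \in W^\s_*(\Omega)$. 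The crucial step is to verify that $g := \cL^\s_\mu V$ belongs to $L^\infty(\Omega)$. Using the commutator identity $(-\Delta)^\s(\eta \Phi_\mu) = \eta(-\Delta)^\s\Phi_\mu + \Phi_\mu(-\Delta)^\s\eta - I(\Phi_\mu,\eta)$, with $I(u,v)(x) := C_{N,\s}\int (u(x)-u(y))(v(x)-v(y))|x-y|^{-N-2\s}\,dy$, together with $\cL^\s_\mu \Phi_\mu = 0$ on $\R^N \setminus \{0\}$, one gets $g(x) = \Phi_\mu(x)(-\Delta)^\s\eta(x) - I(\Phi_\mu,\eta)(x)$ in $\R^N \setminus \{0\}$. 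For $x \in B_{r_0/2}$, where $\eta(x)=1$, the two terms combine into
\[
g(x) = C_{N,\s}\int_{\R^N}\Phi_\mu(y)\,(1-\eta(y))|x-y|^{-N-2\s}\,dy,
\]
which is uniformly bounded in $x$ since $\Phi_\mu \in L^1(\R^N, dx/(1+|x|^{N+2\s}))$; for $|x| \ge r_0/2$ the function $V$ is smooth away from the origin and $g$ is bounded by direct inspection. Hence $g \in L^\infty(\Omega)$.

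Since $L^\infty(\Omega) \subset L^\infty(\Omega, |x|^\rho dx)$ for every $\rho > 0$ and $2\s - \tau_+(\s,\mu) > 0$, Theorem~\ref{sec:dirichl-probl-bound-comp-corol} furnishes a unique $w \in W^\s_*(\Omega)$ solving $\cL^\s_\mu w = -g$ in $\Omega \setminus \{0\}$, $w \equiv 0$ on $\Omega^c$, and $w(x)/\Phi_\mu(x) \to 0$ as $x \to 0$. The function $\Phi_\mu^\sOmega := V + w$ then satisfies $\cL^\s_\mu \Phi_\mu^\sOmega = g - g = 0$ in $\Omega \setminus \{0\}$, $\Phi_\mu^\sOmega = 0$ in $\Omega^c$, and $\Phi_\mu^\sOmega/\Phi_\mu \to 1$ at the origin because $V = \Phi_\mu$ on $B_{r_0}$. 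Uniqueness follows from applying Lemma~\ref{sec:dirichl-probl-bound-comp} to the difference of two candidate solutions. For nonnegativity, note that $\Phi_\mu > 0$ in a punctured neighborhood of the origin (since $|x|^{\tau_-(\s,\mu)}>0$ and $-|x|^{(2\s-N)/2}\ln|x|>0$ for small $|x|$), whence $\liminf_{x \to 0}\Phi_\mu^\sOmega(x)/\Phi_\mu(x) = 1 \ge 0$; together with $\cL^\s_\mu \Phi_\mu^\sOmega = 0 \ge 0$ in $\Omega \setminus \{0\}$ and $\Phi_\mu^\sOmega = 0 \ge 0$ on $\Omega^c$, Lemma~\ref{sec:dirichl-probl-bound-comp} yields $\Phi_\mu^\sOmega \ge 0$.

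For the distributional identity, the plan is to combine Theorem~\ref{Theorem B}, the adjoint relation~(\ref{eq:adjoint-op}) of Lemma~\ref{relationship-operators-0}, and Theorem~\ref{sec:dirichl-probl-bound-comp-corol}(v). For $\xi \in C^{1,1}_0(\Omega)$ (extended by $0$ outside $\Omega$), write
\begin{align*}
\int_\Omega \Phi_\mu^\sOmega (-\Delta)^\s_{\Gamma_\mu}\xi\, dx
&= \int_{\R^N}\Phi_\mu (-\Delta)^\s_{\Gamma_\mu}\xi\, dx - \int_{\R^N}(1-\eta)\Phi_\mu (-\Delta)^\s_{\Gamma_\mu}\xi\, dx \\
&\quad + \int_\Omega w (-\Delta)^\s_{\Gamma_\mu}\xi\, dx.
\end{align*}
Theorem~\ref{Theorem B} evaluates the first integral as $c_{\s,\mu}\xi(0)$. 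The function $u := (1-\eta)\Phi_\mu$ is smooth on $\R^N$ (as $(1-\eta)\equiv 0$ near the origin), lies in $L^1(\R^N, dx/(1+|x|^{N+2\s}))$, and satisfies $\cL^\s_\mu u = -g$ in $\R^N \setminus \{0\}$; applying~(\ref{eq:adjoint-op}) of Lemma~\ref{relationship-operators-0} to $u$ and $\xi$ gives $\int_{\R^N}(1-\eta)\Phi_\mu (-\Delta)^\s_{\Gamma_\mu}\xi\, dx = -\int_\Omega g\xi\, d\gamma_\mu$. Theorem~\ref{sec:dirichl-probl-bound-comp-corol}(v) applied to $w$ and $\xi$ gives $\int_\Omega w (-\Delta)^\s_{\Gamma_\mu}\xi\, dx = -\int_\Omega g\xi\, d\gamma_\mu$. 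The two correction integrals cancel, leaving the desired identity $\int_\Omega \Phi_\mu^\sOmega(-\Delta)^\s_{\Gamma_\mu}\xi\,dx = c_{\s,\mu}\xi(0)$.

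The main obstacle is that Theorem~\ref{Theorem B}, Lemma~\ref{relationship-operators-0} and Theorem~\ref{sec:dirichl-probl-bound-comp-corol}(v) are all stated for $C^2_c$ test functions, whereas here $\xi \in C^{1,1}_0(\Omega)$. This is handled by a density argument: approximate $\xi$ by $\xi_n \in C^\infty_c(\Omega)$ obtained by cutting $\xi$ off in a shrinking neighborhood of $\partial\Omega$ and mollifying. A direct inspection of the proof of Proposition~\ref{lambda-estimate} shows that the pointwise bound~(\ref{1.3-proof}) only requires the Lipschitz estimate $|\xi(x)-\xi(y)|\le C|x-y|$ and the second-difference bound $|2\xi(x)-\xi(x+z)-\xi(x-z)|\le C|z|^2$, both of which are uniformly controlled by $\|\xi_n\|_{C^{1,1}}$. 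Combined with pointwise a.e.\ convergence of $(-\Delta)^\s_{\Gamma_\mu}\xi_n$ to $(-\Delta)^\s_{\Gamma_\mu}\xi$ and the uniform integrable majorant $c\min\{\Lambda_\mu, |\cdot|^{-N-2\s}\}$, dominated convergence extends the three required identities from $C^2_c$ to $C^{1,1}_0(\Omega)$, completing the proof of~(\ref{1.2b}).
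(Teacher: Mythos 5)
Your construction $\Phi_\mu^\sOmega = \eta\Phi_\mu + w$ is a genuine variant of the paper's: the paper instead writes $\Phi_\mu^\sOmega = (\Phi_\mu - w_1) + w_2$, where $w_1$ is a global $C^2$ function agreeing with $\Phi_\mu$ on $\Omega^c$ and $w_2$ solves the Dirichlet problem with data $f_1 := \cL^\s_\mu w_1$. The two constructions are dual (you cut off the singular part, the paper subtracts a boundary extension), and both reduce matters to Theorem~\ref{sec:dirichl-probl-bound-comp-corol}. Your explicit check that $g = \cL^\s_\mu(\eta\Phi_\mu) \in L^\infty(\Omega)$ via the commutator identity is a welcome addition: the cancellation $\eta\cL^\s_\mu\Phi_\mu=0$ removes the $\mu|x|^{-2\s}\Phi_\mu$ singularity, and what remains is the bounded tail $C_{N,\s}\int \Phi_\mu(y)(1-\eta(y))|x-y|^{-N-2\s}dy$ near the origin. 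The paper needs the analogous fact that $f_1=\cL^\s_\mu w_1$ lies in $L^\infty(\Omega,|x|^\rho dx)$, which actually forces $w_1$ to be chosen vanishing in a neighborhood of the origin (otherwise $\mu |x|^{-2\s}w_1$ is too singular for $\mu\ge 0$); the paper does not state this explicitly, whereas your cutoff makes it automatic. The verification of the distributional identity---splitting into the Theorem~\ref{Theorem B} contribution, the adjoint identity~(\ref{eq:adjoint-op}) applied to the smooth function $(1-\eta)\Phi_\mu$, and Theorem~\ref{sec:dirichl-probl-bound-comp-corol}(v) applied to $w$, with the two correction integrals cancelling---is exactly the paper's strategy.

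One caveat on the final density step. You claim that truncating $\xi$ in a shrinking collar of $\partial\Omega$ and mollifying yields $\xi_n \in C^\infty_c(\Omega)$ with $\|\xi_n\|_{C^{1,1}}$ uniformly bounded. If $C^{1,1}_0(\Omega)$ is read as ``vanishing on $\partial\Omega$'' rather than ``compactly supported in $\Omega$'', this is not correct: with a collar cutoff $\chi_\delta$ of width $\delta$ one has $|D^2\chi_\delta|\sim\delta^{-2}$ and $|\xi|\lesssim\delta$ on $\operatorname{supp}\nabla\chi_\delta$, so $|D^2(\chi_\delta\xi)|\sim\delta^{-1}$ blows up, and the constant $c_0(\xi_n)$ in Proposition~\ref{lambda-estimate} is not uniformly bounded. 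Your dominated convergence argument then fails as stated. That said, the paper's own proof takes $\xi\in C^2_0(\Omega)$ and makes no attempt at density either, so this gap already exists there; if $C^{1,1}_0(\Omega)$ is interpreted as compactly supported $C^{1,1}$ functions, your density argument is fine because mollification preserves $C^{1,1}$ bounds.
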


\begin{proof}
The uniqueness follows readily from Lemma~\ref{sec:dirichl-probl-bound-comp}. To obtain the existence and the distributional identity, let $w_1$ be a $C^2$-function on $\R^N$ such that $w_1=\Phi_\mu$ in $\R^N\setminus \Omega$. Moreover, let $f_1=\mathcal{L}_\mu^\s w_1$, let $w_2$ be the solution of (\ref{eq 2.1f}) with $f=f_1$
and let
$$
\Phi_\mu^\sOmega= \Phi_\mu-w_1+w_2,
$$
then $\Phi_\mu^\sOmega$ is a solution of (\ref{eq 1.1b}). Let $\xi\in C_0^2(\Omega)$.
Since $\Phi_\mu$ is in $C^2(\R^N\setminus \{0\})$,
$w_1$ is a bounded $C^2$-function on $\R^N$ and $\Phi_\mu-w_1=0$ in $\R^N\setminus \Omega$, we have, by Theorem~\ref{Theorem B} and (\ref{eq:adjoint-op}),
\begin{align*}
\int_{\Omega}(\Phi_\mu-w_1) (-\Delta)^\s_{\Gamma_\mu}  \xi \,dx &=
\int_{\R^N}(\Phi_\mu-w_1) (-\Delta)^\s_{\Gamma_\mu} \xi \,dx =
c_{\s,\mu}\xi(0)- \int_{\Omega} \xi \cL_\mu^\s w_1 d\gamma_\mu\\
&=c_{\s,\mu}\xi(0)- \int_{\Omega} f_1 \xi d\gamma_\mu.
\end{align*}
Moreover, by Theorem~\ref{sec:dirichl-probl-bound-comp-corol}, 
$$\int_{\Omega}w_2  (-\Delta)^\s_{\Gamma_\mu} \xi \,dx = \int_{\Omega}f_1 \xi d\gamma_\mu.$$
Combining these identities gives (\ref{1.2b}).   This completes the proof.
\end{proof}

We may now complete the proofs of parts (i) and (ii) of Theorem~\ref{theorem-C}.

\begin{proof}[Proof of Theorem~\ref{theorem-C}(i).]
We make use of the ansatz $u_k=k\Phi_\mu^\sOmega+ u_{f_+}- u_{f_-},$ where $f_\pm=\max\{0,\pm f\}$ and $u_{f_{+}} \in L^1(\Omega,\Lambda _\mu dx)$ resp. $u_{f_{-}} \in L^1(\Omega,\Lambda _\mu dx)$ satisfy the distributional identity (\ref{eq:distributional-k}) with $k=0$ and $f=f_{\pm}$, respectively.
To construct $u_{f_+}$, we define $f_n:= \min \{f_+,n\}$ for $n \in \N$. Since $f_+ \in C^\theta_{loc}(\bar\Omega\setminus \{0\})\cap L^1(\Omega,d\gamma_\mu)$, we then have $f_n \in C^\theta_{loc}(\bar\Omega\setminus \{0\}) \cap L^\infty(\bar \Omega)$, and $0 \le f_n\le f_{n+1}\le f$ for all $n$.
Let, for $n \in \N$, $u_n \in W^\s_{*}(\Omega)$ be the solutions corresponding to $f_n$ as given by Theorem~\ref{sec:dirichl-probl-bound-comp-corol}. We then have $0 \le u_n\le u_{n+1}$ and, by \eqref{l1}, 
\begin{equation*}
 \norm{u_n}_{L^1(\Omega,\Lambda _\mu dx)}\le d\norm{f_n}_{L^1(\Omega,d\gamma_\mu)} \le d\norm{f}_{L^1(\Omega,d\gamma_\mu)}\qquad \text{for all $n$ with some constant $d= d(\s,\mu,\rho,\Omega)>0$.}
  \end{equation*}
Consequently, by monotone convergence, there exists $u_{f_{+}}$ with $u_n \to u_{f_{+}}$ in $L^1(\Omega,\Lambda _\mu dx)$. Since also 
$f_n \to f_+ \in L^1(\Omega,d \gamma_\mu)$, we find that, for $\xi \in C^2_c(\Omega)$
\begin{align*}
\int_{\Omega}u_{f_{+}} (-\Delta)^{\s}_{\Gamma_\mu} \xi \,dx &= \lim_{n \to \infty} \int_{\Omega}u_n (-\Delta)^{\s}_{\Gamma_\mu} \xi \,dx
= \lim_{n \to \infty} \int_{\Omega}f_n \xi d \gamma_\mu\\
& = \int_{\Omega}f_+ \xi d \gamma_\mu.   
\end{align*}
Here we used Proposition~\ref{lambda-estimate} for the first equality. Consequently, $u_{f_+}$ satisfy the distributional identity (\ref{eq:distributional-k}) with $k=0$ and $f=f_{+}$. From this it follows that $u_{f_+}$ satisfies 
$$
\cL^\s_\mu u_{f_+} = f_+ \quad\ \text{in $\Omega \setminus \{0\}$,}
$$
whereas $u_{f_+} \equiv 0$ in $\Omega^c$ by construction. So in order show that $u_{f_+}$ satisfies (\ref{eq 2.1fk}) with $f=f_+$ in the sense defined in the introduction, we only need to show that $u_{f_+} \in L^\infty(\R^N \setminus \{0\})$. For this we note that 
Proposition~\ref{sec:dirichl-probl-bound-comp-corol-corol} implies that 
\begin{equation}
  \label{eq:Gamma-mu-comp-away-origin}
\|u_n\|_{L^\infty(\R^N \setminus B_{r_2})} \le c \Bigl(\|f_n\|_{L^\infty(\Omega \setminus B_{r_1})}+ \|f_n\|_{L^1(\Omega,d\gamma_\mu)}\Bigr)
\le c \Bigl(\|f_+\|_{L^\infty(\Omega \setminus B_{r_1})}+ \|f_+\|_{L^1(\Omega,d\gamma_\mu)}\Bigr)
\end{equation}
for fixed numbers $0<r_1<r_2$ and every $n \in \N$. Since $u_n \to u_{f_{+}}$ in $L^1(\Omega,\Lambda _\mu dx)$ and therefore a.e. in $\R^N$ after passing to a subsequence, we conclude that 
$$
\|u_{f_+}\|_{L^\infty(\R^N \setminus B_{r_2})} \le c \Bigl(\|f_+\|_{L^\infty(\Omega \setminus B_{r_1})}+ \|f_+\|_{L^1(\Omega,d\gamma_\mu)}\Bigr).
$$
Hence $u_{f_+} \in L^\infty(\R^N \setminus \{0\})$, as required.
 
By the same arguments, we see that $u_{f_-} L^\infty(\R^N \setminus \{0\})\cap L^1(\R^N)$ solves (\ref{eq 2.1fk}) with $f=f_-$ in the sense defined in the introduction, and it satisfy the distributional identity (\ref{eq:distributional-k}) with $k=0$ and $f=f_{-}$.
We thus conclude that $u_k=k\Phi_\mu^\sOmega+ u_{f_+}- u_{f_-}$ has the required properties. 
\end{proof}

\begin{proof}[Proof of Theorem~\ref{theorem-C}(ii).]
{\em Existence:} Let $f \in L^\infty(\Omega, |x|^{\rho}dx)$ for some $\rho < 2\s - \tau_+(\s,\mu)$, and $u_f \in W^\s_*(\Omega)$ be the unique solution of (\ref{eq 2.1f}). Then, by Theorem~\ref{sec:dirichl-probl-bound-comp-corol} and Proposition~\ref{teo 2}, the function $u_k=k\Phi_\mu^\sOmega+ u_{f}$ has the desired properties.\\
{\em Uniqueness:} Suppose that $u_k^1,u_k^2 \in L^1(\Omega,\Lambda_\mu dx)$ are solutions of problem (\ref{eq 2.1fk}) in the sense defined in the introduction which satisfy 
$$
 \lim_{x \to 0}\:\frac{u_k^1(x)}{\Phi_\mu(x)} = k =  \lim_{x \to 0}\:\frac{u_k^2(x)}{\Phi_\mu(x)}.
$$
Then $u:= u_k^1-u_k^2 \in L^1(\Omega)\cap L^\infty(\R^N \setminus \{0\})$ satisfies 
$$
\cL^\s_\mu u = 0 \quad \text{in $\Omega \setminus \{0\}$},\quad  u=0 \quad \text{in $\Omega^c$},\quad  \lim_{x \to 0}\:\frac{u(x)}{\Phi_\mu(x)}= 0.
$$
By Remark \ref{weak-solution-more-regular}(ii) and Lemma~\ref{sec:dirichl-probl-bound-comp}, it follows that $u =0$, hence $u_k^1 = u_k^2$.
\end{proof}

\setcounter{equation}{0}
\section{Nonexistence}
\label{sec:nonexistence}
This section is devoted to the proof of our nonexistence results. In the following subsection, we give the proof of Theorem~\ref{theorem-C}(iii), which is devoted to the case $\mu \ge \mu_0$ and to measurable functions $f \ge 0$ with $\int_{\Omega} f d\gamma_\mu = \infty$ in (\ref{eq 2.1fk}). Here, as before, we put $d\gamma_\mu = \Gamma_\mu dx$.

\subsection{Nonnegative  source functions $f\not\in L^1(\Omega,d\gamma_\mu)$ }
We start with the following lemma.

\begin{lemma}\label{lm 2.3}
Let $(r_n)_n$ be a decreasing sequence of positive numbers, and let $\delta_n$, $n \in \N$ be nonnegative  $L^\infty$-functions with supp $\,\delta_n\subset B_{r_n}(0)$ and 
$$
\int_\Omega \delta_n dx=\int_{B_{r_n}} \delta_n dx =1 \quad\ \text{for all $n \in \N$.}
$$
For any $n$, let $w_n \in W_*^\s(\Omega)$  be the unique solution of (\ref{eq 2.1f}) with $f:=f_n = \frac{\delta_n}{\Gamma_\mu}$. 
Then, after passing to a subsequence, we have 
$$
w_n \to w_\infty \quad\ \text{in $L^1(\Omega,\Lambda_\mu dx)$ and a.e. in $\Omega$,}
$$
where $w_\infty \in L^1(\Omega,\Lambda_\mu dx)$ satisfies the distributional identity 
\begin{equation}\label{3.0-infty}
  \int_{\Omega} w_\infty  (-\Delta)_{\Gamma_\mu}^\s \xi \, dx = \xi(0) \quad\ 
\text{for all $\xi\in C^{2}_0(\Omega)$.}
\end{equation}
\end{lemma}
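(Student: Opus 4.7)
\emph{Plan.} The plan is to (i) obtain uniform bounds on $(w_n)_n$ from Section~\ref{sec:dirichl-probl-bound}, (ii) extract via local compactness a subsequential a.e.\ limit $w_\infty$, (iii) upgrade to convergence in $L^1(\Omega,\Lambda_\mu dx)$ by a uniform integrability argument based on the dual estimate~\eqref{g-estimate}, and (iv) pass to the limit in the distributional identity. Throughout we tacitly assume $r_n\to 0$, which is implicit in the statement (otherwise the right-hand side of~\eqref{3.0-infty} would not be meaningful).

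For the first two blocks, the key observation is that $\|f_n\|_{L^1(\Omega,d\gamma_\mu)}=\int_\Omega\delta_n\,dx=1$ for every $n$. Theorem~\ref{sec:dirichl-probl-bound-comp-corol} then immediately yields $w_n\ge 0$, the uniform bound $\|w_n\|_{L^1(\Omega,\Lambda_\mu dx)}\le d$ from~\eqref{l1}, and the distributional identity
\[
\int_\Omega w_n\,(-\Delta)^\s_{\Gamma_\mu}\xi\,dx \,=\, \int_\Omega \xi\,\delta_n\,dx \qquad \text{for every } \xi\in C^2_c(\Omega).
\]
For fixed $0<r_1<r_2$ and $n$ large enough that $r_n<r_1$, the source $f_n$ vanishes on $\Omega\setminus B_{r_1}$, so Proposition~\ref{sec:dirichl-probl-bound-comp-corol-corol} gives $\|w_n\|_{L^\infty(\Omega\setminus B_{r_2})}\le C(r_1,r_2)$. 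Interior fractional regularity for $\cL^\s_\mu w_n = 0$ on $\Omega\setminus B_{r_1}$, together with a diagonal Arzel\`a--Ascoli extraction, should then produce a subsequence converging locally uniformly on $\R^N\setminus\{0\}$ to some $w_\infty\ge 0$, and hence a.e.\ on $\Omega$; Fatou's lemma places $w_\infty$ in $L^1(\Omega,\Lambda_\mu dx)$.

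The hard part will be the uniform integrability of $(w_n\Lambda_\mu)$ near the origin. To establish it, I will set $g_\eps:=\Lambda_\mu\,\mathbf{1}_{B_\eps}$ for $\eps>0$ and choose an exponent $\varrho<2\s-\tau_+(\s,\mu)$ large enough that $\|g_\eps\|_{L^\infty(\Omega,|x|^\varrho dx)}\to 0$ as $\eps\to 0$; such $\varrho$ can be produced by a short case analysis on the three regimes in the definition~\eqref{def-Lambda} of $\Lambda_\mu$. Inserting $g_\eps$ into the duality estimate~\eqref{g-estimate} then yields
\[
\int_{B_\eps} w_n\,\Lambda_\mu\,dx \,\le\, c\,\|g_\eps\|_{L^\infty(\Omega,|x|^\varrho dx)}\,\|f_n\|_{L^1(\Omega,d\gamma_\mu)} \,\longrightarrow\, 0 \qquad \text{as } \eps\to 0,
\]
uniformly in $n$. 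Combined with the a.e.\ convergence already obtained and Vitali's convergence theorem, this delivers $w_n\to w_\infty$ in $L^1(\Omega,\Lambda_\mu dx)$.

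To conclude, I would fix $\xi\in C^2_c(\Omega)$ (the extension to $C^2_0(\Omega)$ then follows by a routine cutoff approximation controlled by Proposition~\ref{lambda-estimate}). The right-hand side of the distributional identity converges to $\xi(0)$ by continuity of $\xi$ and since $(\delta_n)$ approximates $\delta_0$. For the left-hand side, the pointwise bound $|(-\Delta)^\s_{\Gamma_\mu}\xi|\le c_0\Lambda_\mu$ from Proposition~\ref{lambda-estimate}, combined with the $L^1(\Omega,\Lambda_\mu dx)$-convergence from the previous paragraph, yields
\[
\int_\Omega w_n\,(-\Delta)^\s_{\Gamma_\mu}\xi\,dx \,\longrightarrow\, \int_\Omega w_\infty\,(-\Delta)^\s_{\Gamma_\mu}\xi\,dx,
\]
which is~\eqref{3.0-infty}. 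The principal difficulty throughout is the concentration of $f_n$ at the origin: it prevents any elementary pointwise domination of $(w_n)$ near $0$, and forces us to rely on the weighted duality~\eqref{g-estimate} to rule out a singular part in the limiting measure $w_\infty\Lambda_\mu dx$.
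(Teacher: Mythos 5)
Your proposal is correct and follows essentially the same route as the paper's proof: uniform nonnegativity and the $L^1(\Omega,\Lambda_\mu dx)$ bound from (\ref{l1}), local $L^\infty$ and H\"older estimates away from the origin via Proposition~\ref{sec:dirichl-probl-bound-comp-corol-corol} and interior regularity to extract an a.e.\ limit, the duality estimate (\ref{g-estimate}) applied to $g_r=\Lambda_\mu \mathbf{1}_{B_r}$ to control mass near the origin uniformly in $n$, and then passage to the limit in the distributional identity. The paper carries out the uniform-integrability step by hand rather than citing Vitali, and makes the exponent choice $\varrho_0,\rho$ explicit via a case split on $\tau_+(\s,\mu)$ relative to $2\s-1$, but the idea is identical; you also correctly flag that $r_n\to 0$ is tacitly assumed, which is indeed needed for $\int_\Omega\xi\,\delta_n\,dx\to\xi(0)$.
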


\begin{proof}
We first note that $w_n \ge 0$ for all $n$ by Theorem~\ref{sec:dirichl-probl-bound-comp-corol}(i), and 
\begin{equation}
  \label{eq:uniform-bound-wn}
\|w_n\|_{L^1(\Omega,\Lambda_\mu dx)} \le d \int_{\Omega} \frac{\delta_n}{\Gamma_\mu}d\gamma_\mu = d \int_{\Omega}\delta_n dx = d \quad\ \text{for all $n \in \N$}
\end{equation}
by (\ref{l1}). Moreover, it follows from Proposition~\ref{sec:dirichl-probl-bound-comp-corol-corol} that 
\begin{equation}
  \label{eq:uniformly-bounded-l-infty}
\text{the sequence $w_n$ remains uniformly bounded in $L^\infty_{loc}(\R^N \setminus \{0\})$.}  
\end{equation}
Moreover, since for every $r>0$ there exists $n_r \in \N$ such that $f_n \equiv 0$ in $\Omega \setminus B_{r}(0)$ for $n \ge n_r$ and therefore 
$$
(-\Delta)^\s w_n = \frac{\mu}{|\cdot|^{2\s}}w_n \qquad \text{in $\Omega \setminus B_{r}(0)$ for $n \ge n_r$,}
$$
it follows from \cite{RS} that $(w_n)_n$ remains bounded in $C^\s_{loc}(\Omega \setminus \{0\})$. In particular, we may pass to a subsequence such 
\begin{equation}
  \label{eq:convergence-loc-l-infty}
\text{$w_n \to w_\infty$ in  $L^\infty_{loc}(\Omega \setminus \{0\})$ for a function $w_\infty \in L^\infty_{loc}(\Omega \setminus \{0\})$.}
\end{equation}
In particular, $w_n \to w_\infty$ a.e. in $\Omega$. Moreover, $w_\infty \in L^1(\Omega,\Lambda_\mu dx)$ as a consequence of (\ref{eq:uniform-bound-wn}) and Fatou's Lemma. 

Next, we claim that $w_n \to w_\infty$ in $L^1(\Omega,\Lambda_\mu dx)$. For this we define $\varrho_0>0$ by 
\begin{equation}\label{de rho0}
\varrho_0=\left\{\arraycolsep=1pt\begin{array}{lll}
   \s-\frac12\tau_+(\s,\mu) \quad\
   &{\rm if}\ \, \tau_+(\s,\mu) >2\s-1 ,\\[1mm]
 \phantom{   }
\frac12 \quad\ &{\rm   if } \ \,  \tau_+(\s,\mu) \leq 2\s-1
 \end{array}
 \right. 
 \end{equation}
and 
$$
\rho = \left \{
  \begin{aligned}
   & \s-\frac12\tau_+(\s,\mu) &&\quad\
   \text{if \,$\tau_+(\s,\mu) >2\s-1$,}\\
   &2\s -\tau_+(\s,\mu)- \frac{1}{4}&&\quad\
   \text{if \,$\tau_+(\s,\mu)  \le 2\s-1$.}     
  \end{aligned}
\right.
$$
Moreover, we define the functions $g_r= \Lambda_\mu \chi_r: \Omega \to \R$ for $r \in (0,1)$, where $\chi_r$ denotes the characteristic function of the ball $B_{r}(0)$. By (\ref{def-Lambda}) and the choices of $\rho_0$ and $\rho$, we have 
\begin{align*}
\|g_r\|_{L^\infty(\Omega,|x|^\rho dx)}&\leq
\left\{
\begin{aligned}
  &r^\rho &&\quad\ {\rm if}\ \, \tau_+(\s,\mu) >2\s-1,\\
  &r^\rho (1- \ln r) &&\quad\ {\rm   if } \ \,  \tau_+(\s,\mu) =2\s-1,\\
&r^{\rho+1-2\s+\tau_+(\s,\mu)} &&\quad\ {\rm   if } \ \,  \tau_+(\s,\mu) <2\s-1
\end{aligned}
 \right.\\
&\le c\, r^{\varrho_0} 
\end{align*}
with a constant $c>0$ independent of $r \in (0,1)$. Since $\rho < 2\s -\tau_+(\s,\mu)$, we may apply (\ref{g-estimate}) with $g=g_r$ and deduce that 
$$
\|w_n\|_{L^1(B_{r}(0)),\Lambda_\mu dx)} = \int_{\Omega}g_r w_n\,dx 
\le c \|g_r \|_{L^\infty(\Omega,|x|^{\rho} dx)} \int_{\Omega} \frac{\delta_n}{\Gamma_\mu}d\gamma_\mu = 
c \|g_r\|_{L^\infty(\Omega,|x|^{\rho})}\le c r^{\varrho_0}
$$
with a (possibly different) constant $c>0$ independent of $r \in (0,1)$ and $n \in \N$. The same estimate then follows for $w_\infty$ in place of $w_n$, which implies that 
$$
\int_{B_r} |w_n-w_\infty|\,\Lambda_\mu dx  \leq   \int_{B_r} |w_n|\,\Lambda_\mu dx + \int_{B_r} |w_\infty|\,\Lambda_\mu dx \leq 2 c  r^{\varrho_0}\quad \text{for $n \in \N$.}
$$
Combining this estimate with the fact with (\ref{eq:uniformly-bounded-l-infty}), (\ref{eq:convergence-loc-l-infty}) and the fact that $\Omega$ is bounded, we see that 
$$
\limsup_{n \to \infty}\int_{\Omega} |w_n-w_\infty|\,\Lambda_\mu dx \le 2 c  r^{\varrho_0}\quad\ \text{for every $r \in (0,1)$.}
$$
Since $\varrho_0>0$, we thus conclude that $w_n \to w_\infty$ in $L^1(\Omega,\Lambda_\mu dx)$, as claimed. 

It thus remains to prove the distributional identity \eqref{3.0-infty}. For this we let $\xi\in C^{2}_0(\Omega)$. It then follows from Proposition~\ref{lambda-estimate} and Theorem~\ref{sec:dirichl-probl-bound-comp-corol} that 
\begin{align*}
\int_{\Omega} w_\infty  (-\Delta)_{\Gamma_\mu}^\s \xi \, dx &= \lim_{n \to \infty} \int_{\Omega} w_n  (-\Delta)_{\Gamma_\mu}^\s \xi \, dx   \\
&=
\lim_{n \to \infty} \int_{\Omega} f_n \xi \, d\gamma_\mu =\lim_{n \to \infty} \int_{\Omega} \delta_n \xi \, dx = \xi(0).
\end{align*}
We complete the proof. \end{proof}

We may now complete the 

\begin{proof}[Proof of Theorem \ref{theorem-C}(iii)]
By contradiction, we assume that problem (\ref{eq 1.1f}) has a nonnegative solution of $u_f \in L^1(\R^N,\frac{dx}{1+|x|^{N+2s}}) \cap L^\infty_{loc}(\R^N \setminus \{0\})$. By Remark\eqref{weak-solution-more-regular}, we have $u_f \in W^\s_*(\Omega)$, and $u_f$ satisfies $\cL^\s_\mu u_f  = f$ in $\Omega$ in the sense of Definition \ref{weak-solution}. Let $\{r_n\}_n$ be a sequence of strictly decreasing positive numbers converging to zero.
From (\ref{f2}) and since $f\in L^\infty_{loc}(\overline{\Omega}\setminus \{0\})$, we have that,  for any $r_n$, 
$$
 \lim_{r\to0^+} \int_{B_{r_n}\setminus B_r}f(x) d\gamma_\mu  =+\infty,
$$
then there exists $s_n\in (0,r_n)$ such that
$$
  \int_{B_{r_n}\setminus B_{s_n}}f(x)  d\gamma_\mu =n,
$$
In the following, let $\chi_n$ denote the characteristic function of $B_{r_n}\setminus B_{s_n}$, 
and define $\delta_n=\frac1n \Gamma_\mu f\chi_{n} \in L^\infty(\Omega)$, $n \in \N$. Then the sequence $\delta_n$ satisfies the assumptions of Lemma~\ref{lm 2.3}. Let, for $n \in \N$, $w_n \in W_*^\s(\Omega)$ be the unique solution of (\ref{eq 2.1f}) with $f:=f_n = \frac{\delta_n}{\Gamma_\mu}= \frac1n f\chi_{n}$. The comparison principle given in Lemma~\ref{sec:dirichl-probl-bound-comp} then shows that
$\frac{u_f}{n} \ge w_n$ in $\Omega$ for every $n \in \N$. In particular, this implies that $w_n \to 0$ a.e. in $\Omega$. 
On the other hand, Lemma~\ref{lm 2.3} implies that $w_n \to w_\infty$ a.e. in $\Omega$, where $w_\infty \in L^1(\Omega,\Lambda_\mu dx)$ satisfies the distributional identity (\ref{3.0-infty}), so it cannot hold $w_\infty =0$ a.e. in $\Omega$. This contradiction shows 
$u_f$ cannot exist, and the proof is finished.
\end{proof}

\subsection{Nonexistence in the case $\mu< \mu_0$}
\label{sec:nonexistence-case-mu}

In this subsection we give the proof of Theorem~\ref{theorem-D}. Thus, we consider $\mu<\mu_0$ here in contrast to the remainder of the paper, and we let $f \in L^\infty_{loc}(\overline{\Omega} \setminus \{0\})$ be an arbitrary nonnegative function.  

 \begin{proof}[Proof of Theorem \ref{theorem-D}]
By contradiction, we assume that $u_0$ is a nontrivial nonnegative solution of (\ref{eq 1.1f}). By Remark~\ref{weak-solution-more-regular}(ii), $u_0 \in W^*_\s(\Omega)$ is a solution in the sense of Definition~\ref{weak-solution}. 

Fix $t>0$ be such that $\tilde u_0 \min \{t,- \frac{\mu}{|\cdot|^{2\s}}u_0\} \in L^\infty(\Omega)$ is a nontrivial function, and let 
$u_1 \in \cH^\s_0(\Omega)$ be the unique solution of
\begin{equation*}
\left\{
  \begin{aligned}
 (-\Delta)^\s u&=\tilde u_0 &&\quad \text{in \, $\Omega$},\\
u&=0 &&\quad  \text{in \, $\Omega^c.$}
  \end{aligned}
\right.
\end{equation*}
Then $u_1$ is continuous, and by the strong maximum principle for $(-\Delta)^\s$, there exists $r_0>0$ such that $B_{r_0}(0) \subset \Omega$ and $t_1>0$ such that
$$u_1\ge t_1 \quad{\rm in}\ \, B_{r_0}.$$
Now $w:= u_0- u_1 \in \cW^\s_*(\Omega)$ satisfies 
$$
(-\Delta)^\s w = f- \frac{\mu}{|\cdot|^{2\s}}u_0- \min \{t,- \frac{\mu}{|\cdot|^{2\s}}u_0\}  \ge 0 \quad \text{in $\Omega \setminus \{0\}$},\qquad \text{$u \equiv 0$ in $\Omega^c$}
$$
and $\liminf \limits_{x \to 0} \frac{w(x)}{\Phi_0(x)}\ge 0$. From Lemma~\ref{sec:dirichl-probl-bound-comp}, we infer that $w \ge 0$ and hence $u_0 \ge u_1$ in $\Omega$. In particular, 
$$
u_0\ge u_1\ge t_1\quad{\rm in}\ \, B_{r_0}.
$$
Next, define 
$$
u_2 \in W^\s_*(\Omega),\qquad  u_2(x)=\frac{\mu_0-\mu}{-\mu_0}\frac{t_1}{r_0^{\tau_+(\s,\mu_0)}}(\Gamma_{\mu_0}(x)-r_0^{\tau_+(\s,\mu_0)}),
$$
and observe that
   \begin{equation*}
\mathcal{L}^\s_{\mu_0} u =\frac{(\mu_0-\mu)t_1}{|x|^{2\s}} \quad\ \text{in $B_{r_0}\setminus\{0\}$},\qquad \   u \le 0\quad   \text{in $B_{r_0}^c,$}
\end{equation*}
whereas $u_0$ satisfies
 \begin{equation*}
\mathcal{L}^\s_{\mu_0} u_0= \frac{\mu_0-\mu }{|x|^{2\s}}u_0+f\ge \frac{(\mu_0-\mu)t_1}{|x|^{2\s}}\quad \text{in $B_{r_0}\setminus\{0\}$},\qquad   u_0\ge 0 \quad \text{in $B_{r_0}^c$.}
\end{equation*}
Since also $\liminf \limits_{x \to 0} \frac{u_0(x)-u_2(x)}{\Phi_{\mu_0}(x)}\ge 0$ 
Lemma~\ref{sec:dirichl-probl-bound-comp} implies that 
$u_0 \ge u_2$ in $B_{r_0}$ and therefore 
$$
u_0(x)\ge c |x|^{-\frac{N-2\s }{2}} \quad{\rm in}\ \, B_{\frac{r_0}2}
$$
with a constant $c>0$. Hence $u_0$ is a solution of
\begin{equation}\label{eq 5.1}
\arraycolsep=1pt\left\{
 \begin{array}{lll}
 \displaystyle  \mathcal{L}^\s_{\mu_0} u= \tilde f \ \ &{\rm in}\  \, \Omega\setminus\{0\} ,\\[2mm]
 \phantom{  \mathcal{L}^\s_{\mu_0} }
 \displaystyle  u= 0 \ \  &{\rm   in}\   \, \Omega^c,
 \end{array}\right.
\end{equation}
where $\tilde f=\frac{\mu_0-\mu }{|\cdot|^{2\s}}u_0+f\ge 0$ and
$$
\int_{\Omega} \tilde f d\gamma_{\mu_0} \ge \int_{B_r } (\mu_0-\mu)\frac{u_0(x)}{|x|^{2\s}}  d\gamma_{\mu_0} 
\ge (\mu_0-\mu) t_1\int_{B_{r_0} }|x|^{-N}dx = \infty.
$$
Applying Theorem~\ref{theorem-C}(iii) to $\tilde f$ and $\mu=\mu_0$, we obtain a contradiction.
Therefore we conclude that  problem (\ref{eq 1.1f}) has no nontrivial nonnegative solutions.
\end{proof}

%\begin{proof}[Proof of Theorem \ref{sec:main-theorem-special-case}]
%Theorem \ref{sec:main-theorem-special-case} is the particular case of Theorem \ref{theorem-C} with $\mu=0$.
%\end{proof}
 
\section{Appendix}
\label{sec:appendix}

\subsection{Appendix A. An $L^\infty$-estimate for solutions of linear inhomogeneous fractional equations}
\label{sec:appendix-a.-an}

In this section we prove the following.

\begin{proposition}
\label{appendix-main-proposition}
 Let $x_0 \in \R^N$, $r>0$, $\rho \in (0,1)$ and $V,f \in L^\infty(B_r(x_0))$. Moreover, let $u \in L^1(\R^N,\frac{dx}{1+|x|^{N+2\s}})$ be a distributional solution of 
$$
(-\Delta)^s u + V(x) u = f \qquad \text{in $B_r(x_0)$.}
$$
Then 
$$
\|u\|_{L^\infty(B_{\rho r}(x_0))} \le C \Bigl(\|f\|_{L^\infty(B_{r}(x_0))} + \|u\|_{L^1(\R^N,\frac{dx}{1+|x|^{N+2\s}})}\Bigr)
$$
with a constant $C=C(N,V,r,\rho)>0$.  
\end{proposition}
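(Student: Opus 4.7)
The plan is to combine the Green--Poisson representation for $(-\Delta)^\s$ on $B_r(x_0)$ with a Hardy--Littlewood--Sobolev bootstrap. After translating we may assume $x_0 = 0$. Setting $h := f - V u$, the equation becomes $(-\Delta)^\s u = h$ in $B_r$; note $u \in L^1(B_r)$ by the weighted $L^1$ hypothesis and the boundedness of $B_r$. Writing $G_r$ and $P_r$ for the Green function and Poisson kernel of $(-\Delta)^\s$ on $B_r$ (explicitly computed by Blumenthal--Getoor--Ray and M.\ Riesz; see e.g.\ Bogdan--Byczkowski), I would first establish the representation
\begin{equation*}
u(x) = \int_{B_r} G_r(x,y) h(y)\,dy + \int_{\R^N \setminus B_r} P_r(x,y) u(y)\,dy \qquad \text{for a.e.\ $x \in B_r$,}
\end{equation*}
either from the uniqueness of the Dirichlet problem with datum $u|_{B_r^c}$, or via an approximation in which $h$ and the exterior datum are truncated and regularised.

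The exterior term is easily controlled: the explicit Poisson formula gives, for $x \in B_{\rho r}$ and $y \in \R^N \setminus B_r$, the pointwise estimate $P_r(x,y) \le C(N,\s,r,\rho)\,(1+|y|)^{-N-2\s}$, hence
\begin{equation*}
\Bigl|\int_{\R^N \setminus B_r} P_r(x,y) u(y)\,dy\Bigr| \le C(N,\s,r,\rho)\,\|u\|_{L^1(\R^N,\frac{dx}{1+|x|^{N+2\s}})}.
\end{equation*}
For the Green term one uses $G_r(x,y) \le C|x-y|^{2\s-N}$, valid because $N \ge 2 > 2\s$. Hardy--Littlewood--Sobolev then yields that the operator $v \mapsto \int_{B_r} G_r(\cdot,y) v(y)\,dy$ maps $L^p(B_r) \to L^q(B_r)$ continuously with $\tfrac{1}{q} = \tfrac{1}{p} - \tfrac{2\s}{N}$ when $1 < p < \tfrac{N}{2\s}$, and into $L^\infty(B_r)$ when $p > \tfrac{N}{2\s}$.

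The bootstrap then runs on a finite decreasing sequence of radii $r = r_0 > r_1 > \cdots > r_K = \rho r$. One starts from $u \in L^1(B_{r_0})$, and given $u \in L^{p_k}(B_{r_k})$ one has $h \in L^{p_k}(B_{r_k})$ with norm controlled by $\|f\|_{L^\infty(B_r)} + \|V\|_{L^\infty(B_r)}\|u\|_{L^{p_k}(B_{r_k})}$. Applying the representation on a slightly larger ball (to absorb the singular behaviour of $G_{r_{k-1}}$) together with the Hardy--Littlewood--Sobolev estimate yields $u \in L^{p_{k+1}}(B_{r_{k+1}})$ with $\tfrac{1}{p_{k+1}} = \tfrac{1}{p_k} - \tfrac{2\s}{N}$, and a quantitative bound in terms of $\|f\|_{L^\infty(B_r)}$ and $\|u\|_{L^1(\R^N,\frac{dx}{1+|x|^{N+2\s}})}$. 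After finitely many steps $p_K > N/(2\s)$, and a final application on $B_{\rho r}$ produces the claimed $L^\infty$ estimate.

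The main technical point is the first step: verifying the Green--Poisson representation for a distributional solution with no prescribed local regularity. I would handle this by showing that the right-hand side of the representation defines a distributional solution of the same equation with the same exterior data, and then invoking the uniqueness of such solutions in the class $L^1(\R^N,\frac{dx}{1+|x|^{N+2\s}})$. The bootstrap itself is routine but requires some care in the choice of the radii $r_k$ so that the finitely many iterations close up with a manageable constant.
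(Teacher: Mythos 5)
Your proposal is correct in spirit and arrives at the same bootstrap scheme as the paper, but it differs in how the nonlocal exterior contribution is handled, and it has one minor technical gap. The paper never invokes the full Green--Poisson representation $u = \int_{B_r} G_r h + \int_{B_r^c} P_r\,u$. Instead it sets $w = u - \mathbb{G}_r[Vu]$ (which is a definition, not a representation theorem) and proves a separate lemma for $(-\Delta)^\s w = f$ with $f\in L^\infty$ by subtracting the truncated Riesz potential $u_1 = \kappa_{N,\s}\int_{B_r}|x-y|^{2\s-N}f(y)\,dy$ and appealing to the Bogdan--Byczkowski interior mean-value formula for the $\s$-harmonic remainder $u_2 = u - u_1$. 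This sidesteps entirely the need to know the Green--Poisson formula holds for distributional solutions, which you correctly flag as your main technical point. Your uniqueness-based justification is plausible, but it is a real piece of work (one needs a Liouville-type statement for $\s$-harmonic functions vanishing outside $B_r$ in the weighted $L^1$ class) and is precisely the kind of argument the paper structures itself to avoid. What your route buys, if you carry it through, is a cleaner one-pass derivation of the exterior term bound; what the paper's route buys is that it only invokes cited regularity results with no uniqueness argument.

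There is also a small but genuine gap in the bootstrap as stated: you invoke Hardy--Littlewood--Sobolev with $\frac1q=\frac1p-\frac{2\s}{N}$ for $1<p<\frac{N}{2\s}$, yet your first iteration starts at $p_0=1$, where HLS fails in the strong form. You need either the weak-type $(1,\frac{N}{N-2\s})$ endpoint with interpolation, or the $L^1\to L^\tau$ Green-potential estimate for every $\tau<\frac{N}{N-2\s}$ (the paper uses exactly this, Lemma~\ref{embedding}(iii) quoted from Ros-Oton--Serra, which gives $\|\mathbb{G}_r[h]\|_{L^\tau}\le c\|h\|_{L^1}$ under the strict inequality $1<\frac1\tau+\frac{2\s}{N}$). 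This is a routine fix but should not be elided, because the iteration's starting exponent and the resulting step size both depend on it.
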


We start with the following lemma.

\begin{lemma}
\label{appendix-harmonic}
 Let $x_0 \in \R^N$, $r>0$, $\rho \in (0,1)$ and let $u \in L^1(\R^N,\frac{dx}{1+|x|^{N+2\s}})$ satisfy
$$
(-\Delta)^s u = 0 \qquad \text{in $B_r(x_0)$}
$$
in distributional sense. Then $u \in C^\infty(B_{r}(x_0))$, and there exists a constant $C=C(N,r,\rho)$ with  
$$
\|u\|_{L^\infty(B_{\rho r}(x_0))} \le C \|u\|_{L^1(\R^N,\frac{dx}{1+|x|^{N+2\s}})}.
$$
\end{lemma}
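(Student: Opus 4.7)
The plan is to exploit the Poisson integral representation for $\s$-harmonic functions in a ball. After translating so that $x_0=0$, the target representation is
\[
u(x) = \gamma_{N,\s} \int_{\R^N \setminus B_r}\left(\frac{r^2-|x|^2}{|y|^2-r^2}\right)^{\!\s} \frac{u(y)}{|x-y|^N}\,dy \qquad \text{for } x \in B_r,
\]
valid for $u \in L^1(\R^N,\frac{dy}{1+|y|^{N+2\s}})$ that is $\s$-harmonic in $B_r$ (this is the classical fractional Poisson formula, see e.g.\ Landkof's monograph). To handle merely distributional solutions, I would first mollify $u$ against a standard smooth bump $\eta_\varepsilon$: since $(-\Delta)^\s$ commutes with convolution, $u_\varepsilon := u \ast \eta_\varepsilon$ is smooth and classically $\s$-harmonic in $B_{r-\varepsilon}$, so it satisfies the Poisson formula pointwise. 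The identity for $u$ on any concentric ball $B_{r'}$ with $r'<r$ then follows by letting $\varepsilon\to 0$ with dominated convergence, using the $L^1$-hypothesis and a uniform kernel bound on a fixed compact set.

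Once the representation is available, the pointwise bound on $B_{\rho r}$ reduces to a kernel estimate. For $x \in B_{\rho r}$ and $y \in \R^N \setminus B_r$, I would split the integration into the annulus $\{r<|y|\le 2r\}$ and the exterior $\{|y|>2r\}$. On the annulus, $|x-y|\ge (1-\rho)r$, and the singularity $(|y|^2-r^2)^{-\s}$ is integrable because $\s<1$, giving a contribution bounded by a constant depending only on $N,\s,r,\rho$ times $\sup_{r<|y|\le 2r}|u(y)|/(1+|y|^{N+2\s})$ integrated appropriately. On the exterior, $|x-y|\ge |y|/2$ and $|y|^2-r^2\ge 3|y|^2/4$, so the kernel is bounded by $C(N,\s,r)|y|^{-N-2\s}$. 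Combining the two regimes yields the uniform pointwise majorization
\[
P_r(x,y) \le C(N,\s,r,\rho)\bigl(1+(|y|-r)^{-\s}\chi_{\{r<|y|\le 2r\}}\bigr)(1+|y|^{N+2\s})^{-1},
\]
and hence, integrating against $|u(y)|$, the desired $L^\infty$-estimate on $B_{\rho r}$. Smoothness $u\in C^\infty(B_r)$ comes as a by-product: on any ball $B_{r'}$ with $r'<r$, the kernel $P_r(\cdot,y)$ is $C^\infty$ with derivatives controlled uniformly in $y \in \R^N \setminus B_r$, so differentiation under the integral sign is legitimate.

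The main obstacle is the rigorous derivation of the Poisson representation from the purely distributional hypothesis. The mollification argument sketched above is the most direct route, but it requires verifying that convolution with a bump preserves both the $L^1$-integrability against the weight $(1+|y|)^{-N-2\s}$ and the $\s$-harmonicity on a shrunken ball --- the latter using that $(-\Delta)^\s$ commutes with $\ast\eta_\varepsilon$ at the distributional level. An alternative path, closer in spirit to the rest of the paper, is to invoke interior regularity for $(-\Delta)^\s$ from \cite{Ls07} to conclude at the outset that $u$ is classical in $B_r$, and then derive the Poisson formula by solving the Dirichlet problem with the prescribed exterior datum $u|_{\R^N \setminus B_r}$ and applying uniqueness. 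Either way, the remaining work is routine kernel arithmetic.
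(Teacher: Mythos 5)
The central idea — a representation formula for $\s$-harmonic functions followed by a kernel estimate — is the right one, but the specific choice of the \emph{spherical} Poisson kernel $P_r(x,y)$ for a fixed ball $B_r$ does not yield the asserted bound, and your own kernel estimate already reveals the problem. You write
\[
P_r(x,y) \le C\bigl(1+(|y|-r)^{-\s}\chi_{\{r<|y|\le 2r\}}\bigr)(1+|y|^{N+2\s})^{-1},
\]
but to conclude $\int_{\R^N\setminus B_r} P_r(x,y)|u(y)|\,dy \le C\|u\|_{L^1(\R^N,\frac{dy}{1+|y|^{N+2\s}})}$ one needs the pointwise majorization $P_r(x,y)(1+|y|^{N+2\s}) \le C$ to hold \emph{uniformly} in $y$, and that fails precisely on the annulus where $(|y|-r)^{-\s}$ blows up. The kernel singularity near $|y|=r$ is integrable against $dy$, but the hypothesis controls only the weighted $L^1$-mass of $u$, not its size near the sphere, so the annular contribution cannot be absorbed. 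Invoking $\sup_{r<|y|\le 2r}|u|$ at this point would make the argument circular, since interior boundedness is exactly what the lemma is meant to produce.

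The fix — and the route the paper takes — is to replace the single spherical Poisson kernel by a \emph{volume-averaged} one. Averaging the identities $u(x)=\int_{\R^N\setminus B_{r'}} P_{r'}(x,y)u(y)\,dy$ over a range of radii $r'\in[\rho' r, r]$ smooths out the boundary singularity (indeed $\int_{\rho' r}^{r}(|y|-r')^{-\s}\,dr'<\infty$ since $\s<1$), producing a representation $u=\tilde\Gamma*u$ in $B_{\rho r}$ where $\tilde\Gamma\in C^\infty(\R^N)\cap L^\infty(\R^N,(1+|y|^{N+2\s})dy)$. The paper simply cites this in the form of \cite[Theorem 3.12]{BB} (see also \cite[Theorem 2.6]{FW}), whereupon the $L^\infty$ bound is immediate with $C=\sup_{x\in B_{\rho r},\,y\in\R^N}|\tilde\Gamma(x-y)|(1+|y|^{N+2\s})$. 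Your derivation of the Poisson formula via mollification and your smoothness argument by differentiation under the integral are fine; it is only the uniform kernel bound that requires the averaging step you omitted.
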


\begin{proof}
Without loss of generality, we may assume that $x_0=0$. It has been proved in \cite[Theorem 3.12]{BB} (see also \cite[Theorem 2.6]{FW}) that $u \in C^\infty(B_{r})$, and that $u= \tilde \Gamma * u$ in $B_{\rho r}$
with a function $\tilde \Gamma \in C^\infty(\R^N) \cap L^\infty(\R^N,(1+|y|^{N+2\s}) dy)$ depending on $r$ and $\rho$ but not on $u$.  Consequently,  
$$
\|u\|_{L^\infty(B_{\rho r})} \le C \|u\|_{L^1(\R^N,\frac{dx}{1+|x|^{N+2\s}})}
\quad \text{with}\quad C:= \sup_{\stackrel{x \in B_{\rho r}}{y \in \R^N}}|\tilde \Gamma(x-y)|(1+|y|^{N+2\s})<\infty.
$$We complete the proof. 
\end{proof}

\begin{lemma}
\label{appendix-poisson-simple}
 Let $x_0 \in \R^N$, $r>0$, $\rho \in (0,1)$ and let $u \in L^1(\R^N,\frac{dx}{1+|x|^{N+2\s}})$, $f \in L^\infty(B_r(x_0))$ satisfy
$$
(-\Delta)^s u = f \quad\ \text{in $B_r(x_0)$}
$$
in distributional sense. Then there exists a constant $C=C(N,r,\rho)$ with  
$$
\|u\|_{L^\infty(B_{\rho r}(x_0))} \le C \Bigl(\|u\|_{L^1(\R^N,\frac{dx}{1+|x|^{N+2\s}})}+\|f\|_{L^\infty(B_r(x_0))}\Bigr).
$$
\end{lemma}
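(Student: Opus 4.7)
The plan is to reduce the statement to the $\s$-harmonic case of Lemma~\ref{appendix-harmonic} by subtracting off a suitable particular solution $v$ and applying the $\s$-harmonic bound to $u-v$. Without loss of generality I take $x_0 = 0$. Since $N \ge 2$ and $\s \in (0,1)$ we have $2\s < N$, so the Riesz kernel of order $2\s$ is locally integrable. Extending $f|_{B_r}$ by zero gives $\tilde f \in L^\infty(\R^N)$ supported in $\overline{B_r}$, and I would set
$$
v(x) := \kappa_{N,\s}\int_{\R^N} \frac{\tilde f(y)}{|x-y|^{N-2\s}}\,dy
$$
with $\kappa_{N,\s}$ chosen so that the identity $(-\Delta)^\s v = \tilde f$ holds as tempered distributions on $\R^N$.

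For the pointwise bound, the elementary estimate
$$
|v(x)| \le \kappa_{N,\s}\,\|f\|_{L^\infty(B_r)}\int_{B_{2r}(0)}|z|^{2\s-N}\,dz \le C(N,\s,r)\,\|f\|_{L^\infty(B_r)} \qquad \text{for all $x \in \R^N$}
$$
yields $\|v\|_{L^\infty(\R^N)} \le C(N,\s,r)\|f\|_{L^\infty(B_r)}$, and hence also
$$
\|v\|_{L^1(\R^N,\frac{dx}{1+|x|^{N+2\s}})} \le C(N,\s,r)\|f\|_{L^\infty(B_r)}
$$
since $(1+|x|^{N+2\s})^{-1} \in L^1(\R^N)$. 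Consequently $w := u-v$ lies in $L^1(\R^N,\frac{dx}{1+|x|^{N+2\s}})$ and satisfies $(-\Delta)^\s w = f-\tilde f \equiv 0$ in $B_r$ in the distributional sense.

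Applying Lemma~\ref{appendix-harmonic} to $w$ on $B_r$ with ratio $\rho$ gives
$$
\|w\|_{L^\infty(B_{\rho r})} \le C(N,r,\rho)\,\|w\|_{L^1(\R^N,\frac{dx}{1+|x|^{N+2\s}})} \le C(N,r,\rho)\Bigl(\|u\|_{L^1(\R^N,\frac{dx}{1+|x|^{N+2\s}})} + \|v\|_{L^1(\R^N,\frac{dx}{1+|x|^{N+2\s}})}\Bigr),
$$
and combining this with $\|u\|_{L^\infty(B_{\rho r})} \le \|v\|_{L^\infty(B_{\rho r})} + \|w\|_{L^\infty(B_{\rho r})}$ and the bounds on $v$ established above produces the claimed estimate.

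The main technical point to justify carefully is the distributional identity $(-\Delta)^\s v = \tilde f$ on $\R^N$; this is standard and follows from the identification of $|\xi|^{-2\s}$ as (a multiple of) the Fourier symbol of the Riesz kernel of order $2\s$, together with Fubini applied to the compactly supported bounded datum $\tilde f$. Everything else reduces to the triangle inequality and the already-established $\s$-harmonic bound of Lemma~\ref{appendix-harmonic}.
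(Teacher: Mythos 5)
Your proof takes essentially the same route as the paper: both subtract off the Riesz potential of $f$ extended by zero, use the global $L^\infty$ bound on that potential (together with integrability of the weight $\frac{dx}{1+|x|^{N+2\s}}$), and then apply Lemma~\ref{appendix-harmonic} to the resulting $\s$-harmonic remainder. The argument is correct, and you in fact supply slightly more explicit justification for the identity $(-\Delta)^\s v = \tilde f$ than the paper does.
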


\begin{proof}
Again we assume that $x_0=0$. We write $u=u_1+u_2$ with 
$$
u_1: \R^N \to \R, \qquad u_1(x)= \kappa_{N,s} \int_{B_1}|x-y|^{2s-N}f(y)\,dy,\qquad \kappa_{N,s}=\frac{\Gamma(\frac{N}{2}-s)}{4^s\pi^{N/2}\Gamma(s)}.
$$
In distributional sense, we then have 
$$
(-\Delta)^s u_1 = f \qquad \text{and}\qquad (-\Delta)^s u_2 = 0 \quad \text{in $B_1$.}
$$
Moreover,  
$$
\|u_1\|_{L^\infty(\R^N)}\le \kappa_{N,s} \|f\|_{L^\infty(B_1)} \int_{B_1}|x-y|^{2s-N}\,dy \le C_1 \|f\|_{L^\infty(B_1)}\quad\ \text{with}\quad C_1:= \frac{\kappa_{N,s}\omega_{N-1}}{2s}, 
$$
and by Lemma~\ref{appendix-harmonic} we have
\begin{align*}
\|u_2\|_{L^\infty(B_{\rho r}(x_0))} &\le C \|u_2\|_{L^1(\R^N,\frac{dx}{1+|x|^{N+2\s}})} \le 
C\bigl(\|u\|_{L^1(\R^N,\frac{dx}{1+|x|^{N+2\s}})}+\|u_1\|_{L^1(\R^N,\frac{dx}{1+|x|^{N+2\s}})}\bigr)\\[1.5mm]
&\le C\bigl(\|u\|_{L^1(\R^N,\frac{dx}{1+|x|^{N+2\s}})}+\|u_1\|_{L^\infty(\R^N)}\bigr)
\le C\bigl(\|u\|_{L^1(\R^N,\frac{dx}{1+|x|^{N+2\s}})}+\|f\|_{L^\infty(B_1)}\bigr)
\end{align*}
with possibly changing constants $C>0$ depending only on $N,r$, and $\rho$.
\end{proof}

We also need the following lemma

\begin{lemma}\label{embedding}\cite[Proposition 1.4]{RS0}
Let $r>0$. Then, for every $h \in L^1(B_r)$, there exists a distributional solution 
$u:= \mathbb{G}_r[h]  \in L^1(B_r)$ of $(-\Delta)^\s u = h$ in $B_r$ satisfying $u = 0$ on $\R^N \setminus B_r$ and  
given as $u(x)= \int_{B_r} G_r(x,y)h(y)dy$ for $x \in B_r$, where $G_r$ denotes the Green function for $(-\Delta)^\s$ on $B_r$. Moreover, if $h \in L^s(B_r)$ for some $s \ge 1$, we have the following estimates.
\begin{enumerate}
\item[(i)] $\|\mathbb{G}_r[h] \|_{L^\infty (B_r)}\le c\|h\|_{L^s(B_r)}$ if $\frac1s<\frac{2\s }N$;
\item[(ii)] $\|\mathbb{G}_r[h]\|_{L^\tau(B_r)}\le c\|h\|_{L^s(B_r)}$ if $\frac1s\le \frac1\tau+\frac{2\s}N$ and $s>1$;
\item[(iii)] $\|\mathbb{G}_r[h]\|_{L^\tau(B_r)}\le c\|h\|_{L^1(B_r)}$ if $1<\frac1\tau+\frac{2\s}N.$
\end{enumerate}
 \end{lemma}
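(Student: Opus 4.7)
The plan is to prove this statement by reducing everything to classical mapping properties of the Riesz kernel. Since the lemma is attributed to \cite[Proposition 1.4]{RS0}, the existence of the solution and the representation via the Green function $G_r(x,y)$ of $(-\Delta)^\s$ on $B_r$ can be invoked directly from the cited reference; the remaining task is to verify the three $L^s$--$L^\tau$ estimates. The key analytic ingredient is the explicit Riesz-type formula of Blumenthal--Getoor--Riesz,
$$
G_r(x,y) = \kappa_{N,\s} |x-y|^{2\s-N}\int_0^{\eta(x,y)}\frac{t^{\s-1}}{(1+t)^{N/2}}\,dt,\qquad \eta(x,y) = \frac{(r^2-|x|^2)(r^2-|y|^2)}{r^2|x-y|^2},
$$
from which one reads off the pointwise bound
$$
G_r(x,y)\le C_{N,\s}\,|x-y|^{2\s-N}\qquad \text{for all }x,y\in B_r,
$$
since the integral $\int_0^{\infty} t^{\s-1}(1+t)^{-N/2}dt$ converges (using $0<\s<1$ and $N\ge 2$, so $N/2>\s$). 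This reduces the three estimates to bounds on the truncated Riesz potential $I_{2\s}^r h(x):=\int_{B_r}|x-y|^{2\s-N}|h(y)|\,dy$.

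For part (i), when $\tfrac{1}{s}<\tfrac{2\s}{N}$, the dual exponent $s'$ satisfies $s'(N-2\s)<N$, so $y\mapsto |x-y|^{2\s-N}\in L^{s'}(B_r)$ with a bound uniform in $x\in B_r$. Hölder's inequality then yields
$$
\|\mathbb{G}_r[h]\|_{L^\infty(B_r)}\le C_{N,\s}\sup_{x\in B_r}\bigl\||\cdot-x|^{2\s-N}\bigr\|_{L^{s'}(B_r)}\|h\|_{L^s(B_r)} \le c\,\|h\|_{L^s(B_r)}.
$$
For part (ii), the condition $\tfrac{1}{s}\le \tfrac{1}{\tau}+\tfrac{2\s}{N}$ with $s>1$ is precisely the admissible range for the Hardy--Littlewood--Sobolev inequality applied to $I_{2\s}$. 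Combining HLS with the elementary inclusion $L^{s}(B_r)\hookrightarrow L^{s_0}(B_r)$ for $s_0\le s$ on the bounded domain $B_r$ (to reduce to equality in the HLS exponent relation) gives $\|I_{2\s}^r h\|_{L^\tau(B_r)}\le c\|h\|_{L^s(B_r)}$.

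For the endpoint (iii) with $s=1$, the weak-type estimate $I_{2\s}:L^1\to L^{N/(N-2\s),\infty}$ is classical; the strict inequality $1<\tfrac{1}{\tau}+\tfrac{2\s}{N}$ forces $\tau<\tfrac{N}{N-2\s}$, so we lie strictly below the endpoint. A direct layer-cake computation (or Kolmogorov's equivalence between strong and weak norms at subcritical exponents combined with the fact that $B_r$ has finite measure) then upgrades the weak bound to the desired strong bound $\|\mathbb{G}_r[h]\|_{L^\tau(B_r)}\le c\|h\|_{L^1(B_r)}$.

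The main technical point, rather than a true obstacle, is justifying that the distributional solution produced by the Green function representation is unique in the class $L^1(B_r)$ and coincides with $\mathbb{G}_r[h]$; this follows from the maximum principle for $(-\Delta)^\s$ on $B_r$ with zero exterior Dirichlet data, established in \cite{RS0}. All the above $L^p$ estimates are independent of this uniqueness issue, so the plan amounts to: (a) quote existence and the Green function representation from \cite{RS0}; (b) derive the pointwise bound on $G_r$ from the Blumenthal--Getoor formula; and (c) invoke Hölder/HLS/weak-type Riesz potential estimates to obtain (i), (ii) and (iii) respectively.
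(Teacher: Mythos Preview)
The paper does not prove this lemma at all: it is simply quoted from \cite[Proposition 1.4]{RS0}, so there is no ``paper's own proof'' to compare against. Your sketch is a correct and standard way to derive these estimates---the pointwise domination of $G_r$ by the Riesz kernel together with H\"older, Hardy--Littlewood--Sobolev, and the weak-type $(1,\tfrac{N}{N-2\s})$ bound is exactly how such results are typically obtained. One small remark: in part~(ii) it is cleaner to argue on the $\tau$ side (apply HLS with the critical target exponent $\tau_0\ge\tau$ and then use $L^{\tau_0}(B_r)\hookrightarrow L^{\tau}(B_r)$) rather than lowering $s$, since lowering $s$ all the way to the endpoint $s_0=1$ would push you out of the HLS range; but either route works once you handle the borderline cases via part~(i) or~(iii).
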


\begin{proof}[Proof of Proposition~\ref{appendix-main-proposition}]
Again we may assume that $x_0=0$. The proof follows a bootstrap procedure. We choose $\tau_0>1$ with $1<\frac{1}{\tau_0}+\frac{2\s}N$ and define inductively, for every positive integer $i$,
$$
\tau_{i+1}:= \left \{
  \begin{aligned}
  &\frac{\tau_i N}{N-2\s \tau_i}&&\quad\ \text{if $\tau_i < \frac{N}{2\s}$},\;\\
  &\tau_i &&\quad\ \text{if $\tau_i \ge \frac{N}{2\s}$.}    
  \end{aligned}
\right.
$$
Adjusting the value of $\tau_0 = \tau$ if necessary, we may assume that $\tau_i \not = \frac{N}{2\s}$ for all $i \in \N$. 
Since $\tau_{i+1}-\tau_i=\frac{2\s \tau_i}{N-2\s \tau_i} \ge \frac{2\s   }{N-2\s }$ if 
$\tau_i<\frac{N}{2\s}$, it follows that there is a minimal $i_0 \ge 1$ such that $\tau_{i_0}>\frac{N}{2\s}$.
We then fix $\rho \in (0,1)$ sufficiently close to $1$ so that $\rho^{\tau_{i_0}}> \frac{1}{2}$.
Recalling the definition of $\cG_r$ from Lemma~\ref{embedding}, we now write $u= w + \cG_r (Vu)$, and we note that 
\begin{equation}
  \label{eq:tau-bootstrap-1}
\|\mathbb{G}_r[Vu]\|_{L^1(B_{r})} \le \|\mathbb{G}_r[Vu]\|_{L^{\tau_0}(B_{r})} \le  c\|Vu\|_{L^1(B_{r})} \le c \|u\|_{L^1(B_{r})}
\end{equation}
by Lemma~\ref{embedding}(iii). Hence $w = u-\mathbb{G}_r[Vu] \in L^1(\R^N,\frac{dx}{1+|x|^{N+2\s}})$, and it is a solution of 
$(-\Delta)^\s w = f$ in $B_{r}$. By Lemma~\ref{appendix-poisson-simple} and (\ref{eq:tau-bootstrap-1}), it follows that 
$$
\|w\|_{L^\infty(B_{\rho r})} \le c \Bigl(\|w\|_{L^1(\R^N,\frac{dx}{1+|x|^{N+2\s}})}+\|f\|_{L^\infty(B_r)}\Bigr) \le c \Bigl(\|u\|_{L^1(\R^N,\frac{dx}{1+|x|^{N+2\s}})}+\|f\|_{L^\infty(B_r)}\Bigr).
$$
Therefore, using (\ref{eq:tau-bootstrap-1}) again, we infer that 
$$
\|u\|_{L^{\tau_0}(B_{\rho r})} \le \|w\|_{L^{\tau_0}(B_{\rho r})}  + \|\mathbb{G}_r [Vu]\|_{L^{\tau_0}(B_{\rho r})} \le c \Bigl(\|u\|_{L^1(\R^N,\frac{dx}{1+|x|^{N+2\s}})}+\|f\|_{L^\infty(B_r)}\Bigr).
$$
We shall now prove inductively
\begin{equation}
  \label{eq:inductive-inequality}
\|u\|_{L^{\tau_i}(B_{\rho^{i} r})} \le c \Bigl(\|u\|_{L^1(\R^N,\frac{dx}{1+|x|^{N+2\s}})}+\|f\|_{L^\infty(B_r)}\Bigr) \qquad \text{for $i=0,\dots,i_0-1$.}
\end{equation}
The case $i=0$ has already been considered. Now suppose that (\ref{eq:inductive-inequality}) holds for some $i \in \{0,\dots,i_0-2\}$. We then write $u= \tilde w + \mathbb{G}_{\rho^{i}r}[Vu]$. By Lemma~\ref{embedding}(ii),
\begin{equation}
  \label{eq:tau-bootstrap-induction}
\|\mathbb{G}_{\rho^i r}[Vu]\|_{L^{1}(B_{\rho^i r})}\le c \|\mathbb{G}_{\rho^i r}(Vu)\|_{L^{\tau_{i+1}}(B_{\rho^i r})} \le  c\|Vu\|_{L^{\tau_i}(B_{\rho^i r})} \le c \|u\|_{L^{\tau_i}(B_{\rho^i r})}.
\end{equation}
Moreover $\tilde w = u-\mathbb{G}_{\rho^i r}[Vu] \in L^1(\R^N,\frac{dx}{1+|x|^{N+2\s}})$ is a solution of 
$(-\Delta)^\s \tilde w = f$ in $B_{\rho^i r}$, and therefore, by Lemma~\ref{appendix-poisson-simple} and \eqref{eq:tau-bootstrap-induction}, 
\begin{align*}
 \|\tilde w\|_{L^\infty(B_{\rho^{i+1} r})} &\le C \Bigl(\|\tilde w\|_{L^1(\R^N,\frac{dx}{1+|x|^{N+2\s}})}+\|f\|_{L^\infty(B_{\rho^i r})}\Bigr)\\
& \le c \Bigl(\|u\|_{L^1(\R^N,\frac{dx}{1+|x|^{N+2\s}})} + \|u\|_{L^{\tau_i}(B_{\rho^i r})} +  \|f\|_{L^\infty(B_r)}\Bigr) \le c \Bigl(\|u\|_{L^1(\R^N,\frac{dx}{1+|x|^{N+2\s}})} +  \|f\|_{L^\infty(B_r)}\Bigr).
\end{align*}
Here we used the induction hypothesis in the last step. Using this estimate, (\ref{eq:tau-bootstrap-induction}) and again the induction hypothesis, we find that  
\begin{align*}
\|u\|_{L^{\tau_{i+1}} (B_{\rho^{i+1} r})} &\le \|\tilde w\|_{L^{\tau_{i+1}}(B_{\rho^{i+1} r})}  + \|\mathbb{G}_r [Vu]\|_{L^{\tau_{i+1}}(B_{\rho^{i+1} r})}
\\&\le c \Bigl(\|\tilde w\|_{L^{\infty}(B_{\rho^{i+1} r})}  + \|u\|_{L^{\tau_{i}}(B_{\rho^{i} r})}\Bigr)
\le c \Bigl(\|u\|_{L^1(\R^N,\frac{dx}{1+|x|^{N+2\s}})} +  \|f\|_{L^\infty(B_r)}\Bigr).
\end{align*}
Inductively, we deduce that (\ref{eq:inductive-inequality}) holds for $i=1,\dots,i_0-1$. By Proposition \ref{embedding} $(i)$,
it then follows that
\begin{equation}
  \label{eq:tau-bootstrap-final}
\|\mathbb{G}_{\rho^{i_0} r}(Vu)\|_{L^\infty(B_{\rho^{i_0-1}r})} \le c \|u\|_{L^{\tau_{i_0-1}}(B_{\rho^{i_0-1} r})}\le 
c \Bigl(\|u\|_{L^1(\R^N,\frac{dx}{1+|x|^{N+2\s}})}+\|f\|_{L^\infty(B_r)}\Bigr).
\end{equation}
Writing, similarly as before, $u = \tilde {\tilde w}+\mathbb{G}_{\rho^{i_0-1} r}[Vu]$, we see that $\tilde {\tilde w} \in L^1(\R^N,\frac{dx}{1+|x|^{N+2\s}})$ is a solution of 
$(-\Delta)^\s \tilde w = f$ in $B_{\rho^{i_0-1} r}$, and therefore, by Lemma~\ref{appendix-poisson-simple} and (\ref{eq:tau-bootstrap-final}), 
\begin{equation}
  \label{eq:tau-bootstrap-final-1}
\|\tilde {\tilde w}\|_{L^\infty(B_{\rho^{i_0} r})} \le C \Bigl(\|\tilde{\tilde w}\|_{L^1(\R^N,\frac{dx}{1+|x|^{N+2\s}})}+\|f\|_{L^\infty(B_{\rho^{i_0-1} r})}\Bigr) \le c \Bigl(\|u\|_{L^1(\R^N,\frac{dx}{1+|x|^{N+2\s}})} +  \|f\|_{L^\infty(B_r)}\Bigr). 
\end{equation}
Combining (\ref{eq:tau-bootstrap-final}) and (\ref{eq:tau-bootstrap-final-1}) yields 
 \begin{align*}
\|u\|_{L^{\infty}(B_{\frac{r}{2}})}\le  \|u\|_{L^{\infty}(B_{\rho^{i_0} r})} &\le c \Bigl(\|w\|_{L^{\infty}(B_{\rho r})}  + \|\mathbb{G}_r [Vu]\|_{L^{\infty}(B_{\rho^{i_0} r})}\Bigr)
\\&\le c \Bigl(\|u\|_{L^1(\R^N,\frac{dx}{1+|x|^{N+2\s}})}+\|f\|_{L^\infty(B_r)}\Bigr),
\end{align*}
as claimed.
\end{proof}

\subsection{Appendix B. A density property}
\label{sec:appendix-b.-density}

In this appendix, we give the proof of a fact used in Remark \ref{weak-solution-more-regular}(i). More precisely, we shall prove the following.

\begin{proposition}
\label{appendix-b-prop}
Let $\Omega \subset \R^N$ be a bounded domain with $0 \in \Omega$. For every nonnegative function $v \in \cH^\s_0(\Omega)$, there exists a sequence of functions $v_n \in \cH^\s_0(\Omega)$, $n \in \N$ with $v_n \ge 0$, 
$v_n \equiv 0$ in a neighborhood of $0$ and $v_n \to v$ in $\cH^\s_0(\Omega)$ as $n\to+\infty$.
\end{proposition}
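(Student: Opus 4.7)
The plan is to approximate $v$ by $v_n=(1-a_n)v$, where $a_n$ is a smooth cutoff concentrated near the origin. I would fix $a\in C^\infty_c(\R^N)$ with $0\le a\le 1$, $a\equiv 1$ on $B_1$, $\supp a\subset B_2$, and set $a_n(x)=a(nx)$, so that $a_n\equiv 1$ on $B_{1/n}$ and $\supp a_n\subset B_{2/n}$. Then $v_n$ vanishes in the neighborhood $B_{1/n}$ of the origin, and $v_n\ge 0$ since $0\le 1-a_n\le 1$ and $v\ge 0$. Because $1-a_n$ is bounded and Lipschitz on $\R^N$, and because $v\in\cH^\s_0(\Omega)\subset W^\s(\Omega)$ with $v\equiv 0$ on $\Omega^c$, Lemma~\ref{u-minus-lemma}(v) yields $v_n\in W^\s(\Omega)$, and part (iii) of the same lemma then gives $v_n\in\cH^\s_0(\Omega)$. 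Since $v_n\to v$ in $L^2(\Omega)$ trivially by dominated convergence, the whole task reduces to showing
$$
\cE^\s(v-v_n,v-v_n)=\cE^\s(a_n v,a_n v)\;\to\;0 \qquad \text{as }n\to\infty.
$$

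For the Dirichlet form I would use the pointwise split
$$
(a_n(x)v(x)-a_n(y)v(y))^2\le 2v(x)^2(a_n(x)-a_n(y))^2+2a_n(y)^2(v(x)-v(y))^2,
$$
which gives $\cE^\s(a_n v,a_n v)\le I_n+J_n$, with
$$
J_n=C_{N,\s}\int_{\R^N}\int_{\R^N}\frac{a_n(y)^2(v(x)-v(y))^2}{|x-y|^{N+2\s}}\,dx\,dy,\qquad I_n=\int_{\R^N}v(x)^2\,\Psi_n(x)\,dx,
$$
and $\Psi_n(x)=C_{N,\s}\int_{\R^N}(a_n(x)-a_n(y))^2|x-y|^{-N-2\s}\,dy$. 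The term $J_n$ is routine: $a_n(y)^2\le 1$ everywhere, $a_n(y)^2\to 0$ for each $y\neq 0$, and the kernel $(v(x)-v(y))^2|x-y|^{-N-2\s}$ is Lebesgue-integrable over $\R^N\times\R^N$ because $v\in\cH^\s(\R^N)$; dominated convergence produces $J_n\to 0$.

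The scaling-critical piece is $I_n$. The change of variables $y\mapsto y/n$ yields $\Psi_n(x)=n^{2\s}\Psi(nx)$, with
$$
\Psi(w)=C_{N,\s}\int_{\R^N}\frac{(a(w)-a(z))^2}{|w-z|^{N+2\s}}\,dz.
$$
Splitting the $z$-integral into $|w-z|\le 1$ and $|w-z|>1$ shows $\Psi\in L^\infty(\R^N)$, while for $|w|>3$ one has $a(w)=0$ and the integrand is supported in $B_2$, giving the decay $\Psi(w)\le C|w|^{-N-2\s}$. Hence $h(w):=|w|^{2\s}\Psi(w)$ is bounded on $\R^N$ and satisfies $h(w)\to 0$ as $|w|\to\infty$, and the key identity
$$
I_n=\int_{\R^N}\frac{v(x)^2}{|x|^{2\s}}\,h(nx)\,dx
$$
realizes the exact cancellation of the factor $n^{2\s}$ produced by the cutoff against the Hardy weight $|x|^{-2\s}$. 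Since $h(nx)\to 0$ for a.e.\ $x$ and $|h(nx)|\le\|h\|_{L^\infty}$, and since the fractional Hardy inequality~(\ref{eq:fractional-hardy-C-infty}), extended from $C^\infty_c(\R^N)$ to $\cH^\s_0(\Omega)$ by density, ensures $v^2/|\cdot|^{2\s}\in L^1(\R^N)$, dominated convergence delivers $I_n\to 0$.

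The main obstacle is handling $I_n$: the naive Lipschitz estimate $|a_n(x)-a_n(y)|\le Cn|x-y|$ would produce an unusable factor of $n^{2}$, far worse than the correct fractional cost $n^{2\s}$ captured by the rescaling. The crucial structural input is the precise matching between this $n^{2\s}$ and the Hardy weight $|x|^{-2\s}$, and this is also where the standing hypothesis $N\ge 2>2\s$ enters, through the nontriviality of~(\ref{eq:fractional-hardy-C-infty}).
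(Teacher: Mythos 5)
Your proof is correct, and for the scaling-critical term it takes a genuinely different and cleaner route than the paper. The paper also splits $\cE^\s(v\tilde\eta_n,v\tilde\eta_n)$ into two pieces exactly as you do, and handles the ``easy'' piece by dominated convergence/absolute continuity, so the two approaches diverge only in the treatment of $I_n$. There, the paper does \emph{not} use your rescaling identity $\Psi_n(x)=n^{2\s}\Psi(nx)$: instead it applies the Lipschitz bound $|\tilde\eta_n(x)-\tilde\eta_n(y)|\le cn|x-y|$ on the diagonal block $B_{2/n}\times B_{2/n}$ (your ``naive'' estimate, but only locally), integrates $|x-y|^{2-N-2\s}$ over the shrinking ball to convert $n^2$ into the same critical factor $n^{2\s}$, and then kills $n^{2\s}\int_{B_{2/n}}v^2$ by H\"older against $|B_{2/n}|^{2\s/N}$ together with the Sobolev embedding $\cH^\s(\R^N)\hookrightarrow L^{2N/(N-2\s)}$; the off-diagonal blocks are estimated directly. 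Thus both proofs expose the same factor $n^{2\s}$, but the paper cancels it against the measure of a shrinking ball and the $L^{2N/(N-2\s)}$-integrability of $v$, whereas you cancel it against the Hardy weight $|x|^{-2\s}$ and the $L^1$-integrability of $v^2/|\cdot|^{2\s}$, which follows from the fractional Hardy inequality. Your route avoids the block decomposition and the H\"older step, consolidating everything into a single dominated-convergence argument, and it makes the role of the Hardy inequality visible; the price is that you invoke the Hardy inequality (with $\mu_0<0$) in a proof that the paper manages to run using only the Sobolev embedding.
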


\begin{proof}
We define the functions
$$
x \mapsto v_n(x)=v(x)\eta_n(|x|),
$$ 
where
$\eta_n(r)=\eta_0(nr)$ for $r\ge0$ and $\eta_0:[0,+\infty)\to[0,1]$ is a non-decreasing $C^\infty$ function such that $\eta_0(t)=0$ for $t\in[0,\frac12]$ and
$\eta_0(t)=1$ for $t\ge 1$.
We   show that $v_n\to v$ in  $\cH^\s_0(\Omega)$ as $n\to+\infty$, which is equivalent to that
$$
 v\tilde \eta_n\to 0\quad{\rm in}\quad  \cH^\s_0(\Omega)\ \ {\rm as}\ \, n\to+\infty,
$$
 where $\tilde \eta_n=1-\eta_n$.
 Note that
 \begin{equation}\label{Apped 2}
\norm{ v\tilde \eta_n}_{\cH^\s_0(\Omega)}^2\le \int_{\R^N\times \R^N} \frac{(v(x)-v(y))^2}{|x-y|^{N+2\s}}\tilde \eta_n^2(x)  \,dxdy +
\int_{\R^N\times \R^N} \frac{(\tilde \eta_m(x)-\tilde \eta_m(y))^2}{|x-y|^{N+2\s}}v^2(y)\,dx dy.
 \end{equation}
By direct computation, the first term of the right hand side of (\ref{Apped 2}) satisfies the estimate
\begin{equation*}
\int_{\R^N\times \R^N} \frac{(v(x)-v(y))^2}{|x-y|^{N+2\s}}\tilde \eta_m^2(x)  \,dxdy  \,\le \, \int_{B_{\frac1n} }\int_{\R^N}\frac{(v(x)-v(y))^2}{|x-y|^{N+2\s}}dy dx \,\to \, 0\quad {\rm as}\ \, n\to+\infty.
\end{equation*}
Since $\tilde \eta\in C^\infty([0,\infty))$ has compact support in $[0,1]$, then for the second term of the right hand side of (\ref{Apped 2}), we have that  $r=\frac{2}n$,
\begin{equation*}
  \int_{\R^N\times \R^N} \frac{(\tilde \eta_m(x)-\tilde \eta_m(y))^2}{|x-y|^{N+2\s}}v^2(y)\,dx dy   = \int_{B_r  } K_{n}(y) v^2(y)\,  dy+\int_{\R^N\setminus B_r  } K_{n}(y) v^2(y)\,  dy,
\end{equation*}
where
\begin{equation*}
K_n(x)=\int_{\R^N } \frac{(\tilde \eta_n(x)-\tilde \eta_n(z))^2}{|x-z|^{N+2\s}}dz
= \int_{B_r  } \frac{(\tilde \eta_n(x)-\tilde \eta_n(z))^2}{|x-z|^{N+2\s}}dz+\int_{\R^N\setminus B_r  } \frac{\tilde \eta_n(x)^2}{|x-z|^{N+2\s}}dz.
\end{equation*}
For $y,\, z\in B_r$, we have that $ny,\, nz\in B_{m_0}$ and
$$
\frac{(\tilde \eta_n(y)-\tilde \eta_n(z))^2}{|y-z|^{N+2\s}}= \frac{(\tilde \eta_0(ny)-\tilde \eta_0(nz))^2}{|y-z|^{N+2\s}}\le \norm{\tilde \eta_0}_{C^1(\R_+)}  n^2 |y-z|^{2-N-2\s}.
$$
Consequently,
\begin{align*}
&\int_{B_r } \int_{B_r } \frac{(\tilde \eta_n(y)-\tilde \eta_n(z))^2}{|y-z|^{N+2\s}}dz  v^2(y)\,  dy  \le  \norm{\tilde \eta_0}_{C^1(\R_+)}  n^2 \int_{B_r } \int_{B_r  }|y-z|^{2-N-2\s}v^2(y)\, dz dy  \\
  &= c\norm{\tilde \eta_0}_{C^1(\R_+)} n^{2\s}\int_{B_r}v^2(y)\, dy
 \le c\norm{\tilde \eta_0}_{C^1(\R_+)} \left(\int_{B_r}|v|^{\frac{2N}{N-2\s}}(y)\, dy\right)^{\frac{N-2\s}{N}} \, \to \, 0\quad{\rm as}\ \, n\to+\infty.
\end{align*}
For $y \in B_r(0)$ and $z\in \R^N\setminus B_r(0)$,
 \begin{align*}
\int_{B_r} \int_{\R^N\setminus B_r} \frac{(\tilde \eta_n(y)-\tilde \eta_n(z))^2}{|y-z|^{N+2\s}}dz  v^2(y)\,  dy  &\le 
   \int_{B_r } \int_{\R^N\setminus  B_r }  \frac{ 1}{|y-z|^{N+2\s}}dz v^2(y)\, dz     dy  \\
  &= c n^{2\s}\int_{B_r}v^2(y)\, dy \,\to \, 0\qquad{\rm as}\ \, n\to+\infty.
\end{align*}

For $y \in\R^N\setminus B_r(0)$ and $z\in B_r(0)$,
 \begin{align*}
\int_{\R^N\setminus B_r } \int_{B_r } \frac{(\tilde \eta_n(y)-\tilde \eta_n(z))^2}{|y-z|^{N+2\s}}dz  v^2(y)\,  dy  &\le 
   \int_{B_r } \int_{\R^N\setminus  B_r }  \frac{ 1}{|y-z|^{N+2\s}}dz v^2(y)\, dz v^2(y)\,   dy  \\
  &= c n^{2\s}\int_{B_r(0)}v^2(y)\, dy \,\to \,0\quad{\rm as}\ \, n\to+\infty.
\end{align*}
For $y,\, z \in\R^N\setminus B_r(0)$, we have
$$
\int_{\R^N\setminus B_r(0) } \int_{\R^N\setminus B_r(0) } \frac{(\tilde \eta_n(y)-\tilde \eta_n(z))^2}{|y-z|^{N+2\s}}dz  v^2(y)\,  dy =0.
$$\smallskip
As a consequence, we have that
$$\int_{\R^N\times \R^N} \frac{(\tilde \eta_m(x)-\tilde \eta_m(y))^2}{|x-y|^{N+2\s}}v^2(y)\,dx dy\to  0\quad{\rm as}\ \, n\to+\infty,$$
which ends the proof.
\end{proof}

\subsection{Appendix C. Proof of Remark \ref{weak-solution-more-regular}(ii)}
\label{sec:appendix-c.-proof}

The aim of this appendix is to give the proof of Remark~\ref{weak-solution-more-regular}(ii), which we restate in the next proposition for the reader's convenience. 

\begin{proposition}
  \label{sec:appendix-c.-proof-1}
Let $\Omega \subset \R^N$ be a bounded domain of class $C^2$ with $0 \in \Omega$, let $f \in L^\infty_{loc}(\overline{\Omega} \setminus \{0\})$, and let $u \in L^\infty_{loc}(\R^N \setminus \{0\}) \cap L^1(\R^N,\frac{dx}{1+|x|^{N+2\s}})$ be a distributional solution of 
\begin{equation*}
  \left\{
    \begin{aligned}
\cL^\s_\mu u  = f  \quad  {\rm in}\ \  \Omega \setminus \{0\}, \\
 u  =0    \quad  {\rm in}\ \   \Omega^c.\qquad%\\
\end{aligned}
\right.
\end{equation*}
Then $u \in W^\s_*(\Omega)$, and $u$ satisfies $\cL^\s_\mu u  = f$ in $\Omega$ in the sense of 
Definition \ref{weak-solution}. 
\end{proposition}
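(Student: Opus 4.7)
The plan is to establish the two assertions separately. First I will show $u\in W^\s_*(\Omega)$, i.e., that $u|_{\Omega_\epsilon}\in W^\s(\Omega_\epsilon)$ for every $\epsilon>0$; second, I will upgrade the hypothesized distributional identity to the weak identity of Definition~\ref{weak-solution} via a density argument. Fix $\epsilon>0$ small enough that $\overline{B_\epsilon}\subset\Omega$, pick a cutoff $\eta\in C^\infty(\R^N)$ with $\eta\equiv 0$ on $\overline{B_{\epsilon/4}}$ and $\eta\equiv 1$ on $\R^N\setminus B_{\epsilon/2}$, and set $\psi:=\eta u$. Since $u\in L^\infty_{loc}(\R^N\setminus\{0\})$ and $u\equiv 0$ on $\Omega^c$, $\psi$ is bounded on $\R^N$ with compact support contained in $\overline\Omega\setminus B_{\epsilon/4}$. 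Once $\psi\in\cH^\s(\R^N)$ is in hand, the membership $u\in W^\s(\Omega_\epsilon)$ follows immediately: picking $R$ with $\overline\Omega\subset B_R$ and setting $\Omega':=B_R\setminus\overline{B_{\epsilon/2}}$ gives $\Omega_\epsilon\subset\subset\Omega'$ and $\eta\equiv 1$ throughout $\Omega_\epsilon\cup\Omega'$, hence $\cE^\s_{\Omega_\epsilon,\Omega'}(u,u)=\cE^\s_{\Omega_\epsilon,\Omega'}(\psi,\psi)\le\cE^\s(\psi,\psi)<\infty$, while $u|_{\Omega_\epsilon}\in L^\infty(\Omega_\epsilon)\subset L^2(\Omega_\epsilon)$ is automatic. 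Since $\epsilon>0$ is arbitrary, $u\in W^\s_*(\Omega)$.

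The core step is to prove $\psi\in\cH^\s(\R^N)$. The plan is to identify $(-\Delta)^\s\psi$ on $\R^N$ with a bounded function $G$ having decay $G(x)=O(|x|^{-N-2\s})$ at infinity, and then to recognize $\psi$ as the unique Riesz-potential inverse of $G$. On $\operatorname{supp}(\eta)\cap\Omega$, interior Hölder regularity of $u$ (which follows from Proposition~\ref{appendix-main-proposition} applied to the reformulated equation $(-\Delta)^\s u=f-\mu u|x|^{-2\s}$, whose right-hand side is locally bounded away from the origin) combined with Ros-Oton–Serra boundary regularity of $u$ near $\partial\Omega$ (which lies at positive distance from $0$) validates the pointwise product identity
\[
(-\Delta)^\s\psi(x)=\eta(x)\bigl(f(x)-\mu u(x)|x|^{-2\s}\bigr)+C_{N,\s}\,\mathrm{p.v.}\!\int_{\R^N}u(y)\,\frac{\eta(x)-\eta(y)}{|x-y|^{N+2\s}}\,dy,
\]
where the first summand is bounded (since $|x|\ge\epsilon/4$ there and $f,u$ are locally bounded) and the commutator is bounded by the smoothness of $\eta$, the local regularity of $u$, and the weighted integrability $u\in L^1(\R^N,\tfrac{dx}{1+|x|^{N+2\s}})$. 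Off $\operatorname{supp}(\psi)$ one trivially has $(-\Delta)^\s\psi(x)=-C_{N,\s}\int\psi(y)|x-y|^{-N-2\s}\,dy$, bounded with the claimed decay. Hence $G\in L^q(\R^N)$ for every $q\in[1,\infty]$. Defining $w:=(-\Delta)^{-s}G$ via the Riesz potential yields $w\in\cH^\s(\R^N)\cap L^\infty(\R^N)$ with $(-\Delta)^\s w=G$ on $\R^N$ and $w(x)=O(|x|^{-(N-2\s)})$ at infinity. Then $\psi-w\in L^\infty(\R^N)$ is distributionally $\s$-harmonic on $\R^N$ and vanishes at infinity, so the Liouville theorem for the fractional Laplacian forces $\psi-w\equiv 0$, i.e.\ $\psi=w\in\cH^\s(\R^N)$.

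For the weak identity, let $v\in\cH^\s_0(\Omega)$ with $\operatorname{supp}(v)\subset\Omega\setminus\overline{B_r}$ for some $r>0$. I will approximate $v$ in $\cH^\s_0(\Omega)$ by a sequence $\varphi_n\in C^\infty_c(\Omega\setminus\overline{B_{r/2}})$, obtained by first truncating $v$ near $\partial\Omega$ using the standard density of $C^\infty_c(\Omega)$ in $\cH^\s_0(\Omega)$, then multiplying by a cutoff vanishing on $B_{r/2}$, and finally mollifying. For each $\varphi_n$, Fubini's theorem — legitimate because $\varphi_n$ has compact support in $\R^N\setminus\{0\}$, $(-\Delta)^\s\varphi_n$ decays like $|x|^{-N-2\s}$ at infinity, and $u\in L^1(\R^N,\tfrac{dx}{1+|x|^{N+2\s}})$ — gives $\int_{\R^N}u\,(-\Delta)^\s\varphi_n\,dx=\cE^\s(u,\varphi_n)$, so the hypothesized distributional identity rewrites as $\cE^\s_\mu(u,\varphi_n)=\int_\Omega f\varphi_n\,dx$. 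To pass to the limit, Part~1 applied with $\epsilon=r/4$ gives $u\in W^\s(\Omega_{r/4})$, and then Lemma~\ref{u-minus-lemma}(i) shows that $\cE^\s(u,\cdot)$ is bounded, hence continuous, on $\cH^\s_0(\Omega_{r/4})$; so $\cE^\s(u,\varphi_n)\to\cE^\s(u,v)$, while the Hardy and source terms converge by dominated convergence since $|x|^{-2\s}$, $f$, and $u$ are all bounded on the fixed region $\Omega\setminus\overline{B_{r/2}}$. The resulting identity $\cE^\s_\mu(u,v)=\int_\Omega fv\,dx$ is precisely the formulation of $\cL^\s_\mu u=f$ in the sense of Definition~\ref{weak-solution}.

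The hardest part will be the rigorous verification that $G$ is bounded, in particular the pointwise meaningfulness of the principal-value commutator on $\operatorname{supp}(\psi)$: this is where all the structural hypotheses on $u$ must be used, and it requires both interior regularity (to promote the distributional equation to a pointwise identity on $\operatorname{supp}(\eta)\cap\Omega$) and boundary regularity near $\partial\Omega$. Moreover, for $2\s\ge 1$ the commutator bound itself requires a second-order Taylor expansion of $\eta$ to exploit the symmetric cancellation $\mathrm{p.v.}\!\int(y-x)|x-y|^{-N-2\s}dy=0$, since a naïve Lipschitz estimate on $\eta$ alone produces a non-integrable singularity. Once $\psi\in\cH^\s(\R^N)$ is secured, the remaining ingredients — Liouville, density in $\cH^\s_0$, and Fubini — are standard.
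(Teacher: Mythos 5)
Your reduction --- cut off near the origin, show $\psi = \eta u \in \cH^\s(\R^N)$, then recover the weak identity by density --- is the same reduction the paper performs, and the density step at the end is fine. The gap lies in the claim that $G := (-\Delta)^\s\psi$ is bounded on $\R^N$. For $x$ outside $\supp(\psi)$ you write $(-\Delta)^\s\psi(x) = -C_{N,\s}\int\psi(y)|x-y|^{-N-2\s}\,dy$ and call this ``bounded with the claimed decay,'' but boundedness fails as $x$ approaches $\partial\Omega$ from the exterior: the Ros-Oton--Serra boundary estimate gives the sharp rate $\psi(y)\sim \delta(y)^{\s}$ near $\partial\Omega$ (with $\delta$ the distance to $\partial\Omega$), and substituting this into the integral yields $|(-\Delta)^\s\psi(x)|\ge c\,\delta(x)^{-\s}$ for $x\notin\overline{\Omega}$ near $\partial\Omega$. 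Hence $G\notin L^\infty(\R^N)$, and near $\partial\Omega$ one has $G\in L^q$ only for $q<1/\s$. This spoils the Riesz-inversion step: the Hardy--Littlewood--Sobolev route to $\cE^\s(w,w)<\infty$ needs $G\in L^{2N/(N+2\s)}$, and $\frac{2N}{N+2\s}<\frac{1}{\s}$ is equivalent to $\s<\frac{N}{2(N-1)}$, which fails for $\s$ near $1$ whenever $N\ge 3$. Likewise $w\in L^\infty(\R^N)$ via Hardy--Littlewood--Sobolev would need $G\in L^q$ with $q>\frac{N}{2\s}\ge\frac{1}{\s}$, which is unavailable; so the hypotheses of the Liouville argument are not secured either.

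The paper sidesteps this boundary pathology entirely. It shows that $\tilde u=\eta u$ solves a Dirichlet problem on $\Omega_{\eps_0}$ with bounded right-hand side, separately constructs the variational solution $u_*\in\cH^\s_0(\Omega_{\eps_0})$ of that same problem, concludes from Ros-Oton--Serra regularity that both $\tilde u$ and $u_*$ lie in $C^\s(\R^N)$, and then invokes the uniqueness theorem of \cite[Proposition 2.17]{Ls07} to get $\tilde u=u_*$. The Sobolev regularity of $\tilde u$ is thereby inherited from $u_*$, with no need to estimate $(-\Delta)^\s\tilde u$ globally or near $\partial\Omega$. To salvage your line of argument you would have to replace the global Riesz potential by the Green operator of the fractional Dirichlet problem on $\Omega_{\eps_0}$ --- which maps $L^\infty(\Omega_{\eps_0})$ into $\cH^\s_0(\Omega_{\eps_0})$ and automatically absorbs the $\delta^{\s}$ boundary layer --- but you would then still need a uniqueness principle to identify $\psi$ with the Green-potential solution, at which point you are essentially reproducing the paper's argument.
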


\begin{proof}
As before, we put $\Omega_\eps:= \Omega \setminus B_\eps(0)$ for $\eps>0$ small. For given $\eps>0$, we need to prove the following: 
\begin{enumerate}
\item[(i)] $u \in W^\s(\Omega_{\eps})$;
\item[(ii)] For every $v \in \cH^\s_0(\Omega_{\eps})$, we have 
  \begin{equation}
    \label{eq:claim-ii}
\cE_\mu^\s(u,v) = \int_{\Omega} f v\,dx.  
  \end{equation}
\end{enumerate}
Suppose for the moment that $(i)$ is already proved. Then for every $\xi \in C^\infty_c(\Omega_{\eps})$ we have 
$$
\cE_\mu^\s(u,\xi) = \int_{\Omega}u \cL^\s_\mu \xi \,dx = \int_{\Omega} f v\,dx. 
$$
Moreover, the map $\cH^\s_0(\Omega_{\eps}) \to \R$, $v \mapsto \cE_\mu^\s(u,v)$ is continuous, and $C^\infty_c(\Omega_{\eps})$ is dense in $\cH^\s_0(\Omega_{\eps})$ by \cite[Theorem 2.4]{DPV}. Hence (\ref{eq:claim-ii}) follows by approximation. 

It thus remains to prove $(i)$. For this we choose $\eps_0 \in (0,\eps)$, and we let $\eta \in C^\infty(\R^N)$ with $\eta \equiv 1$ in a neighborhood of $\Omega_{\eps}$ and $\eta \equiv 0$ on $B_{\eps_0}(0)$. It then suffices to show that $\tilde u:= \eta u \in \cH^\s_0(\Omega_{\eps_0})$. By essentially the same arguments as in the proof of Proposition~\ref{sec:dirichl-probl-bound-comp-corol-corol}, we see that $\tilde u \in L^\infty(\R^N)$ is a distributional solution of 
 \begin{equation}
\label{appendix-C-equation}
\left \{
\begin{aligned}
\cL^\s_\mu \tilde u  = \tilde f  \quad  {\rm in}\ \  \Omega_{\eps_0}, \\
  u  =0    \quad  {\rm in}\ \   \Omega_{\eps_0}^c \,%\\
\end{aligned}
\right.
\end{equation}
with $\tilde f = \eta f + C_{N,\s}g$, where 
$$
g: \Omega \to \R, \qquad g(x)= p.v.\int_{\R^N} \frac{\eta(x)-\eta(y)}{|x-y|^{N+2\s}}u(y)\,dy. 
$$
By the same arguments as the proof of Proposition~\ref{sec:dirichl-probl-bound-comp-corol-corol}, using the a priori assumption that $u \in L^\infty_{loc}(\R^N \setminus \{0\})$, we find that $g \in L^\infty(\Omega)$ and therefore $\tilde f \in L^\infty(\Omega)$. 
Hence $\tilde u \in C^\s(\R^N)$ by the main result in \cite[Prop. 1.1]{RS}. On the other hand, (\ref{appendix-C-equation}) also admits a unique weak solution $u_* \in \cH^\s_0(\Omega_\eps)$ which also satisfies $u_* \in C^\s(\R^N)$ by \cite[Prop. 1.1]{RS}.
It now follows from \cite[Proposition 2.17]{Ls07} that $\tilde u = u_*$, and hence $\tilde u \in \cH^\s_0(\Omega_{\eps_0})$, as required.
\end{proof}

 \bigskip\bigskip

\noindent{\bf Acknowledgments:}    H. Chen  is supported by NNSF of China, No: 11726614, 11661045  and by the Alexander von Humboldt Foundation.    T. Weth is supported by DAAD and BMBF (Germany) within the project 57385104.


\begin{thebibliography}{99}

%\bibitem {AB0} B. Abdellaoui and R. Bentifour, Caffarelli-Kohn-Nirenberg type inequalities of
%fractional order with applications, {\it J. Funct. Anal. 272}, 3998--4029 (2017).

\bibitem {ABP} B. Abdellaoui, M. Medina, I. Peral and A. Primo,
 The effect of the Hardy potential in some Calder\'on-Zygmund properties for the fractional Laplacian,
 {\it J. Diff. Eq. 260 (11),}  8160--8206 (2016).

\bibitem{BMP} B. Barrios, M. Medina and I. Peral, Some remarks on the solvability of non-local elliptic problems with the Hardy potential, {\em Commun. Contemp. Math. 16}, 1350046 (2014).

\bibitem{Bhakta-et-al} M. Bhakta, A. Biswas, D. Ganguly and L. Montoro, Integral representation of solutions using Green function for fractional Hardy equations, {\em J. Diff. Eq. 269,} 5573--5594 (2020).
 
%\bibitem {BBC} Ph. B\'{e}nilan, H. Brezis and M. Crandall, A semilinear elliptic equation in $L^1(\R^N )$, {\it Ann. Sc. Norm. Sup. Pisa Cl. Sci. 2}, 523-555 (1975).

%\bibitem {BD} K. Bogdan and B. Dyda, The best constant in a fractional Hardy in%equality,
%{\it  Math. Nachr. 284 (5-6),} 629--638 (2011).

\bibitem {BB} K. Bogdan  and T. Byczkowski, Potential theory for the $\alpha$-stable Schr\"odinger operator on bounded Lipschitz domains, {\it Studia Math, 133 (1),} 53--92 (1999).

\bibitem{BV16} C.~Bucur and E.~Valdinoci, {\it Nonlocal diffusion and applications}, volume~20 of Lecture Notes of the Unione Matematica Italiana, Springer, [Cham]; Unione Matematica Italiana, Bologna, 2016.
  
  
\bibitem {BDT} H. Brezis, L. Dupaigne and A. Tesei, On a semilinear elliptic equation with inverse-square potential, {\it Sel. Math. New ser. 11}, 1--7 (2005).

\bibitem{CS1} L. Caffarelli and L. Silvestre, An extension problem related to the fractional Laplacian,
{\it Comm. Part. Diff. Eq. 32}, 1245--1260 (2007).

\bibitem {CFQ} H. Chen and A. Quaas, Classification of isolated singularities of nonnegative solutions to fractional semilinear elliptic equations and the existence results,
 {\it J. Lond. Math. Soc. 97(2),} 196--221  (2018).

\bibitem {CV1} H. Chen and L. V\'{e}ron, Semilinear fractional elliptic equations involving measures, {\it J. Diff. Eq. 257,} 1457--1486 (2014).

%\bibitem {CV3}  H. Chen and L. V\'{e}ron, Weakly and strongly singular solutions of semilinear fractional elliptic equations,
% {\it Asymptotic Analysis 88}, 165-184 (2014).

\bibitem {CQZ} H. Chen, A. Quaas and F. Zhou, On nonhomogeneous elliptic equations with
the Hardy-Leray potentials,  {\it Accepted by J. d'Anal. Math. }  {\it arXiv:1705.08047}.

\bibitem {CZ} H. Chen and F. Zhou, Isolated singularities for  elliptic  equations with Hardy  operator and source nonlinearity,
{\it Disc. Cont. Dyn. Syst. A 38,}  2983--3002 (2018).

\bibitem{ChVe} H. Chen, L. V\'eron. Weak solutions of semilinear elliptic equations with Leray-Hardy potential and measure data, {\it Mathematics in Engineering} {\bf 1}, 391--418 (2019).

  
%\bibitem {CS} Z. Chen and R. Song, Estimates on Green functions and poisson kernels for symmetric stable process, {\it Math.
%Ann. 312}, 465-501 (1998).

 \bibitem {CC1} R. Cignoli  and M. Cottlar, An Introduction to Functional Analysis,
{\it North-Holland, Amsterdam} (1974).

\bibitem {C} F. C\^irstea, A complete classification of the isolated singularities for nonlinear elliptic equations with inverse square potentials,
  {\it Mem. Amer. Math. Soc. 227,}  (2014).

 \bibitem {DPV} E. Di Nezza, G. Palatucci and E. Valdinoci,  Hitchhiker's guide to the fractional Sobolev spaces,
{\it  Bull.   Sci. Math.   136 (5)},  521--573 (2012).

 \bibitem {Du} L. Dupaigne,  A nonlinear elliptic PDE with the inverse square potential,
   {\it J. d'Anal. Math. 86,}    359--398 (2002).

\bibitem{DMP} S. Dipierro, L. Montoro, I. Peral, B. Sciunzi, Qualitative properties of
positive solutions to nonlocal critical problems involving the Hardy-Leray
potential, {\it Calc. Var. PDE 55 (4)},  99 (2016).

%&\bibitem{DV}   S. Dipierro, E. Valdinoci, A density property for fractional weighted Sobolev
%spaces, {\it Atti Accad. Naz. Lincei Rend. Lincei Mat. Appl. 26},
%397-422 (2015).

 \bibitem {D}  B. Dyda,  The fractional Hardy inequality with a remainder term,   {\it Colloquium Mathematicum, 122 (1),} 59--67 (2011).

 \bibitem {FM}   M.  Fall and R. Musina,  Sharp nonexistence results for a linear elliptic inequality involving Hardy and Leray potentials, {\it J. Inequal. Appl. 2011 (1),} (2011).

\bibitem {F} M. Fall, Semilinear elliptic equations for the fractional Laplacian with Hardy potential, {\it Nonlinear Anal. 193}. 111311 (2020).

\bibitem{FW} M. Fall and T. Weth,  Nonexistence results for a class of fractional elliptic boundary value problems, {\it J. Funct. Anal. 263},  2205--2227 (2012).

\bibitem {FF} V. Felli and A. Ferrero,  On semilinear elliptic equations with borderline Hardy potentials, {\it J. d'Anal. Math. 123,}   303--340 (2014).


\bibitem {FQ2} P. Felmer and A. Quaas, Fundamental solutions and Liouville type theorems for nonlinear integral operators, {\it Adv.  Math. 226}, 2712--2738 (2011).

\bibitem{FLS-2008}
R. L. Frank, E. H. Lieb, and R. Seiringer, Hardy--Lieb--Thirring  inequalities  for  fractional Schr\"odinger operators, {\it J. Amer. Math. Soc. 21}, no. 4, 925--950 (2008). 

\bibitem{GS}  I. M. Gel'fand  and G. E. \v{S}ilov,
 Generalized functions. Vol. 2, Spaces of fundamental and generalized functions, {\it  Academic Press} (1968).

\bibitem{Robert-Ghoussoub} N. Ghoussoub, F. Robert, S. Shakerian and M. Zhao,
Mass and asymptotics associated to fractional Hardy Schr{\"o}dinger operators in critical regimes, {\it Comm. Part. Diff. Eq. 43}, 859--892 (2018).
 
\bibitem {GV} D. G\'omez-Castro  and  J. V\'azquez,  
 The fractional Schr\" odinger equation with singular potential and measure data, {\it Disc. Cont. Dyn. Syst.  39(12)}, 7113--7139 (2019).

\bibitem {RS} X. Ros-Oton and J. Serra, The Dirichlet problem for the fractional
laplacian: regularity up to the boundary, {\it J. Math. Pures Appl. 101},  275--302 (2014).

\bibitem {RS0} X. Ros-Oton  and J. Serra,
The extremal solution for the fractional Laplacian, {\it
Calc. Var. PDE 50}, 723--750 (2014).

%\bibitem {RS1} X. Ros-Oton and J. Serra, The Pohozaev identity for the fractional Laplacian,
%{\it Arch.   Ration. Mech.   Anal. 213}, 587-628 (2014).

\bibitem {Ls07} L. Silvestre,   Regularity of the obstacle problem for a fractional power of the Laplace operator,  {\it Comm. Pure   Appl. Math. 60 (1),} 67--112 (2007).

\bibitem {V}  J. V\'azquez, Nonlinear diffusion with fractional Laplacian operators, in ``Nonlinear partial differential equations: the Abel Symposium 2010'', Holden, Helge and Karlsen, Kenneth H. eds., Springer, 2012. Pp. 271--298.

\bibitem {W} Y. Wang, Existence and nonexistence of solutions to elliptic equations involving the Hardy potential, {\it
J. Math. Anal. Appl. 456}, 274--292 (2017).

\bibitem {V1}  L. V\'{e}ron, Elliptic equations involving Measures,
 Stationary Partial Differential equations,
{\it Vol. I, 593-712, Handb. Differ. Equ., North-Holland, Amsterdam}
(2004).


\bibitem{Ya} D. Yafaev, Sharp constants in the Hardy-Rellich inequalities. {\it J. Funct. Anal. 168}, 121--144 (1999).


\end{thebibliography}
\end{document}